\newtheorem{example}[theorem]{Example}
\newtheorem{remark}[theorem]{Remark}
\begin{document}

\title{Control Sets for Affine Systems, Spectral Properties and Projective Spaces}
\author{Fritz Colonius\\Institut f\"{u}r Mathematik, Universit\"{a}t Augsburg, Augsburg, Germany
\and Alexandre J. Santana and Juliana Setti\\Departamento de Matem\'{a}tica, Universidade Estadual de Maring\'{a}\\Maring\'{a}, Brazil}
\maketitle

\textbf{Abstract: }For affine control systems with bounded control range the
control sets, i.e., the maximal subsets of complete approximate
controllability, are studied using spectral properties. For hyperbolic systems
there is a unique control set with nonvoid interior and it is bounded. For
nonhyperbolic systems, these control sets are unbounded. In an appropriate
compactification of the state space there is a unique chain control set, and
the relations to the homogenous part of the control system are worked out.

\textbf{Key words. }affine control systems, control sets, boundary at infinity

\textbf{AMS subject classification.} 93B05, 34H05

\section{Introduction}

We study controllability properties for affine control systems of the form
\begin{equation}
\dot{x}(t)=Ax(t)+\sum_{i=1}^{m}u_{i}(t)(B_{i}x(t)+c_{i})+d,\quad u(t)\in
\Omega, \label{affine}%
\end{equation}
where $A,B_{1},\ldots,B_{m}\in\mathbb{R}^{n\times n}$ and $c_{1},\ldots
,c_{m},d\in\mathbb{R}^{n}$. The controls $u=(u_{1},\ldots,u_{m})$ have values
in a bounded set $\Omega\subset\mathbb{R}^{m}$ with $0\in\Omega$. The set of
admissible controls is $\mathcal{U}=\{u\in L^{\infty}(\mathbb{R}%
,\mathbb{R}^{m})\left\vert u(t)\in\Omega\text{ for almost all }t\right.  \}$
or the set $\mathcal{U}_{pc}$ of all piecewise constant functions defined on
$\mathbb{R}$ with values in $\Omega$. We also write (\ref{affine}) as%
\[
\dot{x}(t)=A(u(t))x(t)+Cu(t)+d,\quad u(t)\in\Omega,
\]
with $A(u):=A+\sum_{i=1}^{m}u_{i}B_{i}$ for $u\in\Omega$ and $C:=(c_{1}%
,\ldots,c_{m})$. With the vector fields $f_{0}(x)=Ax+d$ and $f_{i}%
(x)=B_{i}x+c_{i}$ on $\mathbb{R}^{n}$ we assume throughout that the following
accessibility (or Lie algebra) rank condition holds,%
\begin{equation}
\dim\mathcal{LA}(f_{0},f_{1},\ldots,f_{m})(x)=n\text{ for all }x\in
\mathbb{R}^{n}, \label{LARC}%
\end{equation}
where $\mathcal{LA}(f_{0},f_{1},\ldots,f_{m})$ is the set of vector fields in
the Lie algebra generated by $f_{0},f_{1},\ldots,f_{m}$.

Controllability properties of bilinear and affine systems have been studied
since more than fifty years. Early contributions are due to Rink and Mohler
\cite{RinkM68} who took the set of equilibria as a starting point for
establishing results on complete controllability. On the other hand,
Lie-algebraic methods have yielded important insights. A classical result due
to Jurdjevic and Sallet \cite[Theorem 2]{JurS84} (cf. also Do Rocio, Santana,
and Verdi \cite{DSC09}) shows that affine system (\ref{affine}) is
controllable on $\mathbb{R}^{n}$ if it has no fixed points and its homogeneous
part, the bilinear system
\begin{equation}
\dot{x}(t)=A(u(t))x(t),\quad u(t)\in\Omega, \label{hom_n}%
\end{equation}
is controllable on $\mathbb{R}^{n}\setminus\{0\}$ (only the first condition is
necessary). Initially, bilinear control systems were considered as
\textquotedblleft nearly linear\textquotedblright\ (cf. Bruni, Di Pillo, and
Koch \cite{BDK74} containing also many early references). However it has
turned out that characterizing controllability of such systems (even with
unrestricted controls) is a very difficult problem. As Jurdjevic \cite[p.
182]{Jurd97} emphasizes, the controllability properties of affine systems are
substantially richer and may require \textquotedblleft entirely different
geometrical considerations\textquotedblright\ (based on Lie-algebraic
methods). There is a substantial literature on controllability properties of
bilinear and affine control systems. Here we only refer to the monographs
Mohler \cite{Mohler}, Elliott \cite{Elliott}, and Jurdjevic \cite{Jurd97}.
Further references are also included in \cite{ColRS} where we analyze
controllability properties near equilibria.

The present paper concentrates on control sets, i.e., maximal subsets of
approximate controllability; cf. Definition \ref{Definition_control_sets}
(another relaxation of complete controllability is the notion of
\textquotedblleft near controllability\textquotedblright\ studied by Nie
\cite{Nie20}\ for discrete-time bilinear control systems). The key to our
analysis are properties of the interior of the system semigroup and spectral
properties of the homogeneous part (\ref{hom_n}). In the hyperbolic case (cf.
Definition \ref{Definition_hyperbolic}) Theorem \ref{Theorem_hyperbolic1}
shows that an affine control system has a unique control set $D$ with nonvoid
interior. In the uniformly hyperbolic case (cf. Definition
\ref{Definition_uni_hyperbolic}) Theorem \ref{Theorem_hyperbolic2} shows that
$D$ is bounded. Hence these systems enjoy similar controllability properties
as linear control systems of the form $\dot{x}=Ax+Bu$ with hyperbolic matrix
$A$; cf. Colonius and Kliemann \cite[Example 3.2.16]{ColK00}. We remark that a
generalization in another direction is given for control sets of linear
control systems on Lie groups by Ayala and Da Silva \cite{AyaDaS17}.

Nonhyperbolic affine systems may possess several control sets with nonvoid
interior. By Theorem \ref{Theorem_main2} each of them is unbounded. The proof
takes up ideas from Rink and Mohler \cite{RinkM68}, replacing the set of
equilibria by the set of periodic solutions. Then we compactify the state
space using an embedding into projective space $\mathbb{P}^{n}$. This is in
line with the study of the behavior at infinity for ordinary differential
equations based on the Poincar\'{e} sphere; cf. Perko \cite[Section
3.10]{Perko}. Already the special case of linear control systems discussed in
the beginning of Section \ref{Section7} shows that here chain transitivity (a
classical notion in the theory of dynamical systems; cf. Robinson
\cite{Robin98}) plays an important role. Theorem \ref{Theorem_main3} shows
that for a control set with nonvoid interior the \textquotedblleft boundary at
infinity\textquotedblright\ (cf. Definition \ref{Definition_infinity})
intersects a chain control set of the projectivized homogeneous part (here
small jumps in the trajectories are allowed; cf. Definition \ref{intro2:ccs}).
The main result on the nonhyperbolic case is Theorem \ref{Theorem_main5}
showing that there is a single chain control set in $\mathbb{P}^{n}$
containing the images of all control sets $D$ with nonvoid interior in
$\mathbb{R}^{n}$. The boundary at infinity of this chain control set contains
all chain control sets of the projectivized homogeneous part having nonvoid
intersection with the boundary at infinity of one of the control sets $D$.
These results cast new light on the relations between affine systems and their
homogeneous parts and are intuitively appealing, since one may expect that for
unbounded $x$-values the inhomogeneous part $Cu(t)+d$ becomes less relevant.

The contents of the present paper are as follows. In Section \ref{Section2} we
first recall some notation and general properties of nonlinear control systems
and cite results on spectral properties and controllability for homogeneous
bilinear control systems.\ A result on periodic solutions of periodic
inhomogeneous differential equations is stated. Its proof is given in the
appendix. Section \ref{Section4} analyzes the system semigroup of affine
control systems and their homogeneous parts. Section \ref{Section5} shows that
for hyperbolic systems a unique control set exists and that it is bounded.
Section \ref{Section6} prepares the discussion of nonhyperbolic systems by
embedding affine control systems into homogeneous bilinear systems and
associated systems in projective spaces. Section \ref{Section7} describes the
control sets and their boundaries at infinity for nonhyperbolic
systems.\smallskip

\textbf{Notation.} The set of eigenvalues of a matrix $A\in\mathbb{R}^{n\times
n}$ is $\mathrm{spec}(A)$ and the real eigenspace for an eigenvalue $\mu
\in\mathbb{C}$ is $\mathbf{E}(A;\mu)$. In a metric space $X$ with distance $d$
the distance of $x\in X$ to a nonvoid subset $A\subset X$ is $d(x,A)=\inf
\{d(x,a)\left\vert a\in A\right.  \}$.

\section{Preliminaries\label{Section2}}

In Subsection \ref{Subsection2.1} notation and some basic properties of
control systems are recalled. Subsection \ref{Subsection2.2} cites results on
control sets for homogeneous bilinear control systems and Subsection
\ref{Subsection2.3} characterizes periodic solutions of inhomogeneous periodic
linear differential equations.

\subsection{Basic properties of nonlinear control systems\label{Subsection2.1}%
}

In this subsection we introduce some terminology and notation for
control-affine systems including control sets and chain control sets.

We will consider control-affine systems on a smooth (real analytic) manifold
$M$ of the form%
\begin{equation}
\dot{x}(t)=f_{0}(x(t))+\sum\nolimits_{i=1}^{m}u_{i}(t)f_{i}(x(t)),\quad
u\in\mathcal{U}\text{ or }u\in\mathcal{U}_{pc}, \label{2.1}%
\end{equation}
where $f_{0},f_{1},\ldots,f_{m}$ are smooth vector fields on $M$ and the
control range $\Omega\subset\mathbb{R}^{m}$ is bounded with $0\in\Omega$. We
assume that for every initial state $x\in M$ and every control function
$u\in\mathcal{U}$ there exists a unique solution $\varphi(t,x,u),t\in
\mathbb{R}$, with $\varphi(0,x,u)=x$ of (\ref{2.1}) depending continuously on
$x$. For the general theory of nonlinear control systems we refer to Sontag
\cite{Son98} and Jurdjevic \cite{Jurd97}.

The set of points reachable from $x\in M$ and controllable to $x\in M$ up to
time $T>0$ are defined by
\begin{align*}
{\mathcal{O}}_{\leq T}^{+}(x)  &  :=\{y\in M\left\vert \text{there are}\;0\leq
t\leq T\;\text{and}\;u\in\mathcal{U}\;\text{with}\;y=\varphi(t,x,u)\right.
\},\\
{\mathcal{O}}_{\leq T}^{-}(x)  &  :=\{y\in M\left\vert \text{there are}\;0\leq
t\leq T\;\text{and}\;u\in\mathcal{U}\;\text{with}\;x=\varphi(t,y,u)\right.
\},
\end{align*}
resp. Furthermore, the reachable set (or \textquotedblleft positive
orbit\textquotedblright) from $x$ and the set controllable to $x$ (or
\textquotedblleft negative orbit\textquotedblright\ of $x$) are%
\[
\mathcal{O}^{+}(x)=\bigcup\nolimits_{T>0}O_{\leq T}^{+}(x),\quad
\mathcal{O}^{-}(x)=\bigcup\nolimits_{T>0}O_{\leq T}^{-}(x),
\]
resp. The system is called locally accessible in $x$ if $\mathcal{O}_{\leq
T}^{+}(x)$ and $\mathcal{O}_{\leq T}^{-}(x)$ have nonvoid interior for all
$T>0$, and the system is called locally accessible if this holds in every
point $x\in M$. This is equivalent to the following accessibility rank
condition%
\begin{equation}
\dim\mathcal{LA}\left\{  f_{0},f_{1},\ldots,f_{m}\right\}  (x)=\dim M\text{
for all }x\in M; \label{ARC}%
\end{equation}
here $\mathcal{LA}\left\{  f_{0},f_{1},\ldots,f_{m}\right\}  (x)$ is the
subspace of the tangent space $T_{x}M$ corresponding to the vector fields,
evaluated in $x$, in the Lie algebra generated by $f_{0},f_{1},\ldots,f_{m}$.
The sets $\mathcal{O}^{-}(x)$ are the reachable sets of the time reversed
system given by $\dot{x}(t)=-f_{0}(x(t))-\sum\nolimits_{i=1}^{m}u_{i}%
(t)f_{i}(x(t))$. The trajectories for controls in $\mathcal{U}$ can be
uniformly approximated on bounded intervals by trajectories for controls in
$\mathcal{U}_{pc}$.

The following definition introduces sets of complete approximate controllability.

\begin{definition}
\label{Definition_control_sets}A nonvoid set $D\subset M$ is called a control
set of system (\ref{2.1}) if it has the following properties: (i) for all
$x\in D$ there is a control function $u\in\mathcal{U}$ such that
$\varphi(t,x,u)\in D$ for all $t\geq0$, (ii) for all $x\in D$ one has
$D\subset\overline{\mathcal{O}^{+}(x)}$, and (iii) $D$ is maximal with these
properties, that is, if $D^{\prime}\supset D$ satisfies conditions (i) and
(ii), then $D^{\prime}=D$. A control set $D\subset M$ is called an invariant
control set if $\overline{D}=\overline{\mathcal{O}^{+}(x)}$ for all $x\in D$.
All other control sets are called variant.
\end{definition}

We recall some properties of control sets; cf. Colonius and Kliemann
\cite[Chp. 3]{ColK00}.

\begin{remark}
\label{Remark2.2}If the intersection of two control sets is nonvoid, the
maximality property (iii) implies that they coincide. If the system is locally
accessible in all $x\in\overline{D}$, then by \cite[Lemma 3.2.13(i)]{ColK00}
$\overline{D}=\overline{\mathrm{int}(D)}$ and $D=\mathcal{O}^{-}%
(x)\cap\overline{\mathcal{O}^{+}(x)}$ for all $x\in\mathrm{int}(D)$ and
$\mathrm{int}(D)\subset\mathcal{O}^{+}(x)$ for all $x\in D$ (here it suffices
to consider controls in $\mathcal{U}_{pc}$). If local accessibility holds on
$M$, the control sets with nonvoid interior for controls in $\mathcal{U}$
coincide with those defined analogously for controls in $\mathcal{U}_{pc}$.
This is proved using the approximation by trajectories for controls in
$\mathcal{U}_{pc}$.
\end{remark}

\begin{lemma}
\label{Lemma_intersection}Suppose that the system is locally accessible and
there is $x\in\mathrm{int}(\mathcal{O}^{-}(x))\cap\mathrm{int}(\mathcal{O}%
^{+}(x))$. Then $D:=\mathcal{O}^{-}(x)\cap\overline{\mathcal{O}^{+}(x)}$ is a
control set and $x\in\mathrm{int}(D)$.
\end{lemma}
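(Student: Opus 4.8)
The plan is to verify the three defining properties of a control set for $D=\mathcal{O}^{-}(x)\cap\overline{\mathcal{O}^{+}(x)}$ together with the interior claim, relying throughout on transitivity and forward-invariance of reachable sets (if $w\in\mathcal{O}^{+}(x)$ then $\mathcal{O}^{+}(w)\subset\mathcal{O}^{+}(x)$) and on the two interior hypotheses on $x$. First I would dispose of the easy points. Since $\mathrm{int}(\mathcal{O}^{-}(x))\cap\mathrm{int}(\mathcal{O}^{+}(x))$ is open, contains $x$, and is contained in $\mathcal{O}^{-}(x)\cap\overline{\mathcal{O}^{+}(x)}=D$, we get $x\in\mathrm{int}(D)$. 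For property (ii), if $y\in D$ then $y\in\mathcal{O}^{-}(x)$ gives $x\in\mathcal{O}^{+}(y)$, hence $\mathcal{O}^{+}(x)\subset\mathcal{O}^{+}(y)$ and $\overline{\mathcal{O}^{+}(x)}\subset\overline{\mathcal{O}^{+}(y)}$; since $D\subset\overline{\mathcal{O}^{+}(x)}$ this yields $D\subset\overline{\mathcal{O}^{+}(y)}$.

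The heart of the argument is property (i), where for each $y\in D$ a single control must keep the trajectory in $D$ for all $t\geq0$. I would first build a nontrivial controlled periodic orbit through $x$ lying in $D$. From the two interior hypotheses there are neighborhoods $V\subset\mathcal{O}^{+}(x)$ and $W\subset\mathcal{O}^{-}(x)$ of $x$; choosing $p\in(V\cap W)\setminus\{x\}$, the point $x$ reaches $p$ in some positive time $t_{2}$ (positive since $p\neq x$) and $p$ reaches $x$. Concatenating these two controls produces $u^{\ast}$ and $t^{\ast}>0$ with $\varphi(t^{\ast},x,u^{\ast})=x$, and one checks both legs lie in $D$: every point of the loop reaches $x$ (so lies in $\mathcal{O}^{-}(x)$) and lies in $\overline{\mathcal{O}^{+}(x)}$. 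Extending $u^{\ast}$ periodically keeps $\varphi(\cdot,x,u^{\ast})$ in $D$ for all $t\geq0$. For a general $y\in D$, the control $u_{1}$ steering $y$ to $x$ already keeps the trajectory in $D$ on $[0,t_{1}]$: intermediate points reach $x$, and they lie in $\overline{\mathcal{O}^{+}(x)}$ because $y$ can be approximated from within $\mathcal{O}^{+}(x)$ and one invokes continuous dependence on initial conditions. Concatenating $u_{1}$ with the periodic $u^{\ast}$ then gives the required control.

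Finally, for maximality (iii), suppose $D^{\prime}\supset D$ satisfies (i) and (ii) and take $y\in D^{\prime}$. Applying (ii) at $x\in D^{\prime}$ gives $y\in\overline{\mathcal{O}^{+}(x)}$, and applying (ii) at $y$ gives $x\in\overline{\mathcal{O}^{+}(y)}$. Here I would use $x\in\mathrm{int}(\mathcal{O}^{-}(x))$: approximating $x$ by points $y_{n}\in\mathcal{O}^{+}(y)$, for large $n$ these lie in the neighborhood $W\subset\mathcal{O}^{-}(x)$, so the chain $y\to y_{n}\to x$ shows $x\in\mathcal{O}^{+}(y)$, that is $y\in\mathcal{O}^{-}(x)$. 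Hence $y\in D$ and $D^{\prime}=D$.

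I expect property (i) to be the main obstacle. The subtle point is not the endpoints but verifying that entire trajectory segments remain in $D$, and it is precisely there that forward-invariance of $\mathcal{O}^{+}(x)$, continuous dependence on initial data, and the periodic-orbit construction (which upgrades the two interior conditions into a genuine loop of positive length) are indispensable; by contrast, maximality rests on the single step of upgrading approximate reachability of $x$ to exact reachability via $x\in\mathrm{int}(\mathcal{O}^{-}(x))$.
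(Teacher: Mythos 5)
Your proof is correct, but it takes a genuinely different route from the paper's. The paper's own proof is a two-step reduction: the open set $\mathrm{int}(\mathcal{O}^{-}(x))\cap\mathrm{int}(\mathcal{O}^{+}(x))$ satisfies properties (i) and (ii) of Definition \ref{Definition_control_sets}, hence is contained in some control set $D^{\prime}$ with $x\in\mathrm{int}(D^{\prime})$, and then the identification $D^{\prime}=\mathcal{O}^{-}(x)\cap\overline{\mathcal{O}^{+}(x)}$ is taken from Remark \ref{Remark2.2}, i.e.\ ultimately from \cite[Lemma 3.2.13(i)]{ColK00} --- which is exactly where local accessibility enters. You instead verify the three defining properties of a control set for $D=\mathcal{O}^{-}(x)\cap\overline{\mathcal{O}^{+}(x)}$ from scratch, and all the steps are sound: the positive-period loop through a point $p\in(V\cap W)\setminus\{x\}$, concatenated after steering $y\in D$ to $x$, settles property (i); the approximation of $y\in\overline{\mathcal{O}^{+}(x)}$ by points of $\mathcal{O}^{+}(x)$, combined with continuous dependence on initial conditions and transitivity of reachable sets, keeps the segment from $y$ to $x$ inside $\overline{\mathcal{O}^{+}(x)}$; and the maximality step correctly upgrades $x\in\overline{\mathcal{O}^{+}(y)}$ to exact reachability $x\in\mathcal{O}^{+}(y)$ via the neighborhood $W\subset\mathcal{O}^{-}(x)$. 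The paper's route buys brevity and reuse of the established machinery of \cite{ColK00}; your route buys self-containedness and in fact yields a slightly stronger statement, since local accessibility is never invoked --- only the pointwise hypothesis $x\in\mathrm{int}(\mathcal{O}^{-}(x))\cap\mathrm{int}(\mathcal{O}^{+}(x))$ together with the standing assumptions of globally defined unique solutions depending continuously on the initial state. Two minor points worth recording: your loop actually lies in $\mathcal{O}^{+}(x)\cap\mathcal{O}^{-}(x)$ itself, not merely in its closure, and the choice of $p\neq x$ tacitly uses $\dim M\geq1$, which is harmless in the paper's setting.
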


\begin{proof}
\marginpar{10.5.}The set $\mathrm{int}(\mathcal{O}^{-}(x))\cap\mathrm{int}%
(\mathcal{O}^{+}(x))$ satisfies properties (i) and (ii) of control sets, hence
it is contained in a control set $D^{\prime}$ and $x\in\mathrm{int}(D^{\prime
})$. Then $D^{\prime}=\mathcal{O}^{-}(x)\cap\overline{\mathcal{O}^{+}(x)}$ by
Remark \ref{Remark2.2}.
\end{proof}

Next we introduce a notion of controllability allowing for (small) jumps
between pieces of trajectories. Here we fix a metric $d$ on $M$.

\begin{definition}
\label{intro2:defchains}Fix $x,y\in M$ and let $\varepsilon,T>0.$ A controlled
$(\varepsilon,T)$\textit{-chain} $\zeta$ from $x$ to $y$ is given by
$n\in\mathbb{N},\ x_{0}=x,\ldots,x_{n-1},x_{n}=y\in M,\ u_{0},\ldots
,u_{n-1}\in\mathcal{U}$, and $t_{0},\ldots,t_{n-1}\geq T$ with
\[
d(\varphi(t_{j},x_{j},u_{j}),x_{j+1})\leq\varepsilon\text{ }%
\,\text{for\thinspace all}\,\,\,j=0,\ldots,n-1.
\]
If for every $\varepsilon,T>0$ there is a controlled $(\varepsilon,T)$-chain
from $x$ to $y$, then the point $x$ is chain controllable to $y.$
\end{definition}

In analogy to control sets, chain control sets are defined as maximal regions
of chain controllability; cf. \cite[Chapter 4]{ColK00}.

\begin{definition}
\label{intro2:ccs}A nonvoid set $E\subset M$ is called a \textit{chain control
set} of system (\ref{2.1}) if (i) for all $x\in E$ there is $u\in\mathcal{U}$
such that $\varphi$$(t,x,u)\in E$ for all $t\in\mathbb{R}$, (ii) for all
$x,y\in E$ and $\varepsilon,\,T>0$ there is a controlled $(\varepsilon
,T)$-chain from $x$ to $y$, and (iii) $E$ is maximal (with respect to set
inclusion) with these properties.
\end{definition}

Obviously, every equilibrium and every periodic trajectory is contained in a
control set and a chain control set. Since the concatenation of two controlled
$(\varepsilon,T)$-chains again yields a controlled $(\varepsilon,T)$-chain,
two chain control sets coincide if their intersection is nonvoid.

For a continuous dynamical system $\psi:\mathbb{R}\times X\rightarrow X$ on a
metric space $X$ a subset $Y\subset X$ is called chain transitive, if for all
$x,y\in Y$ and all $\varepsilon,T>0$ there is an $(\varepsilon,T)$-chain from
$x$ to $y$ given by $n\in\mathbb{N},\,x_{0}=x,x_{1},\ldots,x_{n}=y\in
X$,$\mathcal{\ }$and $t_{0},\ldots,t_{n-1}\geq T$ with $d(\psi(t_{j}%
,x_{j}),x_{j+1})\leq\varepsilon$ $\,$for\thinspace all$\,\,\,j=0,\ldots,n-1$.

For compact and convex control range $\Omega$, a control system of the form
(\ref{2.1}) defines a continuous dynamical system, the control flow, given by
$\mathbf{\Psi}:\mathbb{R}\times\mathcal{U}\times M\rightarrow\mathcal{U}\times
M,\,(t,u,x)\mapsto(u(t+\cdot),\varphi(t,x,u))$, where $u(t+\cdot
)(s):=u(t+s),\,s\in\mathbb{R}$, and $\mathcal{U}\subset L^{\infty}%
(\mathbb{R},\mathbb{R}^{m})$ considered in a metric for the weak$^{\ast}$
topology is compact; cf. Kawan \cite[Proposition 1.17]{Kawa13}. The following
assertions are shown in \cite[Proposition 1.24]{Kawa13}: chain control sets
are closed and for locally accessible systems every control set with nonvoid
interior is contained in a chain control set. The chain control sets $E$
uniquely correspond to the maximal invariant chain transitive sets
$\mathcal{E}$ of the control flow $\mathbf{\Psi}$ via%
\begin{equation}
\mathcal{E}:=\{(u,x)\in\mathcal{U}\times E\left\vert \varphi(t,x,u)\in E\text{
for all }t\in\mathbb{R}\right.  \}. \label{chain_transitive1}%
\end{equation}

\subsection{Control sets for homogeneous bilinear systems\label{Subsection2.2}%
}

In this subsection we cite several results on control sets and chain control
sets for homogeneous bilinear control systems of the form%
\begin{equation}
\dot{x}(t)=A(u(t))x(t),\quad u(t)\in\Omega,\text{ with }A(u):=A+\sum_{i=1}%
^{m}u_{i}B_{i},u\in\Omega, \label{hom}%
\end{equation}
where $A,B_{1},\ldots,B_{m}\in\mathbb{R}^{n\times n}$ and the controls
$u=(u_{1},\ldots,u_{m})$ have values in a compact convex neighborhood $\Omega$
of the origin in $\mathbb{R}^{m}$. The solutions are denoted by $\varphi
_{\hom}(t,x,u),t\in\mathbb{R}$.

By homogeneity a system of the form (\ref{hom}) induces control systems on the
unit sphere $\mathbb{S}^{n-1}$ and on projective space $\mathbb{P}^{n-1}$. The
projections of $\mathbb{R}^{n}\setminus\{0\}$ to the unit sphere
$\mathbb{S}^{n-1}$ and projective space $\mathbb{P}^{n-1}$ are denoted by
$\pi_{\mathbb{S}}$ and $\pi_{\mathbb{P}}$, resp.

\begin{definition}
\label{Definition_spectra_bilinear}Let $\lambda(u,x)=\lim\sup_{t\rightarrow
\infty}\frac{1}{t}\log\left\Vert \varphi_{\hom}(t,x,u)\right\Vert $ be the
Lyapunov exponent for $(u,x)\in\mathcal{U}\times\left(  \mathbb{R}%
^{n}\setminus\{0\}\right)  $.

(i) The Floquet spectrum of a control set $_{\mathbb{S}}D$ on the unit sphere
$\mathbb{S}^{n-1}$ is
\[
\Sigma_{Fl}(_{\mathbb{S}}D)=\left\{  \lambda(u,x)\left\vert \pi_{\mathbb{S}%
}x\in\mathrm{int}(_{\mathbb{S}}D),\,u\in\mathcal{U}_{pc}\text{ }%
\tau\text{-periodic with }\pi_{\mathbb{S}}\varphi_{\hom}(\tau,x,u)=\pi
_{\mathbb{S}}x\right.  \right\}  .
\]

(ii) The Floquet spectrum of a control set $_{\mathbb{P}}D$ on projective
space $\mathbb{P}^{n-1}$ is
\[
\Sigma_{Fl}(_{\mathbb{P}}D)=\left\{  \lambda(u,x)\left\vert \pi_{\mathbb{P}%
}x\in\mathrm{int}(_{\mathbb{P}}D),\,u\in\mathcal{U}_{pc}\text{ }%
\tau\text{-periodic with }\pi_{\mathbb{P}}\varphi_{\hom}(\tau,x,u)=\pi
_{\mathbb{P}}x\right.  \right\}  .
\]

(iii) The Lyapunov spectrum of a control set $_{\mathbb{P}}D$ on projective
space $\mathbb{P}^{n-1}$ is%
\[
\Sigma_{Ly}(_{\mathbb{P}}D)=\left\{  \lambda(u,x)\left\vert u\in
\mathcal{U}\text{ and }\pi_{\mathbb{P}}\varphi(t,x,u)\in\overline
{_{\mathbb{P}}D}\text{ for all }t\geq0\right.  \right\}  .
\]

\end{definition}

\begin{remark}
\label{Remark_semi}The Floquet spectrum can be characterized using the system
semigroups $_{\mathbb{R}}\mathcal{S}^{\hom}$ and $_{\mathbb{P}}\mathcal{S}%
^{\hom}$ of the systems on $\mathbb{R}^{n}\setminus\{0\}$ and on
$\mathbb{P}^{n-1}$, resp. (cf. Section \ref{Section4} for the definition of
system semigroups). Suppose that the accessibility rank condition in
$\mathbb{P}^{n-1}$ holds. Corollary 7.3.18 in Colonius and Kliemann
\cite{ColK00} implies that the Floquet spectrum of a control set
$_{\mathbb{P}}D$ consists of the numbers $\frac{1}{\tau}\log\left\vert
\rho\right\vert $ where $\rho$ is a real eigenvalue of an element $\Phi
_{u}(\tau,0)\in\,_{\mathbb{R}}\mathcal{S}_{\tau}^{\hom}$ with eigenspace
$\pi_{\mathbb{P}}\left(  \mathbf{E}(\Phi_{u}(\tau,0);\rho)\right)
\subset\mathrm{int}(_{\mathbb{P}}D)$ and such that $\Phi_{u}(\tau,0)$ induces
an element of the system semigroup in $\mathrm{int}(_{\mathbb{P}}%
\mathcal{S}_{\leq\tau+1}^{\hom})$.
\end{remark}

The following theorem analyzes the control sets of the homogeneous system
(\ref{hom}).

\begin{theorem}
\label{Theorem_95}Consider the systems on the unit sphere $\mathbb{S}^{n-1}$
and on projective space $\mathbb{P}^{n-1}$ obtained by projection of the
homogeneous bilinear control system (\ref{hom}). Assume that the accessibility
rank condition on $\mathbb{P}^{n-1}$ is satisfied.

(i) There are $1\leq k_{0}\leq n$ control sets $_{\mathbb{P}}D_{j}$ with
nonvoid interior in $\mathbb{P}^{n-1}$ and exactly one of these control sets
is an invariant control set.

(ii) There are $1\leq k_{1}\leq2k_{0}$ control sets $_{\mathbb{S}}D_{i}$ on
$\mathbb{S}^{n-1}$. In each case, one or two of the control sets
$_{\mathbb{S}}D_{i}$ on $\mathbb{S}^{n-1}$ project to a single control set
$_{\mathbb{P}}D_{j}$ on $\mathbb{P}^{n-1}$ and then $\Sigma_{Fl}(_{\mathbb{S}%
}D_{i})=\Sigma_{Fl}(_{\mathbb{P}}D_{j})$ and $\Sigma_{Ly}(_{\mathbb{S}}%
D_{i})=\Sigma_{Ly}(_{\mathbb{P}}D_{j})$.

(iii) If $0$ is in the interior of the Floquet spectrum $\Sigma_{Fl}%
(_{\mathbb{S}}D_{i})$ the cone%
\[
_{\mathbb{R}}D_{i}:=\{\alpha x\in\mathbb{R}^{n}\left\vert \alpha>0\text{ and
}x\in\,_{\mathbb{S}}D_{i}\right.  \}
\]
generated by $_{\mathbb{S}}D_{i}$ is a control set with nonvoid interior in
$\mathbb{R}^{n}\setminus\{0\}$. Conversely, if $_{\mathbb{R}}D$ is control set
with nonvoid interior in $\mathbb{R}^{n}\setminus\{0\}$, then $0\in
\overline{\Sigma_{Ly}(_{\mathbb{P}}D)}$, where $_{\mathbb{P}}D\supset
\pi_{\mathbb{P}}(_{\mathbb{R}}D)$.
\end{theorem}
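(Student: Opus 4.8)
The plan is to reduce the three assertions to the structure theory of \cite{ColK00} for bilinear flows on $\mathbb{P}^{n-1}$ and $\mathbb{S}^{n-1}$, combined with the radial--angular (skew-product) decomposition of the flow on $\mathbb{R}^{n}\setminus\{0\}$. For (i) I would first record that, under the accessibility rank condition, the projected system on the compact manifold $\mathbb{P}^{n-1}$ is locally accessible, so its control sets with nonvoid interior are finitely many and linearly ordered by the reachability relation $_{\mathbb{P}}D_{i}\preceq{}_{\mathbb{P}}D_{j}$ (a point of the first can be steered into the second). The bound $k_{0}\leq n$ follows from the link between this order and the Selgrade decomposition of the projectivized linear flow into at most $n$ invariant subbundles: each control set sits over one Morse/chain component, and there are at most $n$ of these. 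Existence of an invariant control set is automatic on a compact manifold, and uniqueness comes from the order being linear with a unique maximal element whose closure equals $\overline{\mathcal{O}^{+}(x)}$.

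For (ii) the two-to-one covering $p\colon\mathbb{S}^{n-1}\rightarrow\mathbb{P}^{n-1}$, $x\mapsto\pi_{\mathbb{P}}x$, is a local diffeomorphism intertwining the two induced systems, so it maps control sets to control sets and respects reachability. Over a fixed $_{\mathbb{P}}D_{j}$ the preimage $p^{-1}(_{\mathbb{P}}D_{j})$ is invariant under the antipodal map; it is a single control set when some admissible trajectory joins a point $x$ to $-x$ (the two sheets merge) and two antipodal control sets otherwise, which gives $k_{1}\leq 2k_{0}$. Finally, since $\lambda(u,x)$ depends only on $\Vert\varphi_{\hom}(t,x,u)\Vert$ and $\Vert\varphi_{\hom}(t,-x,u)\Vert=\Vert\varphi_{\hom}(t,x,u)\Vert$, and since periodic orbits with their real eigendata correspond bijectively under $p$, both the Floquet and the Lyapunov spectra are unchanged, proving $\Sigma_{Fl}(_{\mathbb{S}}D_{i})=\Sigma_{Fl}(_{\mathbb{P}}D_{j})$ and $\Sigma_{Ly}(_{\mathbb{S}}D_{i})=\Sigma_{Ly}(_{\mathbb{P}}D_{j})$.

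For (iii), writing $x=rs$ with $r=\Vert x\Vert>0$ and $s\in\mathbb{S}^{n-1}$, the flow splits into the angular system on $\mathbb{S}^{n-1}$ and the radial equation $\frac{d}{dt}\log r=\langle s,A(u)s\rangle=:q(s,u)$, so that the Lyapunov exponent is the time average of $q$ along the angular trajectory and the Floquet exponents are exactly these averages over closed angular loops in $\mathrm{int}(_{\mathbb{S}}D_{i})$. If $0\in\mathrm{int}(\Sigma_{Fl}(_{\mathbb{S}}D_{i}))$, then there are periodic controls producing closed angular loops with strictly positive and with strictly negative average radial rate; concatenating such loops, while remaining over $\mathrm{int}(_{\mathbb{S}}D_{i})$, yields approximate controllability in the radial direction as well. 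Together with the angular controllability inside $_{\mathbb{S}}D_{i}$ and local accessibility, this shows the cone $_{\mathbb{R}}D_{i}$ is a control set with nonvoid interior. For the converse, approximate controllability inside a control set $_{\mathbb{R}}D$ lets one return arbitrarily close to a starting point of $\mathrm{int}(_{\mathbb{R}}D)$ over long times, forcing the average of $q$ to be nearly $0$ while the projected trajectory stays near $_{\mathbb{P}}D$; passing to the limit gives $0\in\overline{\Sigma_{Ly}(_{\mathbb{P}}D)}$.

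I expect the delicate step to be part (iii): bridging the gap between controllability, a finite-time property, and the Floquet exponents, which are asymptotic averages over periodic orbits. The point is that an exponent in the \emph{interior} of the spectrum must be realized by a genuine periodic solution whose eigenspace lies in the interior of the control set and which induces an element of the interior of the system semigroup; this is precisely the semigroup characterization recalled in Remark \ref{Remark_semi}, and it is what licenses the concatenation argument used to obtain radial controllability.
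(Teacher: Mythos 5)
Your sketches of (ii) and of the forward half of (iii) capture the intended mechanisms (the antipodal double covering; radial steering via closed angular loops realizing exponents of both signs), and it is worth noting that the paper itself proves none of this: its proof of Theorem \ref{Theorem_95} consists of citations to \cite[Theorem 7.1.1]{ColK00} for (i), \cite[Theorem 3.15]{ColRS} for (ii), and \cite[Proposition 3.18]{ColRS} for (iii). However, two of your arguments contain genuine gaps. The first is the count $k_{0}\leq n$ in (i). You assign to each control set with nonvoid interior the Selgrade/Morse chain component over which it sits and use that there are at most $n$ such components. That map is not injective: a single chain control set may contain several control sets with nonvoid interior. This is exactly why the paper, in Subsection \ref{Subsection2.2}, only asserts $1\leq l\leq k_{0}$ for the number $l$ of chain control sets, and strict inequality $l<k_{0}$ does occur (classically, for the controlled damped oscillator on $\mathbb{P}^{1}$, the two control sets merge into one chain control set when their spectral intervals overlap). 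So the Selgrade bound controls $l$, not $k_{0}$. The mechanism actually behind \cite[Theorem 7.1.1]{ColK00} is the one reflected in Proposition \ref{Proposition2.5}: one fixes a single $g\in\mathrm{int}(\mathcal{S})$ of the projective system and shows that every control set with nonvoid interior contains in its interior a fixed point of $g$ on $\mathbb{P}^{n-1}$, i.e.\ a projected real eigendirection of $g$; since $g\in GL(n,\mathbb{R})$ has at most $n$ distinct eigenvalues and distinct control sets are disjoint, $k_{0}\leq n$. The linear order you invoked for uniqueness of the invariant control set also comes from this eigenspace structure, not from local accessibility alone.

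The second gap is the converse in (iii). Approximate returns to $x\in\mathrm{int}(_{\mathbb{R}}D)$ need not occur \emph{over long times} (nothing excludes bounded return times), and even granting long times, a finite trajectory from $x$ to near $x$ is not an element of $\Sigma_{Ly}(_{\mathbb{P}}D)$: that spectrum requires an \emph{infinite} trajectory whose projection stays in $\overline{_{\mathbb{P}}D}$ for all $t\geq0$. Moreover, the trajectories realizing approximate controllability in $_{\mathbb{R}}D$ need not stay in $_{\mathbb{R}}D$ at all, since the rank condition is assumed only on $\mathbb{P}^{n-1}$, not on $\mathbb{R}^{n}\setminus\{0\}$, so "the projected trajectory stays near $_{\mathbb{P}}D$" is unjustified. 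A correct argument needs three ingredients you omit: (a) by homogeneity, a control set with nonvoid interior in $\mathbb{R}^{n}\setminus\{0\}$ is a cone, so one may steer approximately from $x$ to $e^{k}x$ and back to $x$; (b) since $\left\vert \frac{d}{dt}\log\left\Vert \varphi_{\hom}(t,x,u)\right\Vert \right\vert \leq\max_{u\in\Omega}\left\Vert A(u)\right\Vert =:M$, such a round trip takes time at least $2k/M$, which is what forces the times to infinity; (c) the projected loop can then be closed exactly inside $\mathrm{int}(_{\mathbb{P}}D)$ in bounded time (exact controllability there \emph{does} hold, by the rank condition on $\mathbb{P}^{n-1}$), and a projectively periodic orbit through a point of $\mathrm{int}(_{\mathbb{P}}D)$ lies entirely in $_{\mathbb{P}}D$ by maximality. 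The resulting Floquet exponents are of order $O(1)\cdot M/(2k)\rightarrow0$, which gives $0\in\overline{\Sigma_{Fl}(_{\mathbb{P}}D)}\subset\overline{\Sigma_{Ly}(_{\mathbb{P}}D)}$; without (a)--(c) your "passing to the limit" has nothing to pass to.
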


\begin{proof}
For (i) see \cite[Theorem 7.1.1]{ColK00}. Assertion (ii) follows from
Colonius, Santana, and Setti \cite[Theorem 3.15]{ColRS} and the observation
that the equality of the Lyapunov spectra is obvious. (iii) follows from
\cite[Proposition 3.18]{ColRS} and (ii).
\end{proof}

We turn to the chain control sets in projective space. By \cite[Theorem
7.1.2]{ColK00} every chain control set $_{\mathbb{P}}E_{j}$ contains a control
set $_{\mathbb{P}}D_{i}$ with nonvoid interior, hence the number $l$ of chain
control sets satisfies $1\leq l\leq k_{0}$. Furthermore, \cite[Theorem
7.3.16]{ColK00} shows that for every chain control set $_{\mathbb{P}}E_{j}$ in
$\mathbb{P}^{n-1}$ and every $u\in\mathcal{U}$%
\[
\{x\in\mathbb{R}^{n}\left\vert x\not =0\text{ implies }\pi_{\mathbb{P}}%
\varphi_{\hom}(t,x,u)\in\,_{\mathbb{P}}E_{j}\text{ for all }t\in
\mathbb{R}\right.  \}
\]
is a linear subspace and its dimension is independent of $u\in\mathcal{U}$. By
(\ref{chain_transitive1}) the chain control sets $_{\mathbb{P}}E_{j}$ uniquely
correspond to the maximal chain transitive subsets $_{\mathbb{P}}%
\mathcal{E}_{j}$ of the control flow on $\mathcal{U}\times\mathbb{P}^{n-1}$
via%
\begin{equation}
_{\mathbb{P}}\mathcal{E}_{j}:=\{(u,\pi_{\mathbb{P}}x)\in\mathcal{U}%
\times\mathbb{P}^{n-1}\left\vert \pi_{\mathbb{P}}\varphi_{\hom}(t,x,u)\in
\,_{\mathbb{P}}E_{j}\text{ for all }t\in\mathbb{R}\right.  \}.
\label{chain_transitive2}%
\end{equation}

\subsection{Periodic solutions\label{Subsection2.3}}

We state some facts on periodic solutions of inhomogeneous periodic
differential equations of the form%
\begin{equation}
\dot{x}(t)=P(t)x(t)+z(t), \label{periodic2}%
\end{equation}
where $P(\cdot)\in L^{\infty}(\mathbb{R},\mathbb{R}^{n\times n})$ and
$z(\cdot)\in L^{\infty}(\mathbb{R},\mathbb{R}^{n})$ are $\tau$-periodic, i.e.,
$P(t+\tau)=P(t)$ and $z(t+\tau)=z(t)$ for almost all $t\in\mathbb{R}$. The
principal fundamental solution $\Phi(t,s)\in\mathbb{R}^{n\times n}%
,t,s\in\mathbb{R}$, is given by%
\[
\frac{d}{dt}\Phi(t,s)=P(t)\Phi(t,s)\text{ with }\Phi(s,s)=I.
\]
The homogeneous equation with $z(t)\equiv0$ has nontrivial (non-unique) $\tau
$-periodic solutions if and only if $1$ is an eigenvalue of $\Phi(\tau,0)$.
Here and in the following, uniqueness of a periodic solution means that it is
unique up to time shifts.

The Floquet multipliers $\rho_{j}\in\mathbb{C},j=1,\ldots,n$, are defined as
the eigenvalues of $\Phi(\tau,0)$ and the Floquet exponents are $\lambda
_{j}:=\frac{1}{\tau}\log\left\vert \rho_{j}\right\vert $. \marginpar{10.5.}%
They coincide with the Lyapunov exponents; cf. Chicone \cite[Proposition
2.61]{Chic99}, Colonius and Kliemann \cite[Theorem 7.2.9]{ColK14}. In
particular, $0$ is a Floquet exponent if $1$ is a Floquet multiplier. We also
refer to Chicone \cite[Section 2.4]{Chic99} and Teschl \cite[Section 3.6]{Tes}
for background on Floquet theory (note that the Floquet exponents as defined
above are the real parts of the Floquet exponents defined in \cite{Chic99} and
\cite{Tes}).

We need the following results on periodic solutions.

\begin{proposition}
\label{Proposition_periodicODE}Consider the $\tau$-periodic differential
equation (\ref{periodic2}).

(i) There is a unique $\tau$-periodic solution if and only if $1\not \in
\mathrm{spec}(\Phi(\tau,0))$. Its initial value (at time $0$) is
$x^{0}=(I-\Phi(\tau,0))^{-1}\int_{0}^{\tau}\Phi(\tau,s)z(s)ds$.

(ii) If for $z(t)\not \equiv 0$ there does not exist a $\tau$-periodic
solution, then the principal fundamental solution satisfies $1\in
\mathrm{spec}(\Phi(\tau,0))$ and $\int_{0}^{\tau}\Phi(\tau,s)z(s)ds\not \in
\operatorname{Im}(I-\Phi(\tau,0))$.

(iii) For $k=0,1,\ldots$, let $P^{k}(\cdot)$ and $z^{k}(\cdot)$ be $\tau_{k}%
$-periodic and suppose that for a $c>0$ the norms in $L^{\infty}(\left[
0,\tau_{0}+1\right]  ;\mathbb{R}^{n})$ satisfy $\left\Vert P^{k}%
(\cdot)\right\Vert _{\infty},\allowbreak\left\Vert z^{k}(\cdot)\right\Vert
_{\infty}\leq c$ for all $k$. Assume that $\tau_{k}\rightarrow\tau_{0}%
,~P^{k}(\cdot)\rightarrow P^{0}(\cdot)$ in $L^{1}([0,\tau_{0}+1],\mathbb{R}%
^{n\times n})$, and $z^{k}(\cdot)\rightarrow z^{0}(\cdot)$ in $L^{1}(\left[
0,\tau_{0}+1\right]  ;\mathbb{R}^{n})$ for $k\rightarrow\infty$. Then for
$k\rightarrow\infty$ the corresponding principal fundamental matrices
$\Phi^{k}(t,s)$ converge to $\Phi^{0}(t,s)$ uniformly in $t,s\in\lbrack
0,\tau_{0}+1]$.

(iv) In the situation of (iii) assume additionally that for $k=0,1,\ldots$ the
corresponding principal fundamental solutions $\Phi^{k}(t,s)$ satisfy
$1\not \in \mathrm{spec}(\Phi^{k}(\tau_{k},0))$. Then the initial values
$x^{k}$ of the corresponding unique $\tau_{k}$-periodic solutions converge for
$k\rightarrow\infty$ to the initial value $x^{0}$ of the unique $\tau_{0}%
$-periodic solution for $P^{0}(\cdot)$ and $z^{0}(\cdot)$.
\end{proposition}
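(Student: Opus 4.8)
The plan is to treat (i)--(ii) as linear-algebra consequences of the variation-of-constants formula and (iii)--(iv) as continuous-dependence results resting on Gronwall's inequality. For (i) I would start from $x(t)=\Phi(t,0)x(0)+\int_0^t\Phi(t,s)z(s)\,ds$. Since $P$ and $z$ are $\tau$-periodic, a solution $x$ is $\tau$-periodic if and only if $x(\tau)=x(0)$: indeed, if $x(\tau)=x(0)$ then $t\mapsto x(t+\tau)$ solves the same equation with the same initial value, so by uniqueness it coincides with $x$. Evaluating the formula at $t=\tau$ and imposing $x(\tau)=x(0)$ yields the linear equation $(I-\Phi(\tau,0))x(0)=\int_0^\tau\Phi(\tau,s)z(s)\,ds$, which has a unique solution precisely when $I-\Phi(\tau,0)$ is invertible, that is, when $1\notin\mathrm{spec}(\Phi(\tau,0))$; this gives both the equivalence and the stated formula for $x^0$. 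Part (ii) is the contrapositive together with the solvability criterion for linear systems: if no $\tau$-periodic solution exists, the above linear equation is unsolvable, which forces $I-\Phi(\tau,0)$ to be singular (so $1\in\mathrm{spec}(\Phi(\tau,0))$) and $\int_0^\tau\Phi(\tau,s)z(s)\,ds\notin\operatorname{Im}(I-\Phi(\tau,0))$.

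For (iii) I would write the fundamental matrices as solutions of the integral equations $\Phi^k(t,s)=I+\int_s^t P^k(r)\Phi^k(r,s)\,dr$. A first application of Gronwall's inequality, using the uniform bound $\|P^k\|_\infty\le c$, yields a uniform bound $\|\Phi^k(t,s)\|\le M:=e^{c(\tau_0+1)}$ for $t,s\in[0,\tau_0+1]$ and all $k$ (including $k=0$). Subtracting the equations for $\Phi^k$ and $\Phi^0$ and inserting and subtracting the cross term $P^k\Phi^0$ gives $\|\Phi^k(t,s)-\Phi^0(t,s)\|\le c\int_s^t\|\Phi^k(r,s)-\Phi^0(r,s)\|\,dr+M\,\|P^k-P^0\|_{L^1([0,\tau_0+1])}$, where the inhomogeneous term is independent of $t$ and $s$. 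A second application of Gronwall then bounds the left-hand side by $M\,\|P^k-P^0\|_{L^1}\,e^{c(\tau_0+1)}$, which tends to $0$ uniformly in $t,s$ since $P^k\to P^0$ in $L^1$.

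For (iv) the formula from (i) gives $x^k=(I-\Phi^k(\tau_k,0))^{-1}\int_0^{\tau_k}\Phi^k(\tau_k,s)z^k(s)\,ds$, and I would show that each factor converges. First, $\Phi^k(\tau_k,0)\to\Phi^0(\tau_0,0)$ by splitting the difference into $[\Phi^k(\tau_k,0)-\Phi^0(\tau_k,0)]+[\Phi^0(\tau_k,0)-\Phi^0(\tau_0,0)]$, the first term handled by the uniform convergence of (iii) and the second by continuity of $\Phi^0(\cdot,0)$ together with $\tau_k\to\tau_0$. Since $1\notin\mathrm{spec}(\Phi^0(\tau_0,0))$, continuity of matrix inversion on the invertible matrices then gives $(I-\Phi^k(\tau_k,0))^{-1}\to(I-\Phi^0(\tau_0,0))^{-1}$. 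For the integral factor I would establish $\Phi^k(\tau_k,s)\to\Phi^0(\tau_0,s)$ uniformly in $s$ (using the cocycle identity $\Phi^0(\tau_k,s)=\Phi^0(\tau_k,\tau_0)\Phi^0(\tau_0,s)$ together with $\Phi^0(\tau_k,\tau_0)\to I$), and combine this with $\|z^k\|_\infty\le c$, $z^k\to z^0$ in $L^1$, and $|\tau_k-\tau_0|\to0$ to pass to the limit in the integral; multiplying the two convergent factors yields $x^k\to x^0$. I expect the main obstacle to be in (iv): keeping track of the moving time $\tau_k$ simultaneously in the evaluation point of $\Phi^k$ and in the upper limit of integration, alongside the $k$-dependence of the data, which is exactly why the cocycle property and the uniform (rather than merely pointwise) convergence from (iii) are essential.
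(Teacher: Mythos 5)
Your proposal is correct and follows essentially the same route as the paper: variation of constants for (i)--(ii), a double application of Gronwall's inequality for (iii) (you insert the cross term $P^k\Phi^0$ where the paper uses $P^0\Phi^k$, and you use the constant kernel $c$ rather than the $L^1$ kernel $\Vert P^0(\cdot)\Vert$ -- a purely cosmetic difference), and for (iv) the same splitting of the integral term with the tail over $[\tau_0,\tau_k]$ controlled by the uniform bounds. If anything, your write-up is slightly more complete than the paper's, which leaves the details of (i)--(ii) and the convergence of the inverses $(I-\Phi^k(\tau_k,0))^{-1}$ implicit, while you address both explicitly.
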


\begin{proof}
see Appendix.
\end{proof}

\section{System semigroups of affine systems in $\mathbb{R}^{n}$%
\label{Section4}}

In this section we analyze system semigroups for affine systems of the form
(\ref{affine}); cf. also Jurdjevic and Sallet \cite{JurS84}.

We start with the following general remarks on the relevant Lie group which is
the semidirect product $G=\mathbb{R}^{n}\ltimes GL(\mathbb{R}^{n})$ with
product given by%
\[
(v,g)\cdot(w,h)=(v+gw,gh).
\]
Its Lie algebra is given by the semidirect product $\mathfrak{g}%
=\mathbb{R}^{n}\ltimes\mathfrak{gl}(\mathbb{R}^{n})$, where the Lie bracket is%
\[
\lbrack(a,A),(b,B)]=(Ab-Ba,AB-BA).
\]
If we consider $X=(a,A),\,Y=(b,B)$ as vector fields on $\mathbb{R}^{n}$
through the relation $X(x)=Ax+a,\,Y(x)=Bx+b$, the Lie bracket is%
\[
\lbrack X,Y](x)=-(AB-BA)x-(Ab-Ba).
\]
If we denote by $e=(0,I)\in G$ the identity element, then the tangent space is
$T_{e}G=\mathfrak{g}$ and an element $X\in\mathfrak{g}$ can be identified with
a right-invariant vector field, a smooth vector field on $G$, given by
$X^{R}(g):=(dR_{g})_{e}X$, where $R_{g}$ stands for the right-translation and
$(dR_{g})_{e}$ is its differential at the identity element. By standard
results, the vector fields $X^{R}$ are complete and their flows satisfy%
\begin{equation}
\phi_{t}^{X^{R}}(g)=R_{g}(\phi_{t}^{X^{R}}(e))\text{ for all }g\in G.
\label{R}%
\end{equation}
Note also that%
\[
(\exp tX)(x)=e^{tA}x+\int_{0}^{t}e^{(t-s)A}a\,ds\text{ for }x\in\mathbb{R}%
^{n}\text{ and }X=(a,A),
\]
gives us exactly the expression for the exponential map for $G=\mathbb{R}%
^{n}\ltimes GL(\mathbb{R}^{n})$ and $\mathfrak{g}=\mathbb{R}^{n}%
\ltimes\mathfrak{gl}(\mathbb{R}^{n})$, meaning that the Lie group exponential
is given by%
\[
\exp tX=\left(  \int_{0}^{t}e^{(t-s)A}a\,ds,e^{tA}\right)  \text{ for
}X=(a,A).
\]
If $\mathcal{F}\subset\mathfrak{g}$ is a nonempty subset, consider the
subgroups of $G$%
\begin{align*}
\mathcal{G}(\mathcal{F})  &  :=\left\{  \exp t_{1}X_{1}\cdot\exp t_{2}%
X_{2}\cdots\exp t_{k}X_{k}\left\vert t_{i}\in\mathbb{R}\text{ and }X_{i}%
\in\mathcal{F}\right.  \right\}  ,\\
\mathcal{G}^{R}(\mathcal{F})  &  :=\{(\phi_{t_{1}}^{X_{1}^{R}}\circ\phi
_{t_{2}}^{X_{2}^{R}}\circ\cdots\circ\phi_{t_{k}}^{X_{k}^{R}})(e)\left\vert
t_{i}\in\mathbb{R}\text{ and }X_{i}\in\mathcal{F}\right.  \},
\end{align*}
and the semigroups $\mathcal{S}(\mathcal{F}),\,\mathcal{S}^{R}(\mathcal{F})$
where only $t_{i}>0$ are allowed. Thus $\mathcal{S}^{R}(\mathcal{F})$ is the
set of points on $G$ that can be attained from $e\in G$ by concatenations of
the flows of $\mathcal{F}^{R}=\{X^{R}\left\vert X\in\mathcal{F}\right.  \}$.

\begin{proposition}
\label{Proposition_semigroups}If $\mathcal{F}\subset\mathfrak{g}$ is a
nonempty subset, the groups $\mathcal{G}(\mathcal{F})$ and $\mathcal{G}%
^{R}(\mathcal{F})$ as well as the semigroups $\mathcal{S}(\mathcal{F})$ and
$\mathcal{S}^{R}(\mathcal{F})$, resp., coincide.
\end{proposition}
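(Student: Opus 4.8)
The plan is to reduce the entire statement to the single identity $\phi_t^{X^R}(e)=\exp(tX)$ for $X\in\mathcal{F}$, after which the coincidence of both the groups and the semigroups becomes a direct computation with the concatenated flows. The point is that once the flow of a right-invariant field through the identity is identified with the one-parameter subgroup, relation (\ref{R}) turns each flow into a left translation, and composing these left translations reproduces exactly the ordered products defining $\mathcal{G}(\mathcal{F})$ and $\mathcal{S}(\mathcal{F})$.

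First I would establish that the integral curve of $X^R$ through $e$ is precisely $t\mapsto\exp(tX)$. Differentiating the one-parameter subgroup and using $\exp((t+s)X)=\exp(sX)\exp(tX)=R_{\exp(tX)}(\exp(sX))$ gives $\tfrac{d}{dt}\exp(tX)=(dR_{\exp(tX)})_e X=X^R(\exp(tX))$, so $t\mapsto\exp(tX)$ solves the defining ODE of $X^R$ with value $e$ at $t=0$. Since the fields $X^R$ are complete, uniqueness of integral curves yields $\phi_t^{X^R}(e)=\exp(tX)$. Combined with (\ref{R}), the flow of $X^R$ then acts by left translation: $\phi_t^{X^R}(g)=R_g(\phi_t^{X^R}(e))=R_g(\exp(tX))=\exp(tX)\cdot g$.

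Next I would evaluate a general concatenation by induction on the number of factors, starting from the innermost flow applied at $e$, to obtain
\[
(\phi_{t_1}^{X_1^R}\circ\phi_{t_2}^{X_2^R}\circ\cdots\circ\phi_{t_k}^{X_k^R})(e)=\exp(t_1X_1)\exp(t_2X_2)\cdots\exp(t_kX_k),
\]
with the parameters occurring in the same order on both sides. Letting the $t_i$ range over $\mathbb{R}$ shows that $\mathcal{G}^R(\mathcal{F})$ consists exactly of the products defining $\mathcal{G}(\mathcal{F})$, hence $\mathcal{G}^R(\mathcal{F})=\mathcal{G}(\mathcal{F})$; restricting to $t_i>0$ gives $\mathcal{S}^R(\mathcal{F})=\mathcal{S}(\mathcal{F})$ by the identical argument.

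The only genuinely delicate point is the first step: pinning down the left/right convention so that the flow of the \emph{right}-invariant field turns out to be left translation by $\exp(tX)$, and consequently that the concatenation reproduces the defining products in the correct order rather than the reversed one. Everything after that is a bookkeeping induction with no analytic content.
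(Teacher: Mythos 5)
Your proposal is correct and takes essentially the same route as the paper's proof: both arguments rest on identifying the flow of $X^{R}$ through $e$ with $t\mapsto\exp(tX)$ and then using relation (\ref{R}) to unwind a concatenation of flows into an ordered product of exponentials, your only addition being that you derive $\phi_{t}^{X^{R}}(e)=\exp(tX)$ from uniqueness of integral curves where the paper simply takes it as the characterization of $\exp$. Incidentally, your order bookkeeping is the more careful one: with the standard convention $R_{g}(h)=hg$ the flows are left translations, so $(\phi_{t_{1}}^{X_{1}^{R}}\circ\cdots\circ\phi_{t_{k}}^{X_{k}^{R}})(e)=\exp(t_{1}X_{1})\cdots\exp(t_{k}X_{k})$ exactly as you state, whereas the paper's final displayed identity has the composition in the reversed order $\phi_{t_{k}}^{X_{k}^{R}}\circ\cdots\circ\phi_{t_{1}}^{X_{1}^{R}}$ --- a harmless slip (or a nonstandard reading of $R_{g}$), since reversing the tuples $(t_{i},X_{i})$ is a bijection and the asserted equality of the sets is unaffected.
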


\begin{proof}
Since $X^{R}(e)=X$, the exponential map $\exp:\mathfrak{g}\rightarrow G$ is
defined by $\exp X=\phi_{1}^{X^{R}}(g)$. Consequently we get for all $t_{i}%
\in\mathbb{R}$ and $X_{i}\in\mathcal{F}\subset\mathfrak{g}$, using (\ref{R}),%
\begin{align*}
&  \exp t_{1}X_{1}\cdot\exp t_{2}X_{2}\cdots\exp t_{k}X_{k}\\
&  =R_{e^{t_{1}X_{1}}e^{t_{2}X_{2}}\cdots e^{t_{k-1}X_{k-1}}}(e^{t_{k}X_{k}%
})=R_{e^{t_{1}X_{1}}e^{t_{2}X_{2}}\cdots e^{t_{k-1}X_{k-1}}}(\phi_{t_{k}%
}^{X_{k}^{R}}(e))\\
&  =\phi_{t_{k}}^{X_{k}^{R}}(e^{t_{1}X_{1}}e^{t_{2}X_{2}}\cdots e^{t_{k-1}%
X_{k-1}})=\phi_{t_{k}}^{X_{k}^{R}}\left(  R_{e^{t_{1}X_{1}}\cdots
e^{t_{k-2}X_{k-2}}}(e^{t_{k-1}X_{k-1}})\right) \\
&  =\phi_{t_{k}}^{X_{k}^{R}}\left(  R_{e^{t_{1}X_{1}}e^{t_{2}X_{2}}\cdots
e^{t_{k-2}X_{k-2}}}(\phi_{t_{k-1}}^{X_{k-1}^{R}}(e))\right)  =\phi_{t_{k}%
}^{X_{k}^{R}}\circ\phi_{t_{k-1}}^{X_{k-1}^{R}}(e^{t_{1}X_{1}}\cdots
e^{t_{k-2}X_{k-2}})\\
&  =\,\cdots\,=\left(  \phi_{t_{k}}^{X_{k}^{R}}\circ\phi_{t_{k-1}}%
^{X_{k-1}^{R}}\circ\cdots\circ\phi_{t_{1}}^{X_{1}^{R}}\right)  (e).
\end{align*}
This implies the assertion.
\end{proof}

The family of affine vector fields on $\mathbb{R}^{n}$ associated with
(\ref{affine}) is given by%
\begin{equation}
\mathcal{F}=\left\{  X^{u}(x)=A(u)x+Cu+d\left\vert u\in\Omega\right.
\right\}  . \label{F}%
\end{equation}
Then $\mathcal{LA}(\mathcal{F})=\mathcal{LA}(f_{0},f_{1},\ldots,f_{m})$, cf.
(\ref{LARC}). The system group $\mathcal{G}=\mathcal{G}(\mathcal{F})$ is a
subgroup of the semidirect product $\mathbb{R}^{n}\ltimes GL(\mathbb{R}^{n})$.

Since we assume the accessibility rank condition (\ref{LARC}), Jurdjevic
\cite[Theorem 3 on p. 44]{Jurd97} implies that the set $\mathcal{F}$ of vector
fields is transitive on $\mathbb{R}^{n}$, i.e., for all $x,y\in\mathbb{R}^{n}$
there is $g\in\mathcal{G}$ with $y=gx$. Denote by $\mathcal{S}_{\tau
}=\mathcal{S}_{\tau}(\mathcal{F})$ the set of those elements of $\mathcal{S}%
(\mathcal{F})$ with $t_{1}+\cdots+t_{k}=\tau$, and let $\mathcal{S}_{\leq
T}\mathcal{=}\bigcup_{\tau\in\lbrack0,T]}\mathcal{S}_{\tau}$, analogously for
$\mathcal{S}_{\tau}(\mathcal{F}^{R})$. The trajectories for $u\in
\mathcal{U}_{pc}$ of control system (\ref{affine}) are given by the action of
the semigroup $\mathcal{S}$ on $\mathbb{R}^{n}$: For $g\in\mathcal{S}_{\tau}$
and $\tau=t_{k}+\cdots+t_{1},t_{i}>0$,%
\[
gx=g(u)x=\exp(t_{k}X^{u^{k}})\cdots\exp(t_{1}X^{u^{1}})x=\varphi(\tau,x,u),
\]
where $u^{j}\in\Omega,X^{u^{j}}\in\mathcal{F}$, and $\varphi(t,x,u),t\in
\lbrack0,\tau]$, is the solution of (\ref{affine}) with piecewise constant
control $u$ defined, with $t_{0}=0$, by%
\begin{equation}
u(t):=u^{j+1}\text{ for }t\in\left[  \sum\nolimits_{i=0}^{j}t_{i}%
,\sum\nolimits_{i=0}^{j+1}t_{i}\right)  ,\quad j=0,\ldots,k-1. \label{u}%
\end{equation}
Note that $u$ is not uniquely determined by $g$. We will always consider the
interior of $\mathcal{S}$ in the system group $\mathcal{G}$.

\begin{theorem}
\label{Theorem_generalLie}(i) The system semigroup $\mathcal{S}=\mathcal{S}%
(\mathcal{F})$ of (\ref{affine}) satisfies $\mathcal{S}_{\leq\tau}%
\subset\overline{\mathrm{int}(\mathcal{S}_{\leq\tau})}$ in $\mathcal{G}$ for
every $\tau>0$.

(ii) If $g\in\mathrm{int}(\mathcal{S}_{\leq\tau})$ for a $\tau>0$ then
$gx\in\mathrm{int}(\mathcal{O}_{\leq\tau}^{+}(x))$ for every $x\in
\mathbb{R}^{n}$.
\end{theorem}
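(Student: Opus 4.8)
The plan is to reduce both assertions to standard facts about the \emph{right-invariant} control system on the Lie group $\mathcal{G}$, exploiting the identification $\mathcal{S}(\mathcal{F})=\mathcal{S}^{R}(\mathcal{F})$ from Proposition \ref{Proposition_semigroups}. For (i), I would first observe that, by the computation in the proof of that proposition, each element $\exp(t_{1}X_{1})\cdots\exp(t_{k}X_{k})\in\mathcal{S}_{\leq\tau}$ (with $t_{i}>0$ and $\sum_{i}t_{i}\leq\tau$) equals $(\phi_{t_{k}}^{X_{k}^{R}}\circ\cdots\circ\phi_{t_{1}}^{X_{1}^{R}})(e)$. Hence $\mathcal{S}_{\leq\tau}$ is exactly the reachable set up to time $\tau$ from the identity $e$ of the system $\dot{g}=X^{R}(g)$, $X\in\mathcal{F}$, on the manifold $\mathcal{G}$. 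This system satisfies the accessibility rank condition on $\mathcal{G}$ automatically: the Lie brackets of the fields $X^{R}$, $X\in\mathcal{F}$, are again right-invariant and span $\mathcal{LA}(\mathcal{F})=T_{e}\mathcal{G}$ at $e$, so by right-invariance (through $(dR_{g})_{e}$, which is a linear isomorphism) they span $T_{g}\mathcal{G}$ at every $g\in\mathcal{G}$. I would then invoke the standard result that, for a locally accessible system, the reachable set up to any time $\tau$ is contained in the closure of its interior, obtaining $\mathcal{S}_{\leq\tau}\subset\overline{\mathrm{int}(\mathcal{S}_{\leq\tau})}$ in $\mathcal{G}$.

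The content behind that standard result is the following, and it is the main technical point. Accessibility guarantees that finitely many of the fields $X_{1}^{R},\ldots,X_{N}^{R}$ can be chosen so that the endpoint map $(s_{1},\ldots,s_{N})\mapsto(\phi_{s_{N}}^{X_{N}^{R}}\circ\cdots\circ\phi_{s_{1}}^{X_{1}^{R}})(e)$ has rank $\dim\mathcal{G}$ at some parameter with all $s_{i}>0$ and $\sum_{i}s_{i}<\tau$; by the rank theorem its image contains an open set, whence $\mathrm{int}(\mathcal{S}_{\leq\tau})\neq\emptyset$. To place an arbitrary $g\in\mathcal{S}_{\leq\tau}$ in the closure of the interior, one appends such a full-rank excursion of small total time to a trajectory ending at $g$ and lets the excursion time tend to $0$, producing interior points converging to $g$. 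Rather than reprove this, I would cite the accessibility/normal-reachability results of Krener and Sussmann--Jurdjevic, or the corresponding statements in Colonius and Kliemann \cite{ColK00}.

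For (ii), I would use that $\mathcal{F}$ acts transitively on $\mathbb{R}^{n}$, which was already established via Jurdjevic \cite{Jurd97} under (\ref{LARC}). For fixed $x$, the orbit map $\mathrm{ev}_{x}\colon\mathcal{G}\to\mathbb{R}^{n}$, $g\mapsto gx$, is smooth, and by transitivity together with equivariance it has constant rank equal to $\dim\mathbb{R}^{n}$, i.e.\ it is a surjective submersion and hence an open map. Since $g\in\mathrm{int}(\mathcal{S}_{\leq\tau})$, the set $\mathrm{ev}_{x}(\mathrm{int}(\mathcal{S}_{\leq\tau}))$ is open in $\mathbb{R}^{n}$, it contains $gx$, and it is contained in $\{hx\mid h\in\mathcal{S}_{\leq\tau}\}\subset\mathcal{O}_{\leq\tau}^{+}(x)$. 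Therefore $gx\in\mathrm{int}(\mathcal{O}_{\leq\tau}^{+}(x))$, as claimed.

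The main obstacle lies entirely in (i): verifying the nonvoid interior and the density of normally reachable interior points, that is, the accessibility machinery transported to $\mathcal{G}$. Once that standard result is available, the reduction via Proposition \ref{Proposition_semigroups} is routine, and (ii) follows cleanly from transitivity and the openness of the orbit map, with no further estimates required.
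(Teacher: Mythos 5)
Your proof is correct and follows essentially the same route as the paper: both use Proposition \ref{Proposition_semigroups} to identify $\mathcal{S}_{\leq\tau}$ with the time-$\tau$ reachable set from $e$ of the right-invariant system on $\mathcal{G}$, obtain (i) from standard accessibility theory on the group, and obtain (ii) from openness of the orbit map $g\mapsto gx$. The only differences are cosmetic: for (i) the paper invokes analyticity (the Lie-determined property and Jurdjevic's corollaries) where you verify the accessibility rank condition on $\mathcal{G}$ directly via right-invariance and cite Krener-type normal-reachability results, and for (ii) your constant-rank/submersion argument supplies the justification of openness that the paper merely asserts.
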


\begin{proof}
(i) The right invariant vector fields in $\mathcal{F}^{R}$ on $\mathcal{G}$
are real analytic. Since this implies that they are Lie-determined we can
apply Jurdjevic \cite[Corollary on p. 67]{Jurd97} which shows that for every
open set $U$ in an orbit of $\mathcal{F}_{r}$, any $y\in U$, and any $\tau>0$,
the reachable set $\mathcal{S}_{\leq\tau}(\mathcal{F}^{R})(y)\cap U$ contains
an open set in the orbit topology. In particular, this applies to the
reachable set up to time $\tau$ of the identity which by Proposition
\ref{Proposition_semigroups} coincides with $\mathcal{S}_{\leq\tau}$.
Furthermore, \cite[Corollary 1 on p. 68]{Jurd97} implies $\mathcal{S}%
_{\leq\tau}\subset\overline{\mathrm{int}(\mathcal{S}_{\leq\tau})}$.

(ii) The maps $\mathcal{G}\rightarrow\mathbb{R}^{n}:g\mapsto gx$ are open,
hence $g\in\mathrm{int}\left(  \mathcal{S}_{\leq\tau}\right)  $ implies
$gx\in\mathrm{int}(\mathcal{O}_{\leq\tau}^{+}(x))$.
\end{proof}

Next we relate the system semigroup and fixed points in control sets.

\begin{proposition}
\label{Proposition2.5}(i) Let $D\subset\mathbb{R}^{n}$ be a control set with
nonvoid interior for (\ref{affine}). Then for every $x\in\mathrm{int}(D)$
there are $\tau>0$ and $g\in\mathcal{S}_{\tau}\cap\mathrm{int}(\mathcal{S}%
_{\leq\tau+1})$ such that $gx=x$.

(ii) Conversely, let $g\in\mathrm{int}(\mathcal{S})$ with $gx=x$ for some
point $x\in\mathbb{R}^{n}$. Then $x\in\mathrm{int}(D)$ for some control set
$D\subset\mathbb{R}^{n}$.
\end{proposition}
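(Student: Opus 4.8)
The plan is to translate everything into the language of the system semigroup acting on $\mathbb{R}^{n}$, using $\mathcal{O}^{+}(x)=\mathcal{S}x$ and $\mathcal{O}^{-}(x)=\mathcal{S}^{-1}x$ (the latter because $\mathcal{G}$ is a group, so $y\in\mathcal{O}^{-}(x)$ iff $hy=x$ for some $h\in\mathcal{S}$, i.e. $y=h^{-1}x$), together with the fact recorded in the proof of Theorem \ref{Theorem_generalLie} that each action map $\alpha_{x}\colon\mathcal{G}\rightarrow\mathbb{R}^{n}$, $\alpha_{x}(h)=hx$, is open. The engine for (i) is Theorem \ref{Theorem_generalLie}, and the engine for (ii) is Lemma \ref{Lemma_intersection}; in both directions everything reduces to the membership $x\in\mathrm{int}(\mathcal{O}^{-}(x))\cap\mathrm{int}(\mathcal{O}^{+}(x))$.

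For (i) I would start from $x\in\mathrm{int}(D)$. By Remark \ref{Remark2.2} (local accessibility holds by (\ref{LARC})) one has $\mathrm{int}(D)\subset\mathcal{O}^{+}(x)$ and $D=\mathcal{O}^{-}(x)\cap\overline{\mathcal{O}^{+}(x)}\subset\mathcal{O}^{-}(x)$; since $\mathrm{int}(D)$ is open and contains $x$, this already gives $x\in\mathrm{int}(\mathcal{O}^{-}(x))$. Choose $\varepsilon\in(0,1]$ so small that $\mathcal{O}^{+}_{\leq\varepsilon}(x)\subset\mathcal{O}^{-}(x)$; this is possible because the vector fields are bounded by some $c$ on a ball $B(x,r)\subset\mathcal{O}^{-}(x)$, whence $\Vert\varphi(t,x,u)-x\Vert\leq c\,t$ uniformly in $u$. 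By Theorem \ref{Theorem_generalLie}(i) the set $\mathrm{int}(\mathcal{S}_{\leq\varepsilon})$ is nonvoid; pick $h$ in it, say $h\in\mathcal{S}_{t_{h}}$ with $t_{h}\leq\varepsilon$, and set $y:=hx\in\mathcal{O}^{+}_{\leq\varepsilon}(x)\subset\mathcal{O}^{-}(x)$. Then $x\in\mathcal{O}^{+}(y)$, so there are $s\geq0$ and $g'\in\mathcal{S}_{s}$ with $g'y=x$. Put $g:=g'h$; then $gx=g'y=x$ and $g\in\mathcal{S}_{\tau}$ with $\tau:=s+t_{h}$. For the interior claim, left translation $L_{g'}$ is a homeomorphism of $\mathcal{G}$ with $g'\mathcal{S}_{\leq\varepsilon}\subset\mathcal{S}_{\leq s+\varepsilon}$, so the open set $L_{g'}(\mathrm{int}(\mathcal{S}_{\leq\varepsilon}))$ lies in $\mathcal{S}_{\leq s+\varepsilon}$, giving $g=L_{g'}(h)\in\mathrm{int}(\mathcal{S}_{\leq s+\varepsilon})$. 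Since $s+\varepsilon\leq\tau+1$ (because $\varepsilon\leq1\leq t_{h}+1$), we conclude $g\in\mathcal{S}_{\tau}\cap\mathrm{int}(\mathcal{S}_{\leq\tau+1})$.

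For (ii), given $g\in\mathrm{int}(\mathcal{S})$ with $gx=x$, the plan is simply to verify the hypothesis of Lemma \ref{Lemma_intersection}. Choosing an open $U\subset\mathcal{S}$ with $g\in U$, openness of $\alpha_{x}$ makes $\alpha_{x}(U)\subset\mathcal{S}x=\mathcal{O}^{+}(x)$ an open neighborhood of $\alpha_{x}(g)=x$, so $x\in\mathrm{int}(\mathcal{O}^{+}(x))$. Applying group inversion (a homeomorphism of $\mathcal{G}$) gives $g^{-1}\in\mathrm{int}(\mathcal{S}^{-1})$ with $g^{-1}x=x$; since $\mathcal{O}^{-}(x)=\mathcal{S}^{-1}x$ and $\alpha_{x}$ is open, the identical argument yields $x\in\mathrm{int}(\mathcal{O}^{-}(x))$. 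Thus $x\in\mathrm{int}(\mathcal{O}^{-}(x))\cap\mathrm{int}(\mathcal{O}^{+}(x))$, and Lemma \ref{Lemma_intersection} delivers a control set $D=\mathcal{O}^{-}(x)\cap\overline{\mathcal{O}^{+}(x)}$ with $x\in\mathrm{int}(D)$.

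The main obstacle I anticipate is entirely in (i): passing from the mere existence of a returning element (which only yields a periodic point of $\mathcal{S}$, possibly on its boundary) to an interior element with a controlled time budget. The device of concatenating a short interior element $h\in\mathrm{int}(\mathcal{S}_{\leq\varepsilon})$ with a returning element $g'$ and then tracking times through the homeomorphism $L_{g'}$ is precisely what forces membership in $\mathrm{int}(\mathcal{S}_{\leq\tau+1})$ rather than only in $\mathrm{int}(\mathcal{S})$; the slack ``$+1$'' is there to absorb the short time $\varepsilon\leq1$. By contrast, (ii) is essentially formal once the openness of $\alpha_{x}$ and the description $\mathcal{O}^{-}(x)=\mathcal{S}^{-1}x$ are recorded.
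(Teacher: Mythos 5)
Your part (ii) is correct and is essentially the paper's own argument: from $g\in\mathrm{int}(\mathcal{S})$ and $gx=x$ you get $x\in\mathrm{int}(\mathcal{O}^{+}(x))\cap\mathrm{int}(\mathcal{O}^{-}(x))$ via openness of $h\mapsto hx$, and Lemma \ref{Lemma_intersection} finishes. Note that here you only need the (true) inclusions $\mathcal{S}x\subset\mathcal{O}^{+}(x)$ and $\mathcal{S}^{-1}x\subset\mathcal{O}^{-}(x)$, not the equalities you assert in your preamble.

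Part (i) follows the same strategy as the paper --- compose a short element of $\mathrm{int}(\mathcal{S}_{\leq\varepsilon})$ with an exact return element and track times through the left-translation homeomorphism; that bookkeeping, including the role of the slack ``$+1$'', is exactly right. The gap is the step ``Then $x\in\mathcal{O}^{+}(y)$, so there are $s\geq0$ and $g'\in\mathcal{S}_{s}$ with $g'y=x$,'' which rests on your identification $\mathcal{O}^{-}(x)=\mathcal{S}^{-1}x$. In the paper the orbits $\mathcal{O}^{\pm}(x)$ are defined with controls $u\in\mathcal{U}\subset L^{\infty}$, whereas $\mathcal{S}$ consists of concatenations of flows of the constant-control vector fields, i.e., of trajectories for $u\in\mathcal{U}_{pc}$. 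So $y\in\mathcal{O}^{-}(x)$ only provides a measurable control steering $y$ exactly to $x$; the approximation of $\mathcal{U}$-trajectories by $\mathcal{U}_{pc}$-trajectories is uniform on bounded intervals but not exact at the endpoint, and nothing in the paper (nor any general fact) lets you replace the measurable control by a piecewise constant one reaching $x$ precisely. Since the whole point of (i) is to produce a semigroup element fixing $x$, this is where the proof as written fails. The repair is cheap and is precisely the paper's device: your metric estimate allows you to choose $\varepsilon$ so small that $\mathcal{O}_{\leq\varepsilon}^{+}(x)\subset B(x,r)\subset\mathrm{int}(D)$ (rather than merely $\subset\mathcal{O}^{-}(x)$); then $y=hx\in\mathrm{int}(D)$, and Remark \ref{Remark2.2} guarantees exact controllability inside $\mathrm{int}(D)$ with controls in $\mathcal{U}_{pc}$, i.e., it yields the desired $g'\in\mathcal{S}_{s}$ with $g'y=x$. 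With that one change, the rest of your time accounting goes through unchanged.
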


\begin{proof}
(i) Let $x\in\mathrm{int}(D)$. By continuity of the action the set
$H=\{h\in\mathcal{G}\left\vert hx\in\mathrm{int}(D)\right.  \}$ is open in
$\mathcal{G}$, hence for all $t>0$ small enough there exists an element
$h\in\mathcal{S}_{t}\cap H$. By Proposition \ref{Proposition_semigroups}(i) it
follows that for $\sigma_{k}\rightarrow0^{+}$ there are elements $g_{k}%
\in\mathrm{int}(\mathcal{S}_{\leq\sigma_{k}})$ converging to the identity in
$\mathcal{G}$, and hence $h_{k}:=g_{k}h\in\mathrm{int}\left(  \mathcal{S}%
_{\leq t+\sigma_{k}}\right)  \rightarrow h$. Since $H$ is open there is
$k\in\mathbb{N}$ large enough such that $h_{k}\in H\cap\mathcal{S}_{t+\sigma
}\cap\mathrm{int}(\mathcal{S}_{\leq t+\sigma_{k}})$ for some $\sigma\in
\lbrack0,\sigma_{k}]$, hence $h_{k}x\in\mathrm{int}(D)$. By Remark
\ref{Remark2.2}, exact controllability in $\mathrm{int}(D)$ holds. Thus we
find $h_{0}\in\mathcal{S}_{s},s>0$, such that $h_{0}h_{k}x=x$. It follows that
$g:=h_{0}h_{k}\in\mathcal{S}_{\tau}\cap\mathrm{int}(\mathcal{S}_{\leq\tau+1})$
with $\tau:=t+\sigma+s$ and $gx=x$.

(ii) Every $g\in\mathrm{int}(\mathcal{S})$ satisfies $gx\in\mathrm{int}%
(\mathcal{O}^{+}(x))$ and $x\in\mathrm{int}(\mathcal{O}^{-}(gx))$ for all $x$.
Now $gx=x$ implies that $x\in\mathrm{int}(\mathcal{O}^{+}(x))\cap
\mathrm{int}(\mathcal{O}^{-}(x))$, and hence Lemma \ref{Lemma_intersection}
shows that $D=\mathcal{O}^{-}(x)\cap\overline{\mathcal{O}^{+}(x)}$ is a
control set with $x\in\mathrm{int}(D)$.
\end{proof}

Note that $g\in\mathrm{int}(\mathcal{S})$ implies that $g\in\mathrm{int}%
(\mathcal{S}_{\leq\tau})$ for some $\tau>0$; cf. Colonius and Kliemann
\cite[Lemma 4.5.2]{ColK00}. The control $u$ in (\ref{u}) also determines the
element $\Phi_{u}(\tau,0)$ of the system semigroup of the homogeneous bilinear
control system (\ref{hom}). We denote the corresponding semigroup by
$_{\mathbb{R}}\mathcal{S}^{\hom}\subset GL(n,\mathbb{R})$. Theorem
\ref{Theorem_generalLie} and Proposition \ref{Proposition2.5} are also valid
for system (\ref{hom}) and $_{\mathbb{R}}\mathcal{S}^{\hom}$ provided that the
corresponding accessibility rank condition in $\mathbb{R}^{n}\setminus\{0\}$ holds.

Next we describe the relation between the action of the system semigroup and
periodic control functions. We may extend the control defined by (\ref{u}) to
a $\tau$-periodic control function in $\mathcal{U}_{pc}$. The next lemma
follows immediately from Proposition \ref{Proposition_periodicODE}(i).

\begin{lemma}
\label{Lemma_periodic}Let $g(u)\in\mathcal{S}_{\tau}$. Then $g(u)x=x$ for some
$x\in\mathbb{R}^{n}$ if and only if the corresponding $\tau$-periodic
differential equation in (\ref{affine}) has a $\tau$-periodic solution with
initial value $x(0)=x$. This solution is unique if and only if $1\not \in
\mathrm{spec}(\Phi_{u}(\tau,0))$.
\end{lemma}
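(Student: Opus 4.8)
The plan is to recognize the fixed-point equation $g(u)x=x$ as precisely the periodicity condition for the inhomogeneous linear equation obtained by extending the piecewise constant control to a $\tau$-periodic function, and then to read off existence and uniqueness directly from Proposition \ref{Proposition_periodicODE}(i). First I would extend the control $u$ of (\ref{u}) to a $\tau$-periodic control function in $\mathcal{U}_{pc}$. Substituting it into (\ref{affine}) produces the $\tau$-periodic inhomogeneous linear differential equation $\dot{x}(t)=A(u(t))x(t)+(Cu(t)+d)$, which is of the form (\ref{periodic2}) with $P(t)=A(u(t))$ and $z(t)=Cu(t)+d$; both coefficients are $\tau$-periodic and bounded, and the principal fundamental solution of this equation is exactly the propagator $\Phi_u(\tau,0)$ of the homogeneous part (\ref{hom}) associated with $u$.

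Next I would use the identity $g(u)x=\varphi(\tau,x,u)$ from the description of the semigroup action, together with the variation-of-constants formula
\[
\varphi(\tau,x,u)=\Phi_u(\tau,0)x+\int_0^\tau\Phi_u(\tau,s)(Cu(s)+d)\,ds .
\]
Because the differential equation is $\tau$-periodic, its solution starting at $x$ extends to a genuine $\tau$-periodic solution on all of $\mathbb{R}$ if and only if $\varphi(\tau,x,u)=x$. By the displayed formula this is equivalent to $(I-\Phi_u(\tau,0))x=\int_0^\tau\Phi_u(\tau,s)(Cu(s)+d)\,ds$, which is exactly the fixed-point equation $g(u)x=x$. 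This establishes the asserted equivalence between $g(u)x=x$ and the existence of a $\tau$-periodic solution with initial value $x(0)=x$.

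Finally, for the uniqueness claim I would invoke Proposition \ref{Proposition_periodicODE}(i): the $\tau$-periodic solution is unique precisely when $I-\Phi_u(\tau,0)$ is invertible, i.e., when $1\not\in\mathrm{spec}(\Phi_u(\tau,0))$, its initial value then being $(I-\Phi_u(\tau,0))^{-1}\int_0^\tau\Phi_u(\tau,s)(Cu(s)+d)\,ds$; whereas if $1\in\mathrm{spec}(\Phi_u(\tau,0))$, then given one periodic solution any element of the nontrivial kernel of $I-\Phi_u(\tau,0)$ may be added to its initial value, yielding non-uniqueness. I do not expect a genuine obstacle here, since the statement is advertised as an immediate consequence of Proposition \ref{Proposition_periodicODE}(i); the only point requiring care is the bookkeeping that identifies the affine flow $\varphi(\tau,\cdot,u)$ with the variation-of-constants expression and confirms that the relevant fundamental matrix is the homogeneous-part propagator $\Phi_u(\tau,0)$ rather than any propagator of the full affine equation.
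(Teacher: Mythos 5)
Your proof is correct and follows essentially the same route as the paper, which simply notes that the lemma ``follows immediately from Proposition \ref{Proposition_periodicODE}(i)'' after the $\tau$-periodic extension of the control; your variation-of-constants bookkeeping identifying $g(u)x=\varphi(\tau,x,u)$ and the fixed-point equation $(I-\Phi_u(\tau,0))x=\int_0^\tau\Phi_u(\tau,s)(Cu(s)+d)\,ds$ is exactly the intended (unwritten) argument. The only cosmetic difference is that you also spell out the non-uniqueness direction via the kernel of $I-\Phi_u(\tau,0)$, which the paper leaves implicit in Proposition \ref{Proposition_periodicODE}(i).
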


Note the following results on continuous dependence.

\begin{lemma}
\label{Lemma_path_a1}Let $u,v\in\mathcal{U}_{pc}$ be $\sigma$-periodic and
$\tau$-periodic, resp., for some $\sigma,\tau>0$. Define for $\alpha\in
\lbrack\sigma,\sigma+\tau]$
\[
u^{\alpha}(t):=u(t)\text{ for }t\in\lbrack0,\sigma],\,u^{\alpha}%
(t):=v(t-\sigma)\text{ for }t\in\lbrack\sigma,\alpha],
\]
and extend $u^{\alpha}$ to an $\alpha$-periodic control $u^{\alpha}%
\in\mathcal{U}_{pc}$. Then the controls $u_{\left\vert [0,\sigma+\tau]\right.
}^{\alpha}$ depend continuously on $\alpha$ as elements of $L^{1}%
([0,\sigma+\tau],\mathbb{R}^{m})$. Furthermore, the principal fundamental
solutions $\Phi_{u^{\alpha}}(\alpha,0)\in\mathbb{R}^{n\times n}$ and the
elements $g(u^{\alpha})\in\mathbb{R}^{n}\ltimes GL(\mathbb{R}^{n})$ depend, in
a continuous and piecewise analytic way, on $\alpha$.
\end{lemma}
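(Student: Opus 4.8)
The plan is to treat the two assertions separately, because the first concerns the control as an $L^{1}$-function on the full interval $[0,\sigma+\tau]$, where the $\alpha$-periodic repetition matters, whereas the second concerns only $\Phi_{u^{\alpha}}(\alpha,0)$ and $g(u^{\alpha})$, which are determined by the restriction of $u^{\alpha}$ to the single fundamental domain $[0,\alpha]$.

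For the $L^{1}$-continuity I would encode the periodicity through a phase function. Let $w\in\mathcal{U}_{pc}$ be defined on $[0,\sigma+\tau]$ by $w(t)=u(t)$ for $t\in[0,\sigma]$ and $w(t)=v(t-\sigma)$ for $t\in[\sigma,\sigma+\tau]$, and for $\alpha\in[\sigma,\sigma+\tau]$ set $r_{\alpha}(t)=t-\alpha\lfloor t/\alpha\rfloor\in[0,\alpha)$. Then on $[0,\sigma+\tau]$ one has $u^{\alpha}(t)=w(r_{\alpha}(t))$, since $r_{\alpha}(t)<\alpha\le\sigma+\tau$ so that only the values of $w$ on the fundamental domain are used. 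Fixing $\alpha_{0}$, the map $t\mapsto\lfloor t/\alpha\rfloor$ is locally constant in $\alpha$ at every $t$ with $t/\alpha_{0}\notin\mathbb{Z}$, whence $r_{\alpha}(t)\to r_{\alpha_{0}}(t)$ for all but finitely many $t\in[0,\sigma+\tau]$. As $w$ is piecewise constant it is continuous off a finite set, and the set of $t$ for which $r_{\alpha_{0}}(t)$ is a jump point of $w$ is again finite; hence $u^{\alpha}(t)=w(r_{\alpha}(t))\to w(r_{\alpha_{0}}(t))=u^{\alpha_{0}}(t)$ for almost every $t$. Since $\Omega$ is bounded, all $u^{\alpha}$ are bounded by a common constant, and dominated convergence yields $u^{\alpha}\to u^{\alpha_{0}}$ in $L^{1}([0,\sigma+\tau],\mathbb{R}^{m})$.

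For the second assertion the key is the cocycle (concatenation) property. On $[0,\sigma]$ one has $u^{\alpha}=u$ and on $[\sigma,\alpha]$ one has $u^{\alpha}(\cdot)=v(\cdot-\sigma)$, so by the composition rule for fundamental solutions together with the time shift,
\[
\Phi_{u^{\alpha}}(\alpha,0)=\Phi_{v}(\alpha-\sigma,0)\,\Phi_{u}(\sigma,0),
\]
where the right factor $\Phi_{u}(\sigma,0)$ is independent of $\alpha$. Likewise, writing the time-$\sigma$ flow of (\ref{affine}) for $u$ as a fixed element $g_{u}\in\mathbb{R}^{n}\ltimes GL(\mathbb{R}^{n})$ and the time-$(\alpha-\sigma)$ flow for $v$ as $g_{v}(\alpha-\sigma)$, the semigroup action gives $g(u^{\alpha})=g_{v}(\alpha-\sigma)\cdot g_{u}$. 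I would then show that $s\mapsto\Phi_{v}(s,0)$ and $s\mapsto g_{v}(s)$ are continuous and piecewise analytic on $[0,\tau]$: on each interval where $v\equiv v_{j}$ is constant these objects equal $\exp\big((s-s_{j})X^{v_{j}}\big)$ times a constant, and by the explicit exponential $\exp tX=(\int_{0}^{t}e^{(t-r)A}a\,dr,e^{tA})$ for $X=(a,A)$ their entries are analytic in $s$; across the finitely many switching times $s_{j}$ of $v$ they remain continuous because flows concatenate continuously. Since right multiplication by the fixed element $g_{u}$ (respectively $\Phi_{u}(\sigma,0)$) is affine and hence analytic, the displayed factorizations show that $\alpha\mapsto\Phi_{u^{\alpha}}(\alpha,0)$ and $\alpha\mapsto g(u^{\alpha})$ are continuous and piecewise analytic on $[\sigma,\sigma+\tau]$, with breakpoints at $\alpha=\sigma+s_{j}$.

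The main obstacle is the first assertion: because the period $\alpha$ itself varies, one cannot simply invoke continuity of translation in $L^{1}$, and the repeated copies of the fundamental pattern drift relative to one another; the phase-function device together with dominated convergence is what makes this wrap-around harmless. For the second assertion the factorization makes both continuity and analyticity transparent. I note that the continuity there could alternatively be deduced from the first assertion via Proposition \ref{Proposition_periodicODE}(iii), but the piecewise analyticity genuinely requires the explicit matrix-exponential description and the finiteness of the switching set of $v$.
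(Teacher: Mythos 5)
Your proof is correct and takes essentially the same route as the paper: the paper's proof likewise writes $\Phi_{u^{\alpha}}(\alpha,0)$ and $g(u^{\alpha})$ as explicit products of matrix exponentials $e^{t_{j}A(v^{j})}$ and Lie-group exponentials $\exp(t_{j}X^{v^{j}})$ over the switching intervals of $v$, applied on the right to the fixed elements $\Phi_{u}(\sigma,0)$ and $g(u)$, with only the final factor depending (analytically) on $\alpha$ --- which is exactly your factorization through $\Phi_{v}(\alpha-\sigma,0)$ and $g_{v}(\alpha-\sigma)$. The only difference is that the paper disposes of the $L^{1}$-continuity claim with the remark that it too ``follows from the explicit expressions,'' whereas your phase-function and dominated-convergence argument supplies that missing detail correctly.
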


\begin{proof}
We may write the control $v$ as $v(t)=v^{j}$ for $t\in\lbrack t_{0}%
+\cdots+t_{j-1},t_{0}+\cdots+t_{j}),$ where $v^{j}\in\Omega,t_{0}:=0$ and
$t_{j}>0$ for $j=1,\ldots,\ell$ with $\tau=t_{1}+\cdots+t_{\ell}$. The
assertions follow from the explicit expressions for $\alpha\in\lbrack
\sigma+\sum_{i=0}^{j-1}t_{i},\sigma+\sum_{i=0}^{j}t_{i})$ and $j=1,\ldots
,\ell$,%
\begin{align*}
\Phi_{u^{\alpha}}(\alpha,0) &  =e^{(\alpha-\sigma-\sum_{i=0}^{j-1}%
t_{i})A(v^{j})}e^{t_{i-1}A(v^{j-1})}\cdots e^{t_{1}A(v^{1})}\Phi_{u}%
(\sigma,0),\\
g(u^{\alpha}) &  =\exp\left(  (\alpha-\sigma-\sum\nolimits_{i=0}^{j-1}%
t_{i})X^{v^{j}}\right)  \exp\left(  t_{j-1}X^{v^{j-1}}\right)  \cdots
\exp\left(  t_{1}X^{v^{1}}\right)  g(u),
\end{align*}
where $X^{v^{j}}$ is the affine vector field $X^{v^{j}}(x)=A(v^{j})x+Cv^{j}+d$.
\end{proof}

This result can be used in order to analyze the system semigroup $\mathcal{S}$
of the affine system and the system semigroup $_{\mathbb{R}}\mathcal{S}^{\hom
}$ of the homogeneous part.

\begin{lemma}
\label{Lemma_path_a2}Let $g(u)\in\mathcal{S}_{\sigma}\cap\mathrm{int}%
(\mathcal{S})$ and $g(v)\in\mathcal{S}_{\tau}\cap\mathrm{int}(\mathcal{S})$
for some $\sigma,\tau>0$. Then there exist for $\alpha\in\lbrack0,1]$
$\tau_{\alpha}$-periodic controls $u^{\alpha}$ with $u^{0}=u,\tau_{0}=\sigma$
and $u^{1}=v,\tau_{1}=\tau$ such that the maps $p:[0,1]\rightarrow
\mathrm{int}(\mathcal{S})$ and $p^{\hom}:[0,1]\rightarrow\,_{\mathbb{R}%
}\mathcal{S}^{\hom}$,%
\[
p(\alpha):=g(u^{\alpha})\in\mathrm{int}(\mathcal{S})\text{ and }p^{\hom
}(\alpha):=\Phi_{u^{\alpha}}(\tau_{\alpha},0)\in\,_{\mathbb{R}}\mathcal{S}%
^{\hom},\,\alpha\in\lbrack0,1],
\]
are continuous paths. Furthermore the continuity and smoothness properties
from Lemma \ref{Lemma_path_a1} hold for $u^{a},\Phi_{u^{\alpha}}(\tau_{\alpha
},0)\in\,_{\mathbb{R}}\mathcal{S}^{\hom}$ and $g(u^{\alpha})\in\mathcal{S}$
and also for $\tau_{\alpha}$.
\end{lemma}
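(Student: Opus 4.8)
The plan is to join $g(u)$ and $g(v)$ inside $\mathrm{int}(\mathcal{S})$ by a concatenation path that passes through the common element $g(v)g(u)$, so that the total control time never shrinks to $0$. The two facts I would use repeatedly are that left and right translations by elements of the group $\mathcal{G}$ are homeomorphisms, whence
\[
\mathcal{S}\cdot\mathrm{int}(\mathcal{S})\subset\mathrm{int}(\mathcal{S})\quad\text{and}\quad\mathrm{int}(\mathcal{S})\cdot\mathcal{S}\subset\mathrm{int}(\mathcal{S}):
\]
if $W\subset\mathcal{S}$ is open with $h\in W$ and $k\in\mathcal{S}$, then $kW$ (resp.\ $Wk$) is open, contained in $\mathcal{S}$, and contains $kh$ (resp.\ $hk$). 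Since $g(u),g(v)\in\mathrm{int}(\mathcal{S})$ by hypothesis, any product $g(v')\,g(u)$ or $g(v)\,g(u')$ in which $v',u'$ are flows of initial segments of the piecewise constant controls $v,u$ then automatically lies in $\mathrm{int}(\mathcal{S})$.

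First I would construct the path on $[0,\tfrac12]$ as a direct instance of Lemma \ref{Lemma_path_a1} with first control $u$ and second control $v$, reparametrizing its interval $[\sigma,\sigma+\tau]$ to $[0,\tfrac12]$: growing the appended $v$-segment after the full period of $u$ gives $\tau_\alpha$-periodic controls with $u^{0}=u$, $\tau_0=\sigma$ (empty $v$-segment) and, at $\alpha=\tfrac12$, the control ``$u$ then $v$'' of period $\sigma+\tau$, so that $p(\tfrac12)=g(v)g(u)$ and $\Phi_{u^{1/2}}(\sigma+\tau,0)=\Phi_v(\tau,0)\Phi_u(\sigma,0)$. By Lemma \ref{Lemma_path_a1} the maps $\alpha\mapsto g(u^\alpha)$, $\alpha\mapsto\Phi_{u^\alpha}(\tau_\alpha,0)$ and $\alpha\mapsto\tau_\alpha$ are continuous and piecewise analytic here, and every intermediate element has the form $g(v')\,g(u)\in\mathrm{int}(\mathcal{S})$ by the inclusions above.

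On $[\tfrac12,1]$ I would run the mirror construction: keep $v$ as the (full) second block and shrink the initial $u$-block from its full period $\sigma$ down to the empty control, so that $u^{1/2}$ is again ``$u$ then $v$'' of period $\sigma+\tau$ with $p(\tfrac12)=g(v)g(u)$, while $u^{1}=v$, $\tau_1=\tau$ and $p(1)=g(v)$. The controls are now ``$u' $ then $v$'' and the corresponding elements are $g(v)\,g(u')\in\mathrm{int}(\mathcal{S})$, using $g(v)\in\mathrm{int}(\mathcal{S})$. Continuity and piecewise analyticity of $g(u^\alpha)$, $\Phi_{u^\alpha}(\tau_\alpha,0)$ and $\tau_\alpha$ follow exactly as in Lemma \ref{Lemma_path_a1} from the same explicit product formulas, now with the varying block in front. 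At the junction $\alpha=\tfrac12$ the two pieces give the same control and hence the same values $g(v)g(u)$ and $\Phi_v(\tau,0)\Phi_u(\sigma,0)$, so $p$, $p^{\hom}$ and $\tau_\alpha$ are continuous on all of $[0,1]$; finally $p^{\hom}(\alpha)=\Phi_{u^\alpha}(\tau_\alpha,0)\in{}_{\mathbb{R}}\mathcal{S}^{\hom}$ because each $u^\alpha\in\mathcal{U}_{pc}$ has positive total duration $\tau_\alpha>0$.

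The main obstacle, and the reason for routing through $g(v)g(u)$ rather than contracting $g(u)$ to the identity, is precisely to keep the entire path inside $\mathrm{int}(\mathcal{S})$: the identity lies on the boundary of $\mathcal{S}$, so any interpolation letting the total control time tend to $0$ would leave the interior. Reaching the common midpoint by appending $v$ on one side and prepending $u$ on the other circumvents this, and the two translation-invariance inclusions render interior-membership automatic, so that no estimate beyond Lemma \ref{Lemma_path_a1} is required.
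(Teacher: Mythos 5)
Your proposal is correct and follows essentially the same route as the paper's proof: both join $g(u)$ to $g(v)$ through the common element $g(v)g(u)$, using Lemma \ref{Lemma_path_a1} for the appended-$v$ piece and the mirrored (prepended-$u$) construction for the other piece, with interior membership secured by the inclusions $\mathcal{S}\cdot\mathrm{int}(\mathcal{S})\subset\mathrm{int}(\mathcal{S})$ and $\mathrm{int}(\mathcal{S})\cdot\mathcal{S}\subset\mathrm{int}(\mathcal{S})$. You merely spell out details the paper leaves implicit (the translation-invariance argument, the junction at $g(v)g(u)$, and the reparametrization), so there is no substantive difference.
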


\begin{proof}
By Lemma \ref{Lemma_path_a1} one finds a path in $\mathcal{S}$ from $g(u)$ to
$g(v)g(u)$ and an analogous construction yields a path in $\mathcal{S}$ from
$g(v)$ to $g(v)g(u)$. Since for any $g^{\prime}\in\mathcal{S}$ and
$g^{\prime\prime}\in\mathrm{int}(\mathcal{S})$ it follows that $g^{\prime
}g^{\prime\prime},g^{\prime\prime}g^{\prime}\in\mathrm{int}(\mathcal{S})$, the
elements on the paths are in $\mathrm{int}(\mathcal{S})$. Combining these
paths one obtains a path from $g(u)$ to $g(v)$. This can be reparametrized to
obtain a path with $\alpha\in\lbrack0,1]$ and periods $\tau_{\alpha}$ in
$[0,\sigma+\tau]$. Analogously one obtains the path $p^{\hom}$. The smoothness
properties remain valid.
\end{proof}

Next we use these lemmas to prove spectral properties of elements in the
interior of the system semigroup and corresponding periodic solutions.

\begin{proposition}
\label{Proposition_analytic_new}Let $g(u)\in\mathcal{S}_{\sigma}%
\cap\mathrm{int}(\mathcal{S})$ for some $\sigma>0$ with $1\not \in
\mathrm{spec}(\Phi_{u}(\sigma,0))$ and $g(v)\in\mathcal{S}_{\tau}%
\cap\mathrm{int}(\mathcal{S})$ for some $\tau>0$. Consider the paths $p$ in
$\mathrm{int}(\mathcal{S})$ and $p^{\hom}$ in $_{\mathbb{R}}\mathcal{S}^{\hom
}$ constructed in Lemma \ref{Lemma_path_a2}.

(i) For every $\varepsilon>0$ there are $\tau_{\alpha}$-periodic controls
$w^{\alpha}\in\mathcal{U}_{pc}$ and continuous paths $p_{1}:[0,1]\rightarrow
\mathrm{int}(\mathcal{S})$ with $p_{1}(\alpha)=g(w^{\alpha})$ for $\alpha
\in\lbrack0,1]$ and $p_{1}^{\hom}:[0,1]\rightarrow\,_{\mathbb{R}}%
\mathcal{S}^{\hom}$ with $p_{1}^{\hom}(\alpha)=\Phi_{w^{\alpha}}(\tau_{\alpha
},0)$ with%
\begin{align*}
\left\Vert p_{1}(1)-g(v)\right\Vert  & =\left\Vert g(w^{1})-g(v)\right\Vert
<\varepsilon,\\
\left\Vert p_{1}^{\hom}(1)-\Phi_{v}(\tau,0)\right\Vert  & =\left\Vert
\Phi_{w^{1}}(\tau_{1},0)-\Phi_{v}(\tau,0)\right\Vert <\varepsilon,\,
\end{align*}
such that $1\not \in \mathrm{spec}(\Phi_{w^{\alpha}}(\tau_{\alpha},0)$ for all
but at most finitely many $\alpha\in\lbrack0,1]$, and the continuity and
smoothness properties from Lemma \ref{Lemma_path_a1} hold for $u^{\alpha}%
,\tau_{\alpha},\Phi_{w^{\alpha}}(\alpha,0)\in\,_{\mathbb{R}}\mathcal{S}^{\hom
}$, and $g(w^{\alpha})\in\mathcal{S}$.

(ii) If $1\not \in \mathrm{spec}(\Phi_{u^{\alpha_{0}}}(\tau_{\alpha_{0}},0))$
(or $1\not \in \mathrm{spec}(\Phi_{w^{\alpha_{0}}}(\tau_{\alpha_{0}},0))$) for
some $\alpha_{0}\in\lbrack0,1]$, then for all $\alpha$ in a neighborhood of
$\alpha_{0}$ there are unique $\tau^{\alpha}$-periodic solutions with initial
values $x^{\alpha}$ depending continuously on $\alpha$.
\end{proposition}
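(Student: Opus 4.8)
I would dispatch (ii) first, as a direct consequence of Proposition \ref{Proposition_periodicODE}, and reserve the real work for the spectral genericity in (i). Let $(u^\alpha,\tau_\alpha)$ denote either the path of Lemma \ref{Lemma_path_a2} or the perturbed path $w^\alpha$ from (i). By Lemmas \ref{Lemma_path_a1} and \ref{Lemma_path_a2} the data $P^\alpha(t):=A(u^\alpha(t))$, $z^\alpha(t):=Cu^\alpha(t)+d$ of the associated $\tau_\alpha$-periodic equation converge in $L^1$ as $\alpha\to\alpha_0$, the periods $\tau_\alpha$ converge, and the $L^\infty$-norms stay bounded because the controls take values in the bounded set $\Omega$; thus Proposition \ref{Proposition_periodicODE}(iii) applies and $\Phi_{u^\alpha}(\tau_\alpha,0)\to\Phi_{u^{\alpha_0}}(\tau_{\alpha_0},0)$. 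Since $\det(I-M)$ is continuous and nonzero at $M=\Phi_{u^{\alpha_0}}(\tau_{\alpha_0},0)$, the condition $1\notin\mathrm{spec}(\Phi_{u^\alpha}(\tau_\alpha,0))$ persists on a neighborhood of $\alpha_0$. There Proposition \ref{Proposition_periodicODE}(i) produces the unique $\tau_\alpha$-periodic solution, and Proposition \ref{Proposition_periodicODE}(iv), applied along an arbitrary sequence $\alpha_k\to\alpha_0$, yields $x^{\alpha_k}\to x^{\alpha_0}$, i.e.\ continuity of $\alpha\mapsto x^\alpha$.

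For (i) set $H(\alpha):=\det\bigl(I-\Phi_{u^\alpha}(\tau_\alpha,0)\bigr)$ for $\alpha\in[0,1]$. By Lemma \ref{Lemma_path_a2} the path $p^{\hom}$ is continuous and piecewise analytic, so $H$ is continuous and real-analytic on each interval of a finite partition $I_1,\dots,I_N$ of $[0,1]$. On every $I_j$ with $H|_{I_j}\not\equiv0$ the zeros of $H$ are isolated, hence finite in number, and the whole difficulty is concentrated in the pieces on which $H\equiv0$, that is, $1\in\mathrm{spec}(\Phi_{u^\alpha}(\tau_\alpha,0))$ throughout $I_j$. Observe that $H(0)=\det(I-\Phi_u(\sigma,0))\neq0$ by hypothesis, so keeping $w^0=u$ already settles the first piece; the task is to perturb $u^\alpha$ on the remaining problematic pieces to a nearby $w^\alpha$ for which $H$ vanishes identically on no piece, while $g(w^1)$ and $\Phi_{w^1}(\tau_1,0)$ stay within $\varepsilon$ of $g(v)$ and $\Phi_v(\tau,0)$.

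The perturbation I would use is to insert, near a chosen interior point $\alpha_\ast$ of a problematic piece $I_j$, a short extra control segment of small total duration, i.e.\ to replace $\Phi_{u^{\alpha_\ast}}(\tau_{\alpha_\ast},0)$ by $\Phi_{u^{\alpha_\ast}}(\tau_{\alpha_\ast},0)\cdot\Phi_{g_s}$ with $g_s$ ranging over a small element of $\mathrm{int}(\mathcal{S})$ depending on a parameter $s$. This is exactly the device used in the proof of Proposition \ref{Proposition2.5}(i): composing with small interior elements keeps $g(w^\alpha)\in\mathrm{int}(\mathcal{S})$, since $g'g''\in\mathrm{int}(\mathcal{S})$ whenever $g''\in\mathrm{int}(\mathcal{S})$, and the continuity and piecewise-analytic dependence of Lemma \ref{Lemma_path_a1} is preserved by such insertions. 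The key point is that interiority transfers to the homogeneous semigroup: the projection $\mathbb{R}^n\ltimes GL(\mathbb{R}^n)\to GL(\mathbb{R}^n)$ is an open homomorphism mapping $\mathcal{S}$ onto $_{\mathbb{R}}\mathcal{S}^{\hom}$, so from $g(u^{\alpha_\ast})\in\mathrm{int}(\mathcal{S})$ (Lemma \ref{Lemma_path_a2}) we get $\Phi_{u^{\alpha_\ast}}(\tau_{\alpha_\ast},0)\in\mathrm{int}(_{\mathbb{R}}\mathcal{S}^{\hom})$, and the reachable matrices $\Phi_{u^{\alpha_\ast}}(\tau_{\alpha_\ast},0)\cdot\Phi_{g_s}$ sweep out a genuinely $n^2$-dimensional open subset of $GL(\mathbb{R}^n)$ near $\Phi_{u^{\alpha_\ast}}(\tau_{\alpha_\ast},0)$.

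Because the set $Z:=\{M\in GL(\mathbb{R}^n)\mid\det(I-M)=0\}$ is a proper real-analytic hypersurface, hence nowhere dense, this full-dimensional family cannot be contained in $Z$; choosing $s$ so that the perturbed matrix lies off $Z$ forces, by analyticity, $H\not\equiv0$ on the corresponding piece, leaving only finitely many zeros there. Doing this on each problematic piece and collecting the finitely many exceptional parameters, a single arbitrarily small generic perturbation makes $H$ vanish identically on no piece at all, with the endpoint $w^1$ kept $\varepsilon$-close to $v$; the continuity and smoothness assertions then follow from Lemma \ref{Lemma_path_a1}. The step I expect to be the main obstacle is precisely this perturbation: one is not free to move $\Phi$ arbitrarily in $GL(\mathbb{R}^n)$ but only within the reachable set, so the argument hinges on upgrading $\Phi_{u^\alpha}(\tau_\alpha,0)\in\mathrm{int}(_{\mathbb{R}}\mathcal{S}^{\hom})$ — itself a consequence of the accessibility rank condition (\ref{LARC}) — to a full-dimensional family of reachable matrices transverse to the hypersurface $Z$, while simultaneously controlling the affine endpoint near $g(v)$.
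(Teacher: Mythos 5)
Your part (ii) is correct and essentially the paper's own argument (continuity of $\alpha\mapsto\Phi_{w^{\alpha}}(\tau_{\alpha},0)$ plus Proposition \ref{Proposition_periodicODE}(i),(iii),(iv)), and your overall strategy for part (i) --- exploit piecewise analyticity of $H(\alpha)=\det(I-\Phi_{u^{\alpha}}(\tau_{\alpha},0))$, so that only pieces with $H\equiv0$ are dangerous, and remove those by a small perturbation --- is in the right spirit. However, the step you yourself flag as the crux contains a genuine error. You argue that $\Phi_{u^{\alpha_{\ast}}}(\tau_{\alpha_{\ast}},0)\in\mathrm{int}(_{\mathbb{R}}\mathcal{S}^{\hom})$ and that the perturbed matrices $\Phi_{u^{\alpha_{\ast}}}(\tau_{\alpha_{\ast}},0)\cdot\Phi_{g_{s}}$ therefore sweep out a ``genuinely $n^{2}$-dimensional'' open subset of $GL(\mathbb{R}^{n})$, which cannot lie inside the hypersurface $Z=\{M\mid\det(I-M)=0\}$. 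All interiors here are taken in the system groups (the paper is explicit: interiors of $\mathcal{S}$ are taken in $\mathcal{G}$, and correspondingly for $_{\mathbb{R}}\mathcal{S}^{\hom}$ in $\mathcal{G}^{\hom}$), and the homogeneous system group $\mathcal{G}^{\hom}$ is in general a proper, possibly very low-dimensional subgroup of $GL(n,\mathbb{R})$: the rank condition (\ref{LARC}) is assumed only for the affine system, \emph{not} for its homogeneous part, and this is not a pathology --- for a linear system ($B_{1}=\cdots=B_{m}=0$, the motivating case at the start of Section \ref{Section7}) one has $\mathcal{G}^{\hom}=\{e^{tA}\mid t\in\mathbb{R}\}$, a one-parameter group, and Example \ref{Example_counter1} also violates the homogeneous rank condition. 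For such systems your family of reachable perturbations is far from open in $GL(\mathbb{R}^{n})$, and a nowhere dense hypersurface can certainly contain a whole low-dimensional family, so the genericity conclusion does not follow as stated. The gap is repairable, but by a different argument: $\mathcal{G}^{\hom}$ is a connected immersed analytic subgroup, $\det(I-\cdot)$ restricted to it is analytic and not identically zero because $\det(I-\Phi_{u}(\sigma,0))\neq0$ by hypothesis, so $Z\cap\mathcal{G}^{\hom}$ is nowhere dense \emph{in} $\mathcal{G}^{\hom}$; combined with nonemptiness of $\mathrm{int}_{\mathcal{G}^{\hom}}(_{\mathbb{R}}\mathcal{S}_{\leq\delta}^{\hom})$ this rescues your insertion scheme. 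But that intrinsic argument, and the distinction between interior in $\mathcal{G}^{\hom}$ and interior in $GL(\mathbb{R}^{n})$, is precisely what is missing.

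For comparison, the paper sidesteps all of this with a more elementary device. Along the explicit path of Lemma \ref{Lemma_path_a1}, the matrix on the $j$-th constant-control segment is $e^{(\alpha-\cdots)A(v^{j})}$ times a fixed matrix, analytic in $\alpha$. Since $\det(I-\Phi_{u}(\sigma,0))\neq0$, the first segment has at most finitely many bad parameters; if the endpoint of a segment is bad, that segment is truncated by an arbitrarily small amount $t_{i}-s_{i}>0$ so that the next segment starts at a parameter where the determinant is nonzero, and the induction proceeds segment by segment. No transversality to $Z$, no interiority of the homogeneous semigroup, and no genericity in any matrix group are needed, and the total truncation keeps $g(w^{1})$ and $\Phi_{w^{1}}(\tau_{1},0)$ within $\varepsilon$ of $g(v)$ and $\Phi_{v}(\tau,0)$.
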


\begin{proof}
(i) First we prove these properties for the $\alpha$-periodic controls
$u^{\alpha}$ constructed in Lemma \ref{Lemma_path_a1}. The proof will proceed
inductively for $j=0,\ldots,\ell-1$ and $t\in\lbrack t_{j},t_{j+1}]$. Since
$1\not \in \mathrm{spec}(\Phi_{u}(\sigma,0)$ it follows that the analytic
function $\alpha\mapsto\det(I-\Phi_{u^{\alpha}}(\tau^{\alpha},0))$ is not
identically $0$. Hence there are at most finitely many $\alpha_{i}\in
\lbrack0,t_{1}]$ with $1\in\mathrm{spec}(\Phi_{u^{\alpha_{i}}}(\tau
^{\alpha_{i}},0)$. If $\det(I-\Phi_{u^{t_{1}}}(\tau_{t_{1}},0))\not =0$,
choose $s_{1}=t_{1}$, otherwise choose $s_{1}<t_{1}$ arbitrarily close to
$t_{1}$ with $\det(I-\Phi_{u^{s_{1}}}(\tau_{s_{1}},0))\not =0$, and define for
$\alpha\in\lbrack\sigma,\sigma+s_{1}]$%
\[
w^{\alpha}(t)=u^{\alpha}(t),\,t\in\lbrack0,\alpha].
\]
Then the $\alpha$-periodic extension of $w^{\alpha}$ satisfies $p_{1}%
(\alpha):=g(w^{\alpha})\in\mathrm{int}(\mathcal{S})$ and $p_{1}^{\hom}%
(\alpha):=\Phi_{w^{\alpha}}(\alpha,0)\in\,_{\mathbb{R}}\mathcal{S}^{\hom}$
with $1\not \in \mathrm{spec}(\Phi_{w^{\alpha}}(\alpha,0))$ for all but at
most finitely many $\alpha\in\lbrack\sigma,\sigma+s_{1}]$. Consider%
\[
I-e^{(\alpha-\sigma-s_{1})A(v^{2})}e^{s_{1}A(v^{1})}\Phi_{u}(\sigma,0)\text{
for }\alpha\in\lbrack\sigma+s_{1},\sigma+t_{2}].
\]
For $\alpha=\sigma+s_{1}$ the determinant of this matrix is unequal zero,
hence it has at most finitely many zeros in $[s_{1},t_{2}]$.

Proceeding in this way up to $j=\ell-1$ one constructs $\alpha$-periodic
controls $w^{\alpha}$ such that $p_{1}(\alpha):=g(w^{\alpha})\in
\mathrm{int}(\mathcal{S})$ and $p_{1}^{\hom}(\alpha):=\Phi_{w^{\alpha}}%
(\alpha,0)\in\,_{\mathbb{R}}\mathcal{S}^{\hom}$ with $1\not \in
\mathrm{spec}(\Phi_{w^{\alpha}}(\alpha,0))$ for all but at most finitely many
$\alpha\in\lbrack\sigma,\sigma+\tau]$. Since $t_{i}-s_{i}>0$ is arbitrarily
small it also follows that
\[
\left\Vert p_{1}^{\hom}(\sigma+\tau)-p^{\hom}(\sigma+\tau)\right\Vert
<\varepsilon,\,\left\Vert p_{1}(\sigma+\tau)-p(\sigma+\tau)\right\Vert
<\varepsilon.
\]
The same constructions as in the proof of Lemma \ref{Lemma_path_a2} can also
be applied here and yield assertion (i).

(ii) Continuous dependence on $\alpha$ of $\Phi_{w^{\alpha}}(\tau_{\alpha},0)$
implies that $1\not \in \mathrm{spec}(\Phi_{w^{\alpha}}(\tau_{\alpha},0)$ for
all $\alpha$ in a neighborhood of $\alpha_{0}$, hence by Proposition
\ref{Proposition_periodicODE}(i) there are unique $\tau_{\alpha}$-periodic
solutions. Since the controls $w^{\alpha}$ depend continuously on $\alpha$ as
elements of $L^{1}([0,1],\mathbb{R}^{m}\dot{)}$ also
\[
A(w^{\alpha}(\cdot))=A+\sum_{i=1}^{m}w_{i}^{\alpha}(\cdot)B_{i}\in
L^{1}(\left[  0,\sigma+\tau\right]  ;\mathbb{R}^{n\times n}),\,Cw^{\alpha
}(\cdot)+d\in L^{1}(\left[  0,\sigma+\tau\right]  ;\mathbb{R}^{n})
\]
depend continuously on $\alpha$. Thus Proposition
\ref{Proposition_periodicODE}(iv) shows that their initial values $x^{\alpha}$
depend continuously on $\alpha$.
\end{proof}

The next two lemmas discuss the periodic solutions when $1$ is in the spectrum.

\begin{lemma}
\label{Lemma_path3}Consider, for $k=0,1,\ldots$, the differential equations%
\[
\dot{x}(t)=A(u^{k}(t))x(t)+Cu^{k}(t)+d,
\]
where $u^{k}$ is $\tau_{k}$-periodic with $\tau_{k}\rightarrow\tau_{0}>0$ and
$u^{k}\rightarrow u^{0}$ in $L^{1}(\left[  0,\tau_{0}+1\right]  ;\mathbb{R}%
^{m})$ with $\left\Vert u_{k}\right\Vert _{\infty}\leq c,k\in\mathbb{N}$, for
some $c>0$, and

(i) the principal fundamental solutions $\Phi_{u^{k}}(t,s)$ of $\dot
{x}(t)=A(u^{k}(t))x(t)$ satisfy $1\in\mathrm{spec}(\Phi_{u^{0}}(\tau_{0},0))$
and $1\not \in \mathrm{spec}(\Phi_{u^{k}}(\tau_{k},0))$ for $k=1,2,\ldots$,

(ii) $\int_{0}^{\tau_{0}}\Phi_{u^{0}}(\tau_{0},s)\left(  Cu^{0}(s)+d\right)
ds\not \in \operatorname{Im}(I-\Phi_{u^{0}}(\tau_{0},0))$.

Then there are unique $\tau_{k}$-periodic solutions for $u^{k}$ and their
initial values $x^{k}$ satisfy%
\begin{equation}
\left\Vert x^{k}\right\Vert \rightarrow\infty\text{ and }\frac{x^{k}%
}{\left\Vert x^{k}\right\Vert }\rightarrow\ker(I-\Phi_{u^{0}}(\tau
_{0},0))=\mathbf{E}(\Phi_{u^{0}}(\tau_{0},0);1)\text{ for }k\rightarrow\infty.
\label{CRS_5.8}%
\end{equation}

\end{lemma}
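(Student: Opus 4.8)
The plan is to apply Proposition \ref{Proposition_periodicODE} as the organizing framework, treating the hypotheses of Lemma \ref{Lemma_path3} as a careful calibration of that proposition's parts (i) and (ii). Since $1\not\in\mathrm{spec}(\Phi_{u^{k}}(\tau_{k},0))$ for each $k\geq 1$, Proposition \ref{Proposition_periodicODE}(i) immediately guarantees existence and uniqueness of a $\tau_k$-periodic solution for each $u^k$, with explicit initial value
\[
x^{k}=(I-\Phi_{u^{k}}(\tau_{k},0))^{-1}\int_{0}^{\tau_{k}}\Phi_{u^{k}}(\tau_{k},s)\left(Cu^{k}(s)+d\right)ds.
\]
So the existence-and-uniqueness half of the conclusion is essentially free; the entire content lies in proving the two asymptotic claims in (\ref{CRS_5.8}).

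\textbf{Controlling the blow-up of the norm.}
First I would establish $\left\Vert x^{k}\right\Vert\rightarrow\infty$. The idea is that the convergence hypotheses ($\tau_k\to\tau_0$, $u^k\to u^0$ in $L^1$ with uniform $L^\infty$ bound $c$) let me invoke Proposition \ref{Proposition_periodicODE}(iii), so $\Phi_{u^{k}}(\tau_{k},0)\to\Phi_{u^{0}}(\tau_{0},0)$ and the numerator integrals converge to $\int_{0}^{\tau_{0}}\Phi_{u^{0}}(\tau_{0},s)(Cu^{0}(s)+d)\,ds$. Because $1\in\mathrm{spec}(\Phi_{u^{0}}(\tau_{0},0))$, the matrix $I-\Phi_{u^{0}}(\tau_{0},0)$ is singular, so $\left\Vert(I-\Phi_{u^{k}}(\tau_{k},0))^{-1}\right\Vert\rightarrow\infty$ as $k\to\infty$. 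The delicate point is that a large inverse norm does not by itself force the \emph{image} of a specific vector to blow up: I must rule out cancellation, using hypothesis (ii), which says precisely that the limiting numerator $\int_{0}^{\tau_{0}}\Phi_{u^{0}}(\tau_{0},s)(Cu^{0}(s)+d)\,ds$ does \emph{not} lie in $\operatorname{Im}(I-\Phi_{u^{0}}(\tau_{0},0))$. Thus the numerator has a nonvanishing component along $\ker((I-\Phi_{u^{0}}(\tau_{0},0))^{\ast})$, which is exactly the direction along which $(I-\Phi_{u^{k}}(\tau_{k},0))^{-1}$ amplifies without bound. Making this rigorous is the heart of the argument: I would decompose $\mathbb{R}^n$ according to the singular value (or eigenspace) structure of $I-\Phi_{u^{0}}(\tau_{0},0)$, and show that the smallest singular value $s_{\min}^k$ of $I-\Phi_{u^{k}}(\tau_{k},0)$ tends to $0$ while the projection of the numerator onto the corresponding left-singular direction stays bounded away from $0$ by (ii). Then $\left\Vert x^{k}\right\Vert\geq \tfrac{1}{s_{\min}^k}\,(\text{bounded-below quantity})\to\infty$.

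\textbf{Identifying the limiting direction.}
For the direction statement $x^{k}/\left\Vert x^{k}\right\Vert\to\ker(I-\Phi_{u^{0}}(\tau_{0},0))$, I would write $y^k:=(I-\Phi_{u^{k}}(\tau_{k},0))x^{k}=\int_{0}^{\tau_{k}}\Phi_{u^{k}}(\tau_{k},s)(Cu^{k}(s)+d)\,ds$, which is a \emph{bounded} sequence by (iii). Dividing by $\left\Vert x^{k}\right\Vert\to\infty$ gives $(I-\Phi_{u^{k}}(\tau_{k},0))\,\tfrac{x^{k}}{\left\Vert x^{k}\right\Vert}=\tfrac{y^k}{\left\Vert x^{k}\right\Vert}\to 0$. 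Passing to a convergent subsequence $x^{k}/\left\Vert x^{k}\right\Vert\to \xi$ with $\left\Vert\xi\right\Vert=1$ and using the matrix convergence from (iii), I obtain $(I-\Phi_{u^{0}}(\tau_{0},0))\xi=0$, i.e. $\xi\in\ker(I-\Phi_{u^{0}}(\tau_{0},0))=\mathbf{E}(\Phi_{u^{0}}(\tau_{0},0);1)$. Since every subsequential limit is a unit vector in this kernel, the distance $d(x^{k}/\left\Vert x^{k}\right\Vert,\,\ker(I-\Phi_{u^{0}}(\tau_{0},0)))\to 0$, giving the second assertion.

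\textbf{Main obstacle.}
The routine pieces are the invocations of Proposition \ref{Proposition_periodicODE}; the genuinely delicate step is the norm blow-up, specifically coupling the degeneration of $I-\Phi_{u^{k}}(\tau_{k},0)$ (its inverse norm exploding) with the transversality hypothesis (ii) in a way that survives the perturbation from $u^0$ to $u^k$. I expect the cleanest route is a singular-value decomposition argument: control the smallest singular value via continuity of eigenvalues/singular values under the matrix convergence of (iii), and verify that the numerator's projection onto the offending left-singular direction converges to a nonzero limit guaranteed by (ii). Keeping track of the subspace along which amplification occurs, and confirming it aligns asymptotically with $\ker(I-\Phi_{u^{0}}(\tau_{0},0))$, is where the care is needed.
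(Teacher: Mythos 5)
Your proposal is correct in outline and its second half (the direction statement) is exactly the paper's argument: divide the fixed-point identity $(I-\Phi_{u^{k}}(\tau_{k},0))x^{k}=\int_{0}^{\tau_{k}}\Phi_{u^{k}}(\tau_{k},s)(Cu^{k}(s)+d)\,ds$ by $\Vert x^{k}\Vert$, use Proposition \ref{Proposition_periodicODE}(iii) and a convergent subsequence of $x^{k}/\Vert x^{k}\Vert$ to land in $\ker(I-\Phi_{u^{0}}(\tau_{0},0))$. Where you genuinely diverge from the paper is the blow-up step, and there the paper is much softer than your SVD plan: it simply assumes $x^{k}$ stays bounded, extracts a convergent subsequence $x^{k}\rightarrow x^{0}$, passes to the limit in the identity above to get $\int_{0}^{\tau_{0}}\Phi_{u^{0}}(\tau_{0},s)(Cu^{0}(s)+d)\,ds=(I-\Phi_{u^{0}}(\tau_{0},0))x^{0}\in\operatorname{Im}(I-\Phi_{u^{0}}(\tau_{0},0))$, contradicting hypothesis (ii). No singular values, no quantitative lower bound, no tracking of amplification directions. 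Your quantitative route does work and buys an explicit growth rate $\Vert x^{k}\Vert\gtrsim 1/s_{\min}^{k}$, but it costs singular-subspace perturbation theory, and as literally stated it has a wrinkle: when $\dim\ker(I-\Phi_{u^{0}}(\tau_{0},0))>1$ the projection of the numerator onto \emph{the} left-singular direction of the smallest singular value need not stay bounded below (that direction can rotate inside the degenerating subspace); you must instead project onto the full span of left-singular vectors whose singular values tend to $0$, which converges to $\ker((I-\Phi_{u^{0}}(\tau_{0},0))^{\top})$ and on which (ii) gives a nonzero limit. That fix is routine but illustrates why the paper's compactness-contradiction argument, which sidesteps all of this, is the cleaner choice; note also that your soft subsequence argument in the direction step could have been recycled verbatim to give the blow-up step, which is in effect what the paper does.
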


\begin{proof}
By Proposition \ref{Proposition_periodicODE}(i) it follows that for every
$k=1,2,\ldots$ there is a unique $\tau_{k}$-periodic solution with initial
value $x^{k}$ satisfying%
\begin{equation}
(I-\Phi_{u^{k}}(\tau_{k},0))x^{k}=\int_{0}^{\tau_{k}}\Phi_{u^{k}}(\tau
_{k},s)\left(  Cu^{k}(s)+d\right)  ds.\label{4.4}%
\end{equation}
If $x^{k}$ remains bounded we may suppose that there is $x^{0}\in
\mathbb{R}^{n}$ with $x^{k}\rightarrow x^{0}$. Since $u^{k}\rightarrow u^{0}$
in $L^{1}(\left[  0,\tau_{0}+1\right]  ;\mathbb{R}^{m})$ it follows that also%
\[
A(u^{k}(\cdot))\rightarrow A(u^{0}(\cdot))\text{ and }Cu^{k}(\cdot
)+d\rightarrow Cu^{0}(\cdot)+d
\]
in $L^{1}(\left[  0,\tau_{0}+1\right]  ;\mathbb{R}^{n\times n})$ and in
$L^{1}(\left[  0,\tau_{0}+1\right]  ;\mathbb{R}^{n})$, resp. Thus, by
Proposition \ref{Proposition_periodicODE}(iii), the right hand sides of
(\ref{4.4}) converge to $\int_{0}^{\tau_{0}}\Phi_{u^{0}}(\tau_{0},s)\left(
Cu^{0}(s)+d\right)  ds$, and one obtains a contradiction to assumption (ii).
This shows that $\left\Vert x^{k}\right\Vert \rightarrow\infty$. Similarly
also the second assertion in (\ref{CRS_5.8}) follows when we divide
(\ref{4.4}) by $\left\Vert x^{k}\right\Vert $.
\end{proof}

The next lemma describes the case where assumption (ii) above is violated.

\begin{lemma}
\label{Lemma_image}Consider for a $\tau_{0}$-periodic control $u^{0}$%
\begin{equation}
\dot{x}(t)=A(u^{0}(t))x(t)+Cu^{0}(t)+d, \label{affine_0}%
\end{equation}
and suppose that the principal fundamental solution of $\dot{x}(t)=A(u^{0}%
(t))x(t)$ satisfies $1\in\mathrm{spec}(\Phi_{u^{0}}(\tau_{0},0))$ and
$\int_{0}^{\tau_{0}}\Phi_{u^{0}}(\tau_{0},s)\left(  Cu^{0}(s)+d\right)
ds=(I-\Phi_{u^{0}}(\tau_{0},0))y^{0}$ for some $y^{0}\in\mathbb{R}^{n}$. Then
the nontrivial affine subspace $Y:=y^{0}+\mathbf{E}(\Phi_{u^{0}}(\tau
_{0},0);1))$ has the property that there is a\ $\tau_{0}$-periodic solution of
(\ref{affine_0}) starting in $y$ if and only if $y\in Y$, and there are
$x^{k}\in Y,k\in\mathbb{N}$, satisfying the conditions in (\ref{CRS_5.8}).
\end{lemma}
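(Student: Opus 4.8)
The plan is to establish Lemma~\ref{Lemma_image} in two halves: first characterizing the set of initial values giving a $\tau_0$-periodic solution as exactly the affine subspace $Y$, and then exhibiting the sequence $x^k \in Y$ satisfying~(\ref{CRS_5.8}). For the first half, I would apply Proposition~\ref{Proposition_periodicODE}(i)'s underlying equation: a point $y$ is the initial value of a $\tau_0$-periodic solution of~(\ref{affine_0}) if and only if the variation-of-constants formula forces $\varphi(\tau_0,y,u^0)=y$, which is precisely
\[
(I-\Phi_{u^0}(\tau_0,0))y = \int_0^{\tau_0}\Phi_{u^0}(\tau_0,s)(Cu^0(s)+d)\,ds.
\]
By the hypothesis this right-hand side equals $(I-\Phi_{u^0}(\tau_0,0))y^0$, so the condition becomes $(I-\Phi_{u^0}(\tau_0,0))(y-y^0)=0$, i.e. $y-y^0\in\ker(I-\Phi_{u^0}(\tau_0,0))=\mathbf{E}(\Phi_{u^0}(\tau_0,0);1)$. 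This is exactly the statement $y\in Y$, and since $1\in\mathrm{spec}(\Phi_{u^0}(\tau_0,0))$ the eigenspace is nontrivial, so $Y$ is a genuine affine subspace of positive dimension.

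For the second half I would construct the sequence $x^k\in Y$ by a direct recipe rather than invoking a perturbation lemma. Fix a unit vector $e\in\mathbf{E}(\Phi_{u^0}(\tau_0,0);1)=\ker(I-\Phi_{u^0}(\tau_0,0))$ and set $x^k:=y^0+k\,e$. By construction each $x^k$ lies in $Y$, so each is the initial value of a $\tau_0$-periodic solution for $u^0$; moreover $\|x^k\|\to\infty$ and
\[
\frac{x^k}{\|x^k\|}=\frac{y^0+k e}{\|y^0+k e\|}\longrightarrow e\in\mathbf{E}(\Phi_{u^0}(\tau_0,0);1)
\]
as $k\to\infty$, which verifies both conditions in~(\ref{CRS_5.8}). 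The essential point here is that unlike Lemma~\ref{Lemma_path3}, where the blow-up is forced by the \emph{absence} of periodic solutions combined with convergence of the data, here the degeneracy $1\in\mathrm{spec}(\Phi_{u^0}(\tau_0,0))$ together with the compatibility condition produces a whole line (or higher-dimensional flat) of periodic initial values lying in $Y$, and one simply runs out to infinity along the eigendirection.

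The mild subtlety I would watch for is ensuring that the vectors $x^k$ genuinely correspond to periodic solutions in the required sense and that the limit direction lands in the kernel rather than merely near it; both are immediate from the explicit formula $x^k=y^0+ke$, since the normalized sequence converges to the exact unit eigenvector $e$. I expect the main conceptual content to be the affine-subspace characterization in the first paragraph, while the construction of the blowing-up sequence is essentially a one-line verification once $Y$ is identified. If one wishes to match the phrasing of Lemma~\ref{Lemma_path3} more closely, one could instead take $x^k$ to be a sequence in $Y$ tending to infinity in an arbitrary admissible direction within $\mathbf{E}(\Phi_{u^0}(\tau_0,0);1)$, but the simplest exhibit suffices for the stated conclusion.
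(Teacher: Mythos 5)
Your proposal is correct and follows essentially the same route as the paper: the paper declares the affine-subspace characterization ``clear by the definitions'' (which you spell out via the fixed-point equation $(I-\Phi_{u^0}(\tau_0,0))y=\int_0^{\tau_0}\Phi_{u^0}(\tau_0,s)(Cu^0(s)+d)\,ds$), and for the second assertion it chooses $x^k:=y^0+kz$ with $0\neq z\in\mathbf{E}(\Phi_{u^0}(\tau_0,0);1)$, exactly your construction. No gaps.
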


\begin{proof}
The first assertion is clear by the definitions. The second assertion follows
by choosing $x^{k}:=y^{0}+kz,k\in\mathbb{N}$, where $0\not =z\in
\mathbf{E}(\Phi_{u^{0}}(\tau_{0},0);1)$.
\end{proof}

\section{Control sets for hyperbolic systems\label{Section5}}

In this section we present definitions of hyperbolicity for affine control
systems and show that hyperbolic systems have a unique control set with
nonvoid interior and that it is bounded.

Since for any $\tau$-periodic control, the homogeneous part (\ref{hom_n}) of
affine system (\ref{affine}) is a homogeneous periodic differential equation,
we can define corresponding Floquet multipliers which are the eigenvalues of
the principal fundamental solution $\Phi_{u}(\tau,0)$; cf. Subsection
\ref{Subsection2.3}.

\begin{definition}
\label{Definition_hyperbolic}An affine system of the form (\ref{affine}) is
hyperbolic if%
\begin{equation}
1\not \in \mathrm{spec}(\Phi_{u}(\tau,0))\text{ for all }\tau\text{-periodic
}u\in\mathcal{U}_{pc}\text{ with }\tau>0\text{, }g(u)\in\mathcal{S}_{\tau}%
\cap\mathrm{int}(\mathcal{S}). \label{hyp}%
\end{equation}
Otherwise it is called nonhyperbolic.
\end{definition}

\begin{remark}
If there is a $\tau$-periodic control $u$ with $g(u)\in\mathcal{S}_{\tau}%
\cap\mathrm{int}(\mathcal{S})$ and $\rho\in\mathrm{spec}(\Phi_{u}(\tau,0))$
with $\rho^{k}=1$ for some $k\in\mathbb{N}$, then the system is nonhyperbolic.
In fact, we may consider $u$ as a $k\tau$-periodic control and find that
$1\in\mathrm{spec}(\Phi_{u}(k\tau,0))$ with $\hat{g}(u)\in\mathcal{S}_{k\tau
}\cap\mathrm{int}(\mathcal{S})$.
\end{remark}

\begin{remark}
If our sufficient condition for the existence of a control set $_{\mathbb{R}%
}D^{\hom}$ of the homogeneous part of (\ref{affine}) holds (cf. Theorem
\ref{Theorem_95}(iii)) there is a Floquet exponent $0=\frac{1}{\tau}%
\log\left\vert \rho\right\vert $ for a Floquet multiplier $\rho\in
\mathrm{spec}(\Phi_{u}(\tau,0))$, hence $\left\vert \rho\right\vert =1$.
According to the preceding remark, the system can only be hyperbolic, if
$\rho$ is not a root of unity.
\end{remark}

Next we show that, for hyperbolic affine systems, there is a unique control
set with nonvoid interior.

\begin{theorem}
\label{Theorem_hyperbolic1}Suppose that the affine system (\ref{affine}) is
hyperbolic. Then there is a unique control set $D$ with nonvoid interior, and
for every $g\in\mathrm{int}(\mathcal{S})$ there is a unique $x\in
\mathbb{R}^{n}$ with $x=gx$ and%
\[
\mathrm{int}(D)=\left\{  x\in\mathbb{R}^{n}\left\vert \text{there is }%
g\in\mathrm{int}(\mathcal{S})\text{ with }x=gx\right.  \right\}  .
\]

\end{theorem}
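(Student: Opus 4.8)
The plan is to establish the characterization of $\mathrm{int}(D)$ first and then use it to derive both uniqueness and the existence of a unique fixed point for each $g\in\mathrm{int}(\mathcal{S})$. The two directions of the set equality come almost directly from Proposition \ref{Proposition2.5}: if $g\in\mathrm{int}(\mathcal{S})$ and $gx=x$, then part (ii) of that proposition places $x$ in the interior of some control set $D$; conversely, if $x\in\mathrm{int}(D)$ for a control set $D$ with nonvoid interior, part (i) furnishes $\tau>0$ and $g\in\mathcal{S}_\tau\cap\mathrm{int}(\mathcal{S}_{\le\tau+1})\subset\mathrm{int}(\mathcal{S})$ with $gx=x$. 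The hyperbolicity hypothesis has not yet been used at this stage; it enters when we want to pin down uniqueness.

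First I would prove that each $g\in\mathrm{int}(\mathcal{S})$ has at most one fixed point. Suppose $g\in\mathcal{S}_\tau\cap\mathrm{int}(\mathcal{S})$ with $gx=x$; by Lemma \ref{Lemma_periodic} the fixed points of $g$ are exactly the initial values of $\tau$-periodic solutions of the corresponding $\tau$-periodic equation in (\ref{affine}), and such a solution is unique precisely when $1\not\in\mathrm{spec}(\Phi_u(\tau,0))$. But $g\in\mathcal{S}_\tau\cap\mathrm{int}(\mathcal{S})$ together with the hyperbolicity condition (\ref{hyp}) gives exactly $1\not\in\mathrm{spec}(\Phi_u(\tau,0))$, so the fixed point is unique. (A minor technical point: one must check that some admissible control $u$ representing $g$ has $g(u)\in\mathcal{S}_\tau\cap\mathrm{int}(\mathcal{S})$ so that (\ref{hyp}) applies; since $g\in\mathrm{int}(\mathcal{S})$ lies in $\mathrm{int}(\mathcal{S}_{\le\tau'})$ for some $\tau'$, and any representing control of a fixed-point element can be taken $\tau$-periodic, this is handled by the discussion preceding Definition \ref{Definition_hyperbolic}.) It remains to guarantee existence of a fixed point, which follows because $g\in\mathrm{int}(\mathcal{S})$, being in $\mathrm{int}(\mathcal{S}_{\le\tau'})$, and hyperbolicity ensures $I-\Phi_u(\tau,0)$ is invertible, so Proposition \ref{Proposition_periodicODE}(i) yields the unique periodic solution with the explicit initial value $x^0=(I-\Phi_u(\tau,0))^{-1}\int_0^\tau\Phi_u(\tau,s)(Cu(s)+d)\,ds$.

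For uniqueness of the control set, suppose $D_1$ and $D_2$ are two control sets with nonvoid interior. Pick $x_1\in\mathrm{int}(D_1)$ and $x_2\in\mathrm{int}(D_2)$; by the characterization just proved, there are $g_1,g_2\in\mathrm{int}(\mathcal{S})$ with $g_ix_i=x_i$. The key is to connect these two fixed points inside $\mathrm{int}(\mathcal{S})$: Lemma \ref{Lemma_path_a2} produces a continuous path in $\mathrm{int}(\mathcal{S})$ from $g_1$ to $g_2$ (after writing them as $g(u),g(v)$ for periodic controls), and Proposition \ref{Proposition_analytic_new} upgrades this to a path along which $1\not\in\mathrm{spec}(\Phi_{w^\alpha}(\tau_\alpha,0))$ for all but finitely many $\alpha$—but hyperbolicity forces $1\not\in\mathrm{spec}$ for \emph{every} $\alpha$, since each $p_1(\alpha)\in\mathrm{int}(\mathcal{S})$. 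Hence by Proposition \ref{Proposition_analytic_new}(ii) the unique periodic solutions have initial values $x^\alpha$ depending continuously on $\alpha$, tracing a continuous path of fixed points from $x_1$ to $x_2$, each lying in $\mathrm{int}$ of some control set. Since control sets are relatively open along this path and any two intersecting control sets coincide (Remark \ref{Remark2.2}), connectedness forces $D_1=D_2$.

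The main obstacle I anticipate is the uniqueness-of-control-set step, specifically verifying that the path of fixed points $x^\alpha$ stays inside the union of interiors of control sets as a \emph{connected} subset. The continuity from Proposition \ref{Proposition_analytic_new}(ii) gives a continuous curve $\alpha\mapsto x^\alpha$, and each $x^\alpha$ lies in $\mathrm{int}(D(\alpha))$ for some control set $D(\alpha)$ by Proposition \ref{Proposition2.5}(ii); the delicate point is a local-constancy argument showing $D(\alpha)$ cannot jump. This uses that $\mathrm{int}(D(\alpha_0))$ is open and that nearby $x^\alpha$ remain in it, so $D(\alpha)=D(\alpha_0)$ locally, whence $D(\alpha)$ is locally constant on the connected interval $[0,1]$ and therefore constant—yielding $D_1=D_2$.
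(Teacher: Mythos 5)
Your proposal is correct and follows essentially the same route as the paper's proof: fixed points of elements of $\mathrm{int}(\mathcal{S})$ exist and are unique via Lemma \ref{Lemma_periodic}, Proposition \ref{Proposition_periodicODE}(i) and hyperbolicity, Proposition \ref{Proposition2.5} gives both inclusions of the characterization, and uniqueness of the control set follows from the path in $\mathrm{int}(\mathcal{S})$ from Lemma \ref{Lemma_path_a2} along which hyperbolicity keeps $1$ out of the spectrum, with continuity of the fixed points $x^{\alpha}$ and a connectedness argument (your local-constancy phrasing is just a restatement of the paper's $\alpha^{\ast}$-supremum argument). The only cosmetic differences are that you connect two arbitrary control sets directly rather than accumulating all fixed points into one set first, and your invocation of Proposition \ref{Proposition_analytic_new}(i) is superfluous since, as you yourself note, hyperbolicity already excludes $1$ from the spectrum along the entire unmodified path.
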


\begin{proof}
Let $g=g(u)\in\mathcal{S}_{\tau}\cap\mathrm{int}(\mathcal{S})$. By
hyperbolicity, $1$ is not an eigenvalue of the principal fundamental solution
$\Phi_{u}(\tau,0)$. Proposition \ref{Proposition_periodicODE}(i) implies that
there is a unique $\tau$-periodic solution starting in some $x\in
\mathbb{R}^{n}$, hence $gx=x$ by Lemma \ref{Lemma_periodic}. By Proposition
\ref{Proposition2.5}(ii) it follows that $x\in\mathrm{int}(D)$ for some
control set $D$. In order to show that $D$ does not depend on $g$, consider
$g,h\in\mathrm{int}(\mathcal{S})$. By Lemma \ref{Lemma_path_a2}, one finds a
continuous path $p$ in $\mathrm{int}(\mathcal{S})$ from $g$ to $h$. For all
$\alpha\in\lbrack0,1]$ hyperbolicity implies that $1\not \in \mathrm{spec}%
(\Phi_{u^{\alpha}}(\tau_{\alpha},0))$ and hence there are unique fixed point
$x^{\alpha}$ of $g(u^{\alpha})$ and a control set $D^{\alpha}$ with
$x^{\alpha}\in\mathrm{int}(D^{\alpha})$. As in Proposition
\ref{Proposition_analytic_new}(ii) it follows that also $x^{\alpha}$ depends
continuously on $\alpha$. Hence for small $\alpha>0$ all points $x^{\alpha}$
are contained in a single control set $D$ showing%
\[
\alpha^{\ast}:=\sup\left\{  \alpha\left\vert x^{\alpha^{\prime}}\in D\text{
for all }\alpha^{\prime}\in\lbrack0,\alpha]\right.  \right\}  >0.
\]
Since $x^{\alpha^{\ast}}\in\mathrm{int}(D^{\alpha^{\ast}})$ it follows from
Remark \ref{Remark2.2} that $D^{\alpha^{\ast}}=D$ which shows that
$x^{\alpha^{\ast}}\in\mathrm{int}(D)$. If $\alpha^{\ast}<1$ this implies that
$D^{\alpha}=D$ for all $\alpha\in\lbrack\alpha^{\ast},\alpha^{\ast
}+\varepsilon]$ for some $\varepsilon>0$ contradicting the definition of
$\alpha^{\ast}$. It follows that $\alpha^{\ast}=1$ and hence there is a single
control set $D$ containing all $x=gx$ for $g\in\mathrm{int}(\mathcal{S})$. The
corresponding periodic controls generate periodic solution which also are
contained in $D$.

It remains to show that~$D$ is the unique control set with nonvoid interior.
By Proposition \ref{Proposition2.5}(i), for a point $x$ in the interior of any
control set, there are $\tau>0$ and $g\in\mathcal{S}_{\tau}\cap\mathrm{int}%
\left(  \mathcal{S}_{\leq\tau+1}\right)  $ with $gx=x$. Hence it follows that
$x\in D$.
\end{proof}

The question arises if the control set $D$ is bounded. We will give a positive
answer provided that the following uniform hyperbolicity condition holds
assuming that the control range $\Omega$ is a compact and convex neighborhood
of the origin in $\mathbb{R}^{m}$ and hence, for system (\ref{hom}), the
control flow $\mathbf{\Psi}$ on $\mathcal{U}\times\mathbb{R}^{n}$ is well
defined (cf. Subsection \ref{Subsection2.1}).

\begin{definition}
\label{Definition_uni_hyperbolic}The homogeneous bilinear system (\ref{hom})
is uniformly hyperbolic if the vector bundle $\mathcal{U}\times\mathbb{R}^{n}$
can be decomposed into the Whitney sum of two invariant subbundles
$\mathcal{V}^{1}$ and $\mathcal{V}^{2}$ such that the restrictions
$\mathbf{\Psi}^{1}$ and $\mathbf{\Psi}^{2}$ of the control flow $\mathbf{\Psi
}$ to $\mathcal{V}^{1}$ and $\mathcal{V}^{2}$, resp., satisfy for constants
$\alpha>0$ and $K\geq1$ and for all $(u,x_{i})\in\mathcal{V}^{i}$%
\begin{align*}
\left\Vert \varphi(t,x_{1},u)\right\Vert  &  =\left\Vert \mathbf{\Psi}_{t}%
^{1}(u,x_{1})\right\Vert \leq Ke^{-\alpha t}\left\Vert x_{1}\right\Vert \text{
for }t\geq0\text{,}\\
\left\Vert \varphi(t,x_{2},u)\right\Vert  &  =\left\Vert \mathbf{\Psi}_{t}%
^{2}(u,x_{2})\right\Vert \leq Ke^{\alpha t}\left\Vert x_{2}\right\Vert \text{
for }t\leq0.
\end{align*}

\end{definition}

\begin{remark}
The uniform hyperbolicity condition is also used in Kawan \cite{Kawa16}, Da
Silva and Kawan \cite{DaSK16}. It is equivalent to the condition that $0$ is
not in the Sacker-Sell spectrum of the linear flow $\mathbf{\Psi}$; cf.
Colonius and Kliemann \cite[Section 5.5]{ColK00}.
\end{remark}

Then, for $i=1,2\,$, one obtains that $\mathcal{V}^{i}(u):=\{x\in
\mathbb{R}^{n}\left\vert (u,x)\in\mathcal{V}^{i}\right.  \}$ is a subspace of
$\mathbb{R}^{n}$ and its dimension is independent of $u\in\mathcal{U}$. For
all $u\in\mathcal{U}$%
\[
\mathbb{R}^{n}=\mathcal{V}^{1}(u)\oplus\mathcal{V}^{2}(u)\text{ and }%
\varphi(t,x_{i},u)\in\mathcal{V}^{i}(u(t+\cdot))\text{ for all }t\in
\mathbb{R},
\]
hence, for $x=x_{1}\oplus x_{2}$ with $x_{i}\in\mathcal{V}^{i}(u)$ and
$\Phi_{u}^{i}(t,s):=\Phi_{u}(t,s)_{\left\vert \mathcal{V}_{i}(u(s+\cdot
))\right.  }$ for $t,s\in\mathbb{R}$,
\[
\varphi(t,x,u)=\varphi(t,x_{1},u)\oplus\varphi(t,x_{2},u)\text{ and }\Phi
_{u}(t,s)=\Phi_{u}^{1}(t,s)+\Phi_{u}^{2}(t,s).
\]
The uniform hyperbolicity condition above implies that system (\ref{affine})
is hyperbolic in the sense of Definition \ref{Definition_hyperbolic} since%
\[
\mathrm{spec}(\Phi_{u}(\tau,0))=\mathrm{spec}(\Phi_{u}^{1}(\tau,0))\cup
\mathrm{spec}(\Phi_{u}^{2}(\tau,0)),
\]
and $\rho\in\mathrm{spec}(\Phi_{u}^{1}(\tau,0))$ implies $\left\vert
\rho\right\vert \leq e^{-\alpha\tau}$, $\rho\in\mathrm{spec}(\Phi_{u}^{2}%
(\tau,0))$ implies $\left\vert \rho\right\vert \geq e^{\alpha\tau}$.

\begin{lemma}
\label{Lemma_bounded}Suppose that the uniform hyperbolicity assumption holds.
Then there is $c>0$ such that for $(u,x_{1})\in\mathcal{V}^{1}$ and
$(u,x_{2})\in\mathcal{V}^{2}$%
\[
\left\Vert \varphi(t,x_{1},u)\right\Vert \leq K\left\Vert x_{1}\right\Vert
+\frac{Kc}{\alpha}\text{ for }t\geq0,\quad\left\Vert \varphi(t,x_{2}%
,u)\right\Vert \leq K\left\Vert x_{2}\right\Vert +\frac{Kc}{\alpha}\text{ for
}t\leq0.
\]

\end{lemma}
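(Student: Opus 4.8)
The statement to prove is Lemma~\ref{Lemma_bounded}, which bounds the norm of trajectories in the two invariant subbundles $\mathcal{V}^1$ and $\mathcal{V}^2$ of the affine system, given the uniform hyperbolicity assumption on the homogeneous part. The key observation is that these bounds concern the \emph{affine} flow $\varphi(t,x_i,u)$, whereas the hypotheses of Definition~\ref{Definition_uni_hyperbolic} control only the \emph{homogeneous} flow, realized through the fundamental solutions $\Phi_u^i(t,s)$. So the plan is to write $\varphi(t,x_i,u)$ via the variation-of-constants (Duhamel) formula in terms of $\Phi_u^i$ and the inhomogeneous term $Cu(s)+d$, and then estimate each piece using the exponential decay/growth bounds that uniform hyperbolicity provides for $\Phi_u^i$.

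\textbf{Main steps for $\mathcal{V}^1$.} First I would record the variation-of-constants formula for the component of the solution in $\mathcal{V}^1$: for $(u,x_1)\in\mathcal{V}^1$ and $t\geq 0$,
\[
\varphi(t,x_1,u)=\Phi_u^1(t,0)x_1+\int_0^t \Phi_u^1(t,s)\bigl(Cu(s)+d\bigr)^1\,ds,
\]
where $(\cdot)^1$ denotes the $\mathcal{V}^1(u(s+\cdot))$-component of the inhomogeneity; this is legitimate because $\mathcal{V}^1$ is invariant under the flow. Next, the uniform hyperbolicity bound applied to the homogeneous flow on $\mathcal{V}^1$ gives $\|\Phi_u^1(t,s)\|\leq K e^{-\alpha(t-s)}$ for $t\geq s$. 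Since $\Omega$ is bounded and $0\in\Omega$, the inhomogeneous term $Cu(s)+d$ is uniformly bounded by some constant $c>0$ independent of $u$ and $s$ (this is the constant $c$ in the statement). Then I estimate
\[
\|\varphi(t,x_1,u)\|\leq K\|x_1\|+\int_0^t K e^{-\alpha(t-s)} c\,ds
= K\|x_1\|+Kc\,\frac{1-e^{-\alpha t}}{\alpha}\leq K\|x_1\|+\frac{Kc}{\alpha},
\]
which is exactly the claimed bound for $t\geq0$. The integral converges precisely because the decay is exponential, so the contribution of the inhomogeneity stays bounded uniformly in $t$.

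\textbf{The symmetric case and the likely obstacle.} For $\mathcal{V}^2$ and $t\leq0$ I would run the mirror-image argument, using the growth bound $\|\Phi_u^2(t,s)\|\leq K e^{\alpha(t-s)}$ for $t\leq s$ (equivalently the decay of $\Phi_u^2$ in backward time), and integrating the Duhamel formula from $t$ up to $0$; the same bounded inhomogeneity yields $\|\varphi(t,x_2,u)\|\leq K\|x_2\|+\frac{Kc}{\alpha}$ for $t\leq0$. The step I expect to require the most care is the bookkeeping in passing from the scalar decay/growth estimates stated for $\mathbf{\Psi}^i$ in Definition~\ref{Definition_uni_hyperbolic} to a uniform operator-norm bound $\|\Phi_u^i(t,s)\|\leq K e^{\mp\alpha(t-s)}$ on the restricted fundamental solutions that is valid for \emph{all} base points $u$ and all times. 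The definition phrases the estimate pointwise in $x_i$, but one needs it as a bound on the propagator between times $s$ and $t$ along the shifted control $u(s+\cdot)$; invariance of the subbundle together with the cocycle property of $\Phi_u$ and the fact that the constants $K,\alpha$ are uniform over the (compact) base $\mathcal{U}$ makes this translation routine but is the only point where the hyperbolicity hypothesis is genuinely used. Once that uniform propagator bound is in hand, both inequalities follow immediately from the Duhamel integral as above.
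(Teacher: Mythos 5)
Your overall strategy is exactly the paper's: write the trajectory by variation of constants, split it along the invariant decomposition $\mathcal{V}^{1}\oplus\mathcal{V}^{2}$, and estimate the two pieces with the exponential bounds for $\Phi_{u}^{1}$ and $\Phi_{u}^{2}$. The point you flag at the end as delicate (turning the pointwise estimates for $\mathbf{\Psi}^{i}$ in Definition \ref{Definition_uni_hyperbolic} into propagator bounds $\Vert\Phi_{u}^{1}(t,s)x\Vert\leq Ke^{-\alpha(t-s)}\Vert x\Vert$ along the shifted control, via invariance and the cocycle property) is indeed routine, and the paper uses it without comment.

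The genuine gap is in your estimate of the inhomogeneous term. You decompose $Cu(s)+d$ into its $\mathcal{V}^{1}(u(s+\cdot))$-component $(Cu(s)+d)^{1}$ and then bound $\Vert(Cu(s)+d)^{1}\Vert$ by the constant $c$ that bounds $\Vert Cu(s)+d\Vert$. That inequality is false in general: the splitting $\mathbb{R}^{n}=\mathcal{V}^{1}(u)\oplus\mathcal{V}^{2}(u)$ is not orthogonal, so the projection $P_{u}$ onto $\mathcal{V}^{1}(u)$ along $\mathcal{V}^{2}(u)$ is an oblique projection whose norm can be much larger than $1$ (when the two subspaces make a small angle, $\Vert P_{u}y\Vert$ can greatly exceed $\Vert y\Vert$). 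The paper avoids this by choosing $c$ so that $\Vert P_{u}\Vert\,\Vert Cv+d\Vert\leq c$ for all $u\in\mathcal{U}$, $v\in\Omega$ -- i.e.\ the projection norms are absorbed into the constant -- and by writing the integrand as $\Phi_{u}^{1}(t,s)P_{u(s+\cdot)}[Cu(s)+d]$ using the commutation relation $P_{u(t+\cdot)}\Phi_{u}(t,s)=\Phi_{u}(t,s)P_{u(s+\cdot)}$. To repair your argument you must do the same, and you also need that $\sup_{u\in\mathcal{U}}\Vert P_{u}\Vert<\infty$; this holds by compactness of $\mathcal{U}$ and continuity of the subbundle decomposition (or can be extracted from the dichotomy estimates themselves, which keep the angle between $\mathcal{V}^{1}(u)$ and $\mathcal{V}^{2}(u)$ uniformly bounded away from zero). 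Since the lemma only asserts the existence of \emph{some} $c>0$, the statement survives with the corrected constant, but as written your chain of inequalities does not follow from the hypotheses.
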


\begin{proof}
Denote the projections of $\mathbb{R}^{n}$ to $\mathcal{V}^{1}(u)$ along
$\mathcal{V}^{2}(u)$ by $P_{u}$ and choose $c>0$ such that $\left\Vert
P_{u}\right\Vert \left\Vert Cv+d\right\Vert \leq c$ for all $u\in
\mathcal{U},v\in\Omega$. By invariance of $\mathcal{V}^{1}$, $P_{u(t+\cdot
)}\Phi_{u}(t,s)=\Phi_{u}(t,s)P_{u(s+\cdot)}$, and hence
\begin{align*}
\varphi(t,x_{1},u)  &  =P_{u(t+\cdot)}\varphi(t,x_{1},u)=P_{u(t+\cdot)}%
\Phi_{u}(t,0)x_{1}+\int_{0}^{t}P_{u(t+\cdot)}\Phi_{u}(t,s)[Cu(s)+d]ds\\
&  =\Phi_{u}^{1}(t,0)x_{1}+\int_{0}^{\tau}\Phi_{u}^{1}(t,s)P_{u(s+\cdot
)}[Cu(s)+d]ds.
\end{align*}
Then it follows for all $u\in\mathcal{U}$ and $t\geq0$ that%
\begin{align*}
\left\Vert \varphi(t,x_{1},u)\right\Vert  &  \leq\left\Vert \Phi_{u}%
^{1}(t,0)x_{1}\right\Vert +\int_{0}^{t}\left\Vert \Phi_{u}^{1}%
(t,s)P_{u(s+\cdot)}[Cu(s)+d]\right\Vert ds\\
&  \leq Ke^{-\alpha t}\left\Vert x_{1}\right\Vert +Kc\int_{0}^{t}%
e^{-\alpha(t-s)}ds\leq K\left\Vert x_{1}\right\Vert +\frac{Kc}{\alpha}.
\end{align*}
The second assertion is shown analogously.
\end{proof}

\begin{theorem}
\label{Theorem_hyperbolic2}Let $\Omega$ be a compact and convex neighborhood
of the origin in $\mathbb{R}^{m}$. Suppose that the homogeneous part
(\ref{hom_n}) of affine system (\ref{affine}) satisfies the uniform
hyperbolicity condition in Definition \ref{Definition_uni_hyperbolic}. Then
the unique control set $D$ with nonvoid interior is bounded.
\end{theorem}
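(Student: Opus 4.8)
The plan is to bound, uniformly, the initial values of the periodic solutions that make up $\mathrm{int}(D)$. Since uniform hyperbolicity implies hyperbolicity, Theorem \ref{Theorem_hyperbolic1} applies: there is a unique control set $D$ with nonvoid interior and every $x\in\mathrm{int}(D)$ satisfies $x=g(u)x$ for some $g(u)\in\mathcal{S}_{\tau}\cap\mathrm{int}(\mathcal{S})$ with a $\tau$-periodic control $u\in\mathcal{U}_{pc}$. By Lemma \ref{Lemma_periodic} the affine solution $\varphi(\cdot,x,u)$ is then $\tau$-periodic, and since $u(\tau+\cdot)=u$ it follows that $\varphi(n\tau,x,u)=x$ for all $n\in\mathbb{N}$. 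I would split $x=x_{1}+x_{2}$ with $x_{i}\in\mathcal{V}^{i}(u)$, writing $x_{1}=P_{u}x$ and $x_{2}=(I-P_{u})x$ for the projections used in the proof of Lemma \ref{Lemma_bounded}. The goal is to bound $\Vert x_{1}\Vert$ and $\Vert x_{2}\Vert$ by a constant independent of $x$, $u$ and $\tau$.

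For the stable component I would apply the projection $P_{u(n\tau+\cdot)}=P_{u}$ to the identity $\varphi(n\tau,x,u)=x$ and use the variation-of-constants formula derived in the proof of Lemma \ref{Lemma_bounded}, which gives
\[
x_{1}=\Phi_{u}^{1}(n\tau,0)x_{1}+\int_{0}^{n\tau}\Phi_{u}^{1}(n\tau,s)P_{u(s+\cdot)}[Cu(s)+d]\,ds .
\]
With $\Vert\Phi_{u}^{1}(n\tau,0)\Vert\leq Ke^{-\alpha n\tau}$ and $\Vert P_{u(s+\cdot)}[Cu(s)+d]\Vert\leq c$ this yields
\[
\Vert x_{1}\Vert\leq Ke^{-\alpha n\tau}\Vert x_{1}\Vert+Kc\int_{0}^{n\tau}e^{-\alpha(n\tau-s)}\,ds\leq Ke^{-\alpha n\tau}\Vert x_{1}\Vert+\frac{Kc}{\alpha}.
\]
Letting $n\rightarrow\infty$ the coefficient $Ke^{-\alpha n\tau}$ tends to $0$, hence $\Vert x_{1}\Vert\leq Kc/\alpha$. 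The symmetric argument on $\mathcal{V}^{2}$ in backward time, using $x_{2}=(I-P_{u})\varphi(-n\tau,x,u)$ and the backward contraction $\Vert\Phi_{u}^{2}(-n\tau,0)\Vert\leq Ke^{-\alpha n\tau}$, gives $\Vert x_{2}\Vert\leq Kc/\alpha$. Therefore $\Vert x\Vert\leq\Vert x_{1}\Vert+\Vert x_{2}\Vert\leq 2Kc/\alpha$ for every $x\in\mathrm{int}(D)$, so $\mathrm{int}(D)$ is bounded; by Remark \ref{Remark2.2}, $D\subset\overline{D}=\overline{\mathrm{int}(D)}$ is bounded as well.

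The main obstacle is that a single period need not contract: as $\tau$ may be small, $\Vert\Phi_{u}^{1}(\tau,0)\Vert\leq Ke^{-\alpha\tau}$ can exceed $1$, so the direct formula $x_{1}=(I-\Phi_{u}^{1}(\tau,0))^{-1}\int_{0}^{\tau}\Phi_{u}^{1}(\tau,s)P_{u(s+\cdot)}[Cu(s)+d]\,ds$ (valid by Proposition \ref{Proposition_periodicODE}(i)) does not furnish a bound independent of $u$ and $\tau$, because $\Vert(I-\Phi_{u}^{1}(\tau,0))^{-1}\Vert$ is not uniformly controlled. Iterating over $n$ periods and passing to the limit converts the asymptotic contraction over long times into the desired uniform estimate, which is the crux of the argument; the remaining steps are the routine variation-of-constants bookkeeping already carried out in Lemma \ref{Lemma_bounded}.
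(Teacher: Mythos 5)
Your proof is correct, but it takes a genuinely different route from the paper's. The paper fixes a reference point $x\in\mathrm{int}(D)$ and, for an arbitrary $y\in\mathrm{int}(D)=\mathcal{O}^{+}(x)\cap\mathcal{O}^{-}(x)$, uses exact controllability to build a single $(t_{1}+t_{2})$-periodic control whose trajectory passes through both $x$ and $y$; since $u(t_{1}+\cdot)=u(-t_{2}+\cdot)$, the fibers $\mathcal{V}^{i}$ at $y$ agree for the forward and backward representations, so the stable component of $y$ is the forward image $\varphi(t_{1},x_{1},u)$ and the unstable component is the backward image $\varphi(-t_{2},x_{2},u)$, and Lemma \ref{Lemma_bounded} bounds both at once --- with a constant that depends on the reference point $x$ through $\Vert x_{1}\Vert,\Vert x_{2}\Vert$. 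You instead invoke the fixed-point characterization of $\mathrm{int}(D)$ from Theorem \ref{Theorem_hyperbolic1} (equivalently Proposition \ref{Proposition2.5}(i)): every point of $\mathrm{int}(D)$ lies on a periodic orbit, and you turn the resulting fixed-point identities for the stable and unstable components under the $n$-fold period map into a uniform bound by letting $n\rightarrow\infty$, so that the factor $Ke^{-\alpha n\tau}$ dies and $\Vert x_{1}\Vert,\Vert x_{2}\Vert\leq Kc/\alpha$. Your version buys an explicit bound $2Kc/\alpha$ that is independent of the point, and you correctly identify the obstacle that makes the naive one-period argument fail ($Ke^{-\alpha\tau}$ may exceed $1$, so $(I-\Phi_{u}^{1}(\tau,0))^{-1}$ is not uniformly controlled); the price is that you need the periodic-point description of $\mathrm{int}(D)$, whereas the paper's argument needs only exact controllability in the interior (Remark \ref{Remark2.2}) and no limiting process. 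Both proofs share the essential machinery: the commutation $P_{u(t+\cdot)}\Phi_{u}(t,s)=\Phi_{u}(t,s)P_{u(s+\cdot)}$, the projected variation-of-constants formula, and the uniform bound on the projections hidden in the choice of $c$ (for your backward estimate one needs the analogous constant for $I-P_{u}$, exactly as in the second, ``analogous'' assertion of Lemma \ref{Lemma_bounded}).
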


\begin{proof}
We show that $\mathrm{int}(D)$ is bounded. This will yield the assertion,
since by Remark \ref{Remark2.2} the accessibility rank condition implies that
$D\subset\overline{\mathrm{int}(D)}$. Fix $x\in\mathrm{int}(D)$ and consider
an arbitrary point $y\in\mathrm{int}(D)=\mathcal{O}^{+}(x)\cap\mathcal{O}%
^{-}(x)$. Thus there are controls $u^{1},u^{2}\in\mathcal{U}$ and times
$t_{1},t_{2}>0$ with $y=\varphi(t_{1},x,u^{1})=\varphi(-t_{2},x,u^{2})$.
Define
\[
u(t)=\left\{
\begin{array}
[c]{lll}%
u^{1}(t) & \text{for} & t\in\lbrack0,t_{1}]\\
u^{2}(t) & \text{for} & t\in(-t_{2},0)
\end{array}
\right.
\]
and extend $u$ to a $t_{1}+t_{2}$-periodic function on $\mathbb{R}$. Then
$u(t_{1}+\cdot)=u(-t_{2}+\cdot)$ in $\mathcal{U}$ implying%
\begin{equation}
\mathcal{V}^{i}(u(t_{1}+\cdot))=\mathcal{V}^{i}(u(-t_{2}+\cdot))\text{ for
}i=1,2. \label{decompose0}%
\end{equation}
We decompose
\[
x=x_{1}\oplus x_{2}\text{ with }x_{i}\in\mathcal{V}^{i}(u)\text{ and }%
y=y_{1}\oplus y_{2}\text{ with }y_{i}\in\mathcal{V}^{i}(u(t_{1}+\cdot))\text{
for }i=1,2.
\]
The invariance of the complementary subbundles $\mathcal{V}^{i}$ together with
(\ref{decompose0}) shows that%
\[
y_{1}=\varphi(t_{1},x_{1},u)=\varphi(-t_{2},x_{1},u)\text{ and }y_{2}%
=\varphi(t_{1},x_{2},u)=\varphi(-t_{2},x_{2},u),
\]
and $y=\varphi(t_{1},x_{1},u)\oplus\varphi(-t_{2},x_{2},u)$. By Lemma
\ref{Lemma_bounded} $\left\Vert \varphi(t_{1},x_{1},u)\right\Vert $ and
$\left\Vert \varphi(-t_{2},x_{2},u)\right\Vert $ and hence all $y\in
\mathrm{int}(D)$ satisfy bounds which are independent of $t_{1}$ and $t_{2}$.
\end{proof}

Next we present a simple example of a\ uniformly hyperbolic affine system.

\begin{example}
Consider the following system with control range $\Omega=[-1,1]$%
\[
\left(
\begin{array}
[c]{c}%
\dot{x}\\
\dot{y}%
\end{array}
\right)  =\left(
\begin{array}
[c]{cc}%
2 & 0\\
0 & -2
\end{array}
\right)  \left(
\begin{array}
[c]{c}%
x\\
y
\end{array}
\right)  +u(t)\left(
\begin{array}
[c]{cc}%
1 & 0\\
0 & 1
\end{array}
\right)  \left(
\begin{array}
[c]{c}%
x\\
y
\end{array}
\right)  +\left(
\begin{array}
[c]{c}%
3\\
3
\end{array}
\right)  u(t)+\left(
\begin{array}
[c]{c}%
3\\
0
\end{array}
\right)  .
\]
For the homogeneous part $A(u)=A+uB$ the exponential growth rates of the $x$-
and the $y$-component are for $u\in\lbrack-1,1]$ given by $\lambda
_{1}(u)=2+u\geq1$ and $\lambda_{2}(u)=-2+u\leq-1$, resp. Thus the system is
uniformly hyperbolic with $\mathcal{V}^{1}=\mathcal{U}\times(\{0\}\times
\mathbb{R})$ and $\mathcal{V}^{2}=\mathcal{U}\times(\mathbb{R}\times\{0\})$.
We claim that the unique control set\ with nonvoid interior is $D=\left(
-2,0\right)  \times\lbrack-1,3]$. For the proof, first observe that the
equilibria are given by%
\[
0=(2+u)x_{u}+3u+3,~0=(-2+u)y_{u}+3u\text{, hence }x_{u}=-\frac{3u+3}%
{2+u}\text{ and }y_{u}=\frac{3u}{2-u}.
\]
The maps $u\mapsto x_{u}$ and $u\mapsto y_{u}$ are monotonically decreasing
and increasing, resp., since $\frac{d}{du}x_{u}<0$ and $\frac{d}{du}y_{u}>0$.
This implies that the set of equilibria is contained in%
\[
\lbrack x_{1},x_{-1}]\times\lbrack y_{-1},y_{1}]=\left[  -2,0\right]
\times\lbrack-1,3].
\]
Inspection of the phase portraits for constant $u$ shows that any control set
is contained in this set, and the phase portraits for $u=-1$ and $u=1$ show
that one can approximately reach (with a combination of these controls) from
any point $(x,y)^{\top}$ in $\left(  -2,0\right)  \times\lbrack-1,3]$ any
other point in this set while this is not possible from points $(-2,y)^{\top}$
and $(0,y)^{\top},y\in\lbrack-1,3]$. This proves the claim.
\end{example}

\section{Affine control systems and projective spaces\label{Section6}}

In this section we construct for affine control systems and their homogeneous
parts induced systems on projective spaces. In order to distinguish explicitly
between control sets and chain control sets referring to the affine system and
its homogenous part, we will mark the latter by the suffix \textquotedblleft%
$\hom$\textquotedblright\ in the rest of this paper.

System (\ref{affine}) and (\ref{hom_n}) can be embedded into a homogeneous
bilinear control system in $\mathbb{R}^{n+1}$ of the form (cf. Elliott
\cite[Subsection 3.8.1]{Elliott})%
\begin{equation}
\left(
\begin{array}
[c]{c}%
\dot{x}(t)\\
\dot{z}(t)
\end{array}
\right)  =\left(
\begin{array}
[c]{cc}%
A & d\\
0 & 0
\end{array}
\right)  \left(
\begin{array}
[c]{c}%
x(t)\\
z(t)
\end{array}
\right)  +\sum_{i=1}^{m}u_{i}(t)\left(
\begin{array}
[c]{cc}%
B_{i} & c_{i}\\
0 & 0
\end{array}
\right)  \left(
\begin{array}
[c]{c}%
x(t)\\
z(t)
\end{array}
\right)  . \label{hom_n+1}%
\end{equation}
Denote the solutions of (\ref{hom_n+1}) with initial condition
$(x(0),z(0))=(x^{0},z^{0})\in\mathbb{R}^{n}\times\mathbb{R}$ by $\psi
(t,\left(  x^{0},z^{0}\right)  ,u),t\in\mathbb{R}$. For initial values of the
form $(x^{0},1)\in\mathbb{R}^{n+1}$ one finds%
\begin{equation}
\psi(t,\left(  x^{0},1\right)  ,u)=\left(  \varphi(t,x^{0},u),1\right)  \text{
in }\mathbb{R}^{n+1}, \label{hom1}%
\end{equation}
and for initial values of the form $(x^{0},0)\in\mathbb{R}^{n+1}$ one finds
\begin{equation}
\psi(t,\left(  x^{0},0\right)  ,u)=\left(  \varphi_{\hom}(t,x^{0},u),0\right)
\text{ in }\mathbb{R}^{n+1}. \label{hom0}%
\end{equation}
Thus the trajectories (\ref{hom1}) and (\ref{hom0}) are copies of the
trajectories of (\ref{affine}) and of its homogeneous part (\ref{hom_n}),
resp., obtained by adding a trivial $\left(  n+1\right)  $st. component. An
immediate consequence is the following proposition.

\begin{proposition}
\label{Proposition_control}(i) A subset $D\subset\mathbb{R}^{n}$ is a control
set of (\ref{affine}) if and only if the set $D^{1}:=\{(x,1)\left\vert x\in
D\right.  \}$ is a control set of (\ref{hom_n+1}) in $\mathbb{R}%
^{n+1}\setminus\{0\}$.

(ii) A subset $_{\mathbb{R}}D^{\hom}\subset\mathbb{R}^{n}\setminus\{0\}$ is a
control set of (\ref{hom_n}) if and only if the set $D^{0}:=\{(x,0)\left\vert
x\in\,_{\mathbb{R}}D^{\hom}\right.  \}$ is a control set of (\ref{hom_n+1}) in
$\mathbb{R}^{n+1}\setminus\{0\}$.
\end{proposition}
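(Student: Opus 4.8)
The plan is to prove both equivalences by directly transferring the defining properties of a control set through the embedding, exploiting the trajectory correspondences (\ref{hom1}) and (\ref{hom0}). The central observation is that the maps $x\mapsto(x,1)$ and $x\mapsto(x,0)$ are homeomorphisms onto the affine hyperplanes $\{z=1\}$ and $\{z=0\}$ in $\mathbb{R}^{n+1}$, and by (\ref{hom1})--(\ref{hom0}) they intertwine the flow $\varphi$ (resp.\ $\varphi_{\hom}$) with the flow $\psi$ restricted to the appropriate hyperplane. Since these hyperplanes are invariant under $\psi$ (the $z$-component satisfies $\dot z=0$, so $z$ is constant along every trajectory), reachable sets, closures, and interiors relative to the hyperplane correspond under the homeomorphism.

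For part (i) I would verify the three conditions of Definition~\ref{Definition_control_sets} one at a time. If $D$ is a control set of (\ref{affine}), then condition (i) for $D^{1}$ follows immediately: a control $u$ keeping $\varphi(t,x,u)\in D$ keeps $\psi(t,(x,1),u)=(\varphi(t,x,u),1)\in D^{1}$. For condition (ii), $D\subset\overline{\mathcal{O}^{+}(x)}$ in $\mathbb{R}^{n}$ translates, via the homeomorphism, to $D^{1}\subset\overline{\mathcal{O}^{+}((x,1))}$ where the closure and reachable set are computed within the invariant hyperplane $\{z=1\}$; since that hyperplane is invariant, approximate reachability within it coincides with approximate reachability in $\mathbb{R}^{n+1}$. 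The maximality condition (iii) is the point requiring the most care: I must argue that no larger set in $\mathbb{R}^{n+1}\setminus\{0\}$ satisfying (i) and (ii) can strictly contain $D^{1}$. Here the invariance of $\{z=1\}$ under $\psi$ is decisive: any point satisfying condition (i) must remain in $\{z=1\}$ for all forward time, and more strongly, condition (ii) forces any two points of a control set to be mutually approximately reachable, which forces them to share the same constant $z$-value; thus any control set of (\ref{hom_n+1}) meeting $\{z=1\}$ lies entirely in $\{z=1\}$ and hence is of the form $\tilde D^{1}$ for some $\tilde D\subset\mathbb{R}^{n}$, reducing maximality in $\mathbb{R}^{n+1}$ to maximality of $D$ in $\mathbb{R}^{n}$. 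The converse direction reverses each of these steps.

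Part (ii) is entirely analogous, using (\ref{hom0}) and the invariant hyperplane $\{z=0\}$ in place of $\{z=1\}$; the only modification is that one works in $\mathbb{R}^{n}\setminus\{0\}$ (equivalently $\{z=0\}\setminus\{0\}$) rather than all of $\mathbb{R}^{n}$, which is harmless since $0$ is excluded throughout and the flow $\varphi_{\hom}$ preserves $\mathbb{R}^{n}\setminus\{0\}$ by homogeneity.

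The step I expect to be the main obstacle is the maximality argument, specifically the claim that a control set of (\ref{hom_n+1}) cannot spread across different values of the $z$-coordinate. The clean way to nail this is to note that $z$ is a first integral of (\ref{hom_n+1}), so along any trajectory $z(t)\equiv z(0)$; consequently if $x'$ and $x''$ lie in a common control set $D'$ of (\ref{hom_n+1}), then by condition (ii) each is approximately reachable from the other, and since reachability preserves $z$ exactly while approximate reachability preserves it up to arbitrarily small error, one concludes the $z$-coordinates agree. This confines every control set of (\ref{hom_n+1}) to a single level set of $z$, at which point the homeomorphism delivers the correspondence and transfers maximality in both directions. Everything else is a routine unwinding of definitions through the two identities (\ref{hom1}) and (\ref{hom0}).
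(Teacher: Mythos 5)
Your proof is correct and takes essentially the same route the paper intends: the paper offers no written proof, stating the proposition as an ``immediate consequence'' of the trajectory identities (\ref{hom1}) and (\ref{hom0}), and your argument is exactly the unwinding of Definition \ref{Definition_control_sets} through those identities that makes it immediate. Your treatment of maximality---noting that $z$ is a first integral, so that condition (ii) of the definition confines any set satisfying it to a single level $\{z=\mathrm{const}\}$, which reduces maximality in $\mathbb{R}^{n+1}\setminus\{0\}$ to maximality in the hyperplane---correctly settles the only point that is not pure bookkeeping.
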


Next we discuss associated systems in projective spaces. Recall that
$\mathbb{P}^{n-1}=(\mathbb{R}^{n}\setminus\{0\})/\thicksim$, where $\thicksim$
is the equivalence relation $x\thicksim y$ if $y=\lambda x$ with some
$\lambda\not =0$. An atlas of $\mathbb{P}^{n-1}$ is given by $n$ charts
$(U_{i},\psi_{i})$, where $U_{i}$ is the set of equivalence classes
$[x_{1}:\cdots:x_{n}]$ with $x_{i}\not =0$ (using homogeneous coordinates) and
$\psi_{i}:U_{i}\rightarrow\mathbb{R}^{n-1}$ is defined by%
\[
\psi_{i}([x_{1}:\cdots:x_{n}])=\left(  \frac{x_{1}}{x_{i}},\ldots
,\widehat{\frac{x_{i}}{x_{i}}},\ldots,\frac{x_{n}}{x_{i}}\right)  ;
\]
here the hat means that the $i$-th entry is missing. Denote by $\pi
_{\mathbb{P}}$ both projections $\mathbb{R}^{n}\rightarrow\mathbb{P}^{n-1}$
and $\mathbb{R}^{n+1}\rightarrow\mathbb{P}^{n}$. A metric on $\mathbb{P}^{n}$
is given by defining for elements $p_{1}=\pi_{\mathbb{P}}x,p_{2}%
=\pi_{\mathbb{P}}y$%
\begin{equation}
d(p_{1},p_{2})=\min\left\{  \left\Vert \frac{x}{\left\Vert x\right\Vert
}-\frac{y}{\left\Vert y\right\Vert }\right\Vert ,\left\Vert \frac
{x}{\left\Vert x\right\Vert }+\frac{y}{\left\Vert y\right\Vert }\right\Vert
\right\}  . \label{metric_P}%
\end{equation}
Projecting the homogeneous bilinear control system (\ref{hom_n+1}) in
$\mathbb{R}^{n+1}$ to $\mathbb{P}^{n}$ one obtains the following system given
in homogeneous coordinates by%
\begin{equation}
\left[
\begin{array}
[c]{c}%
\dot{x}(t)\\
\dot{z}(t)
\end{array}
\right]  =\left(  \left(
\begin{array}
[c]{cc}%
A & d\\
0 & 0
\end{array}
\right)  +\sum_{i=1}^{m}u_{i}(t)\left(
\begin{array}
[c]{cc}%
B_{i} & c_{i}\\
0 & 0
\end{array}
\right)  \right)  \left[
\begin{array}
[c]{c}%
x(t)\\
z(t)
\end{array}
\right]  . \label{Pn}%
\end{equation}
Projective space $\mathbb{P}^{n}$ can be written as the disjoint union
$\mathbb{P}^{n}=\mathbb{P}^{n,1}\dot{\cup}\mathbb{P}^{n,0}$, where, in
homogeneous coordinates, the levels $\mathbb{P}^{n,i}$ are given by%
\[
\mathbb{P}^{n,i}:=\left\{  [x_{1}:\cdots:x_{n}:i]\left\vert (x_{1}%
,\ldots,x_{n})\in\mathbb{R}^{n}\right.  \right\}  \text{ for }i=0,1.
\]
Observe that, by homogeneity, $\mathbb{P}^{n,0}=\left\{  [x_{1}:\cdots
:x_{n}:0]\left\vert \text{ }\left\Vert (x_{1},\ldots,x_{n})\right\Vert
=1\right.  \right\}  $. Any trajectory of system (\ref{Pn}) is obtained as the
projection of a trajectory of (\ref{hom_n+1}) with initial condition
satisfying $z^{0}=0$ or $1$, since any initial value $[x_{1}^{0}:\cdots
:x_{n}^{0}:z^{0}]$ with $z^{0}\not =0$ coincides with $[\frac{x_{1}^{0}}%
{z^{0}}:\cdots:\frac{x_{n}^{0}}{z^{0}}:1]$.

Loosely speaking, $\mathbb{P}^{n,0}$ is projective space $\mathbb{P}^{n-1}$
(embedded into $\mathbb{P}^{n}$) and $\mathbb{P}^{n,1}$ is $\mathbb{P}^{n}$
without $\mathbb{P}^{n-1}$. In fact, as noted above, an atlas of
$\mathbb{P}^{n}$ is given by $n+1$ charts $(U_{i},\psi_{i})$. A trivial atlas
for $\mathbb{P}^{n,1}$ is given by $\left\{  (U_{n+1},\psi_{n+1})\right\}  $
proving that $\mathbb{P}^{n,1}$ is a manifold which is diffeomorphic to
$\mathbb{R}^{n}$. The space $\mathbb{P}^{n,0}$ is closed in $\mathbb{P}^{n}$,
and the spaces $\mathbb{P}^{n-1}$ and $\mathbb{P}^{n,0}$ are diffeomorphic
under the map%
\begin{equation}
e:\mathbb{P}^{n-1}\rightarrow\mathbb{P}^{n,0}:[x_{1}:\cdots:x_{n}%
]\mapsto\lbrack x_{1}:\cdots:x_{n}:0]. \label{e}%
\end{equation}
For any trajectory $\psi(t,\left(  x^{0},1\right)  ,u)=\left(  \varphi
_{1}(t,x^{0},u),\ldots,\varphi_{n}(t,x^{0},u),1\right)  ^{\top}$ of system
(\ref{hom_n+1}) in $\mathbb{R}^{n+1}\setminus\{0\}$, the projection to
$\mathbb{P}^{n,1}\subset\mathbb{P}^{n}$ is $[\varphi_{1}(t,x^{0}%
,u):\cdots:\varphi_{n}(t,x^{0},u):1]$.

The proof of the following proposition is straightforward and we omit it.

\begin{proposition}
\label{Proposition_general}Consider in $\mathbb{R}^{n}$ the affine control
system (\ref{affine}), its homogeneous part (\ref{hom_n}), and in
$\mathbb{R}^{n+1}$ the homogeneous bilinear control system (\ref{hom_n+1}) as
well as the system in $\mathbb{P}^{n-1}$ induced by (\ref{hom_n}) and the
system (\ref{Pn}) in $\mathbb{P}^{n}$ induced by (\ref{hom_n+1}).

(i) Every control set $D\subset\mathbb{R}^{n}$ of the affine system
(\ref{affine}) yields a control set $\pi_{\mathbb{P}}D^{1}$ of the system
(\ref{Pn}) in $\mathbb{P}^{n}$ via the map%
\[
(x_{1},\ldots,x_{n})\longmapsto\lbrack x_{1}:\cdots:x_{n}:1]:\mathbb{R}%
^{n}\rightarrow\mathbb{P}^{n,1}\subset\mathbb{P}^{n}.
\]
Furthermore, $D$ is an invariant control set if and only if $\pi_{\mathbb{P}%
}D^{1}$ is an invariant control set. The control set $D$ is unbounded if and
only $\partial\left(  \pi_{\mathbb{P}}D^{1}\right)  \cap\mathbb{P}^{n,0}%
\not =\varnothing$. More precisely, if $x^{k}\in D$ with $\left\Vert
x^{k}\right\Vert \rightarrow\infty$, then every cluster point $y$ of
$\frac{x^{k}}{\left\Vert x^{k}\right\Vert }$ satisfies, in homogeneous
coordinates,
\[
\lbrack x_{1}^{k_{i}}:\cdots:x_{n}^{k_{i}}:1]\rightarrow\lbrack y_{1}%
:\cdots:y_{n}:0]\in\mathbb{P}^{n,0}\text{ for a subsequence }k_{i}%
\rightarrow\infty\text{.}%
\]

(ii) Every control set $_{\mathbb{P}}D^{\hom}$ and every chain control set
$_{\mathbb{P}}E^{\hom}$ of the system in $\mathbb{P}^{n-1}$ induced by
(\ref{hom_n}) corresponds to a unique control set $e(_{\mathbb{P}}D^{\hom})$
and chain control set $e(_{\mathbb{P}}E^{\hom})$, resp., of the system
(\ref{Pn}) restricted to $\mathbb{P}^{n,0}$ and conversely via the map $e$.
The invariant control sets in $\mathbb{P}^{n-1}$ correspond to the invariant
control sets in $\mathbb{P}^{n,0}$.
\end{proposition}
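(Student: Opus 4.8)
The plan is to reduce everything to the single structural fact that in (\ref{hom_n+1}) the last coordinate satisfies $\dot z\equiv0$, so $z$ is constant along every solution. Projected to $\mathbb{P}^n$, this makes both the open set $\mathbb{P}^{n,1}$ and the closed set $\mathbb{P}^{n,0}$ invariant under the flow of (\ref{Pn}), in forward and backward time and for every control. Hence for a point in either level the entire orbit stays in that level, so the reachable sets computed in the restricted systems agree with those computed in $\mathbb{P}^n$. I would then deduce both assertions by transporting control sets and chain control sets across the diffeomorphisms $\iota\colon x\mapsto[x_1:\cdots:x_n:1]$ onto $\mathbb{P}^{n,1}$ and $e$ of (\ref{e}) onto $\mathbb{P}^{n,0}$; working directly on $\mathbb{P}^{n,1}$ this way avoids the delicate sphere-to-projective correspondence of Theorem \ref{Theorem_95}.

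For (i) I would first note, using (\ref{hom1}), that $\iota$ conjugates the affine system (\ref{affine}) with the restriction of (\ref{Pn}) to $\mathbb{P}^{n,1}$ over the same family of admissible controls, and that $\pi_{\mathbb{P}}D^1=\iota(D)$. A conjugacy preserves properties (i), (ii) of Definition \ref{Definition_control_sets} and maximality within the invariant subset, so $D$ is a control set of (\ref{affine}) exactly when $\iota(D)$ is a control set of the system restricted to $\mathbb{P}^{n,1}$. The step needing genuine care --- the main obstacle --- is upgrading maximality from $\mathbb{P}^{n,1}$ to all of $\mathbb{P}^n$: I would argue that any $E'\supset\iota(D)$ satisfying (i), (ii) in $\mathbb{P}^n$ cannot meet $\mathbb{P}^{n,0}$, since a point $p\in E'\cap\mathbb{P}^{n,0}$ would force $E'\subset\overline{\mathcal{O}^+(p)}\subset\mathbb{P}^{n,0}$ by invariance and closedness of $\mathbb{P}^{n,0}$, contradicting $\varnothing\neq\iota(D)\subset\mathbb{P}^{n,1}$; hence $E'\subset\mathbb{P}^{n,1}$ and $E'=\iota(D)$. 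The invariance claim then follows from the conjugacy once one verifies the closure bookkeeping $\overline{S}^{\mathbb{P}^n}=\overline{\overline{S}^{\mathbb{P}^{n,1}}}^{\mathbb{P}^n}$ for $S\subset\mathbb{P}^{n,1}$, so that the identity $\overline{D}=\overline{\mathcal{O}^+(x)}$ in $\mathbb{R}^n$ transports to $\mathbb{P}^n$, and conversely.

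For the unboundedness equivalence I would write the normalized homogeneous representative of $[x:1]$ and take limits. If $x^k\in D$ with $\|x^k\|\rightarrow\infty$, then along a subsequence with $x^{k_i}/\|x^{k_i}\|\rightarrow y$ (so $\|y\|=1$) one gets $[x_1^{k_i}:\cdots:x_n^{k_i}:1]\rightarrow[y_1:\cdots:y_n:0]\in\mathbb{P}^{n,0}$, a boundary point of $\pi_{\mathbb{P}}D^1$ lying in $\mathbb{P}^{n,0}$; conversely, a boundary point $[y:0]\in\mathbb{P}^{n,0}$ approximated by $[x^k:1]$ forces the last homogeneous coordinate to vanish, i.e. $\|x^k\|\rightarrow\infty$, so $D$ is unbounded. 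This simultaneously establishes the displayed refinement.

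For (ii) the argument is the same conjugation, now via $e$: by (\ref{hom0}) the diffeomorphism $e$ conjugates the system induced on $\mathbb{P}^{n-1}$ with the restriction of (\ref{Pn}) to $\mathbb{P}^{n,0}$, which transports control sets and invariant control sets directly (here both spaces are compact, so no closure subtleties arise). The only additional point is chain control sets: since $\mathbb{P}^{n-1}$ and $\mathbb{P}^{n,0}$ are compact, $e$ and $e^{-1}$ are uniformly continuous, so an $(\varepsilon,T)$-chain maps to an $(\varepsilon',T)$-chain with $\varepsilon'\rightarrow0$ as $\varepsilon\rightarrow0$; chain controllability is thus preserved in both directions and the maximal chain controllable sets correspond under $e$, yielding the stated bijection between control sets, invariant control sets, and chain control sets.
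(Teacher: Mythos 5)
Your proof is correct, and it cannot diverge from the paper's argument because the paper explicitly omits this proof as ``straightforward''; what you supply is precisely the intended argument: invariance of the levels $\mathbb{P}^{n,1}$ and $\mathbb{P}^{n,0}$ (since $\dot z\equiv 0$), conjugacy via $\iota$ and $e$ from (\ref{hom1}), (\ref{hom0}), and the normalization-of-homogeneous-coordinates limit for the unboundedness statement. The two points of genuine substance --- upgrading maximality of $\iota(D)$ from $\mathbb{P}^{n,1}$ to all of $\mathbb{P}^{n}$ by showing a larger candidate set cannot meet the closed invariant set $\mathbb{P}^{n,0}$, and the closure bookkeeping $\overline{S}^{\mathbb{P}^{n}}=\overline{\overline{S}^{\mathbb{P}^{n,1}}}^{\mathbb{P}^{n}}$ needed to transport the invariance property $\overline{D}=\overline{\mathcal{O}^{+}(x)}$ --- are exactly the ones that required care, and you handle both correctly.
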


We remark that the assertions in Proposition \ref{Proposition_general}(ii)
also hold, if the accessibility rank condition in $\mathbb{P}^{n-1}$ is
violated (this is the case in Example \ref{Example_counter1}).

The intersection $\partial\left(  \pi_{\mathbb{P}}D^{1}\right)  \cap
\mathbb{P}^{n,0}$ will be of relevance below. Hence we give it a suggestive name.

\begin{definition}
\label{Definition_infinity}For a control set $D\subset\mathbb{R}^{n}$ with
associated control set $\pi_{\mathbb{P}}D^{1}$ in $\mathbb{P}^{n,1}$ the set
$\partial_{\infty}(D):=\partial\left(  \pi_{\mathbb{P}}D^{1}\right)
\cap\mathbb{P}^{n,0}$ is the boundary at infinity of $D$.
\end{definition}

Proposition \ref{Proposition_general}(i) shows, in particular, that the
boundary at infinity $\partial_{\infty}(D)$ is nonvoid if and only if $D$ is unbounded.

\begin{remark}
The construction of the boundary at infinity of a control set bears some
similarity to the ideal boundary used by Firer and do Rocio \cite{FidoR03} in
the analysis of invariant control sets for sub-semigroups of a semisimple Lie group.
\end{remark}

Next we clarify the relations between the accessibility rank conditions on the
relevant spaces.

\begin{theorem}
\label{Theorem_submanifolds}(i) If the accessibility rank condition holds for
affine system (\ref{affine}) on $\mathbb{R}^{n}$, then it also holds for the
system on the submanifold $\mathbb{P}^{n,1}\subset\mathbb{P}^{n}$ induced by
the bilinear system (\ref{hom_n+1}) on $\mathbb{R}^{n+1}$.

(ii) If the accessibility rank condition holds for the system on
$\mathbb{P}^{n-1}$ induced by the homogeneous part (\ref{hom_n}) of system
(\ref{affine}), then it holds for the system on the invariant submanifold
$\mathbb{P}^{n,0}\subset\mathbb{P}^{n}$ induced by the bilinear system
(\ref{hom_n+1}) on $\mathbb{R}^{n+1}$.
\end{theorem}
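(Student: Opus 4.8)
The plan is to exploit that both $\mathbb{P}^{n,1}$ and $\mathbb{P}^{n,0}$ are invariant submanifolds for the flow of (\ref{Pn}), on each of which the induced control system is diffeomorphic to a system whose accessibility rank condition is already available. Invariance is immediate from (\ref{hom_n+1}): the last row of every coefficient matrix vanishes, so $z(t)$ is constant along every solution in $\mathbb{R}^{n+1}$; hence $\{z\neq 0\}$ and $\{z=0\}$ are invariant and project to the invariant sets $\mathbb{P}^{n,1}$ and $\mathbb{P}^{n,0}$. Since a diffeomorphism intertwines Lie brackets and has an isomorphism as differential, the accessibility rank condition is a diffeomorphism invariant; moreover, vector fields tangent to an invariant submanifold have tangent brackets, so the Lie algebra of the restricted system equals the restriction of the ambient one. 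Thus it suffices to identify the two induced systems explicitly.

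For (i), recall that $\mathbb{P}^{n,1}=U_{n+1}$ and that $\psi_{n+1}$ is a global chart providing a diffeomorphism onto $\mathbb{R}^{n}$ with inverse $\iota(y)=[y_{1}:\cdots:y_{n}:1]$. I compute the induced vector fields in this chart. A linear field $\dot{X}=MX$ on $\mathbb{R}^{n+1}$ with $X=(x,z)^{\top}$ and $M=\left(\begin{smallmatrix}N & e\\0 & 0\end{smallmatrix}\right)$ satisfies $\dot{z}=0$, so in the coordinates $y_{j}=x_{j}/z$ one obtains
\[
\dot{y}_{j}=\frac{\dot{x}_{j}}{z}-y_{j}\frac{\dot{z}}{z}=\frac{(Nx+ez)_{j}}{z}=(Ny+e)_{j}.
\]
Applying this with $(N,e)=(A,d)$ and $(N,e)=(B_{i},c_{i})$ shows that the induced vector fields on $\mathbb{P}^{n,1}$ read $y\mapsto Ay+d$ and $y\mapsto B_{i}y+c_{i}$ in the chart, i.e., they are exactly $f_{0},f_{1},\ldots,f_{m}$ of (\ref{affine}). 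Hence $\iota$ conjugates (\ref{affine}) with the system (\ref{Pn}) restricted to $\mathbb{P}^{n,1}$, and the hypothesis (\ref{LARC}) immediately yields the accessibility rank condition on $\mathbb{P}^{n,1}$.

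For (ii), restrict (\ref{hom_n+1}) to the invariant subspace $\mathbb{R}^{n}\times\{0\}$: there every coefficient matrix acts as $(x,0)\mapsto(Ax,0)$ and $(x,0)\mapsto(B_{i}x,0)$, so the restricted system is precisely the homogeneous part (\ref{hom_n}). Its projection to $\pi_{\mathbb{P}}(\mathbb{R}^{n}\times\{0\})=\mathbb{P}^{n,0}$ coincides, under the diffeomorphism $e$ of (\ref{e}), with the projectivized homogeneous system on $\mathbb{P}^{n-1}$. Thus $e$ conjugates the latter with the system induced by (\ref{hom_n+1}) on $\mathbb{P}^{n,0}$, and the assumed accessibility rank condition on $\mathbb{P}^{n-1}$ transfers. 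The only genuine content is the chart computation together with the identification of the restricted systems; the point requiring care is that passing to an invariant submanifold neither enlarges nor shrinks the relevant Lie algebra, which is guaranteed by tangency of the brackets combined with the diffeomorphism invariance of the dimension. Everything else is bookkeeping.
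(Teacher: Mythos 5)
Your proposal is correct and follows essentially the route the paper intends: the paper's proof is only a sketch that defers to the local coordinate description of projected linear vector fields (Bacciotti--Vivalda), and your chart computation on $U_{n+1}$ (where $\dot z=0$ kills the quadratic Riccati terms, leaving exactly $f_{0},f_{1},\ldots,f_{m}$) together with the conjugation by $e$ on $\mathbb{P}^{n,0}$ is precisely that argument carried out in detail. The supporting facts you invoke (diffeomorphism invariance of the rank condition, tangency of brackets on invariant submanifolds) are standard and correctly applied.
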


\begin{proof}
The proof is based on the local coordinate description of vector fields in
projective space obtained by projection of linear vector fields; cf. Bacciotti
and Vivalda \cite[Section 4]{BacV13}. We omit the details.
\end{proof}

\section{Control sets for nonhyperbolic systems\label{Section7}}

This section shows that all control sets with nonvoid interior are unbounded
if the hyperbolicity condition specified in Definition
\ref{Definition_hyperbolic} is violated. Using the compactification of the
state space constructed in the previous section, we show that there is a
single chain control set in $\mathbb{P}^{n}$ containing the images of all
control sets $D$ with nonvoid interior in $\mathbb{R}^{n}$ and the boundary at
infinity of this chain control set contains all chain control sets of the
homogeneous part having nonvoid intersection with the boundary at infinity of
one of the control sets $D$.

We begin with the following motivation. Consider a linear control system
\begin{equation}
\dot{x}(t)=Ax(t)+Bu(t),\quad u(t)\in\Omega, \label{linear}%
\end{equation}
where the control range $\Omega\subset\mathbb{R}^{m}$ is a compact convex
neighborhood of the origin. This is a special case of system (\ref{affine})
for $B_{1}=\cdots=B_{m}=0$ and $d=0$. We assume that the system without
control restriction is controllable. By Colonius and Kliemann \cite[Example
3.2.16]{ColK00} there is a unique control set $D$ with nonvoid interior, and
$0\in\mathrm{int}(D)$. Let $\mathbf{GE}(A;\mu)$ denote the real generalized
eigenspace for an eigenvalue $\mu$ of $A$. Then%
\begin{equation}
\mathbf{E}^{0}\subset D\subset\overline{K}+\mathbf{E}^{0}+F,
\label{linear_control__set}%
\end{equation}
where $\mathbf{E}^{0}:=\bigoplus_{\operatorname{Re}\mu=0}\mathbf{GE}(A;\mu)$
is the central spectral subspace and the sets $K\subset\mathbf{E}%
^{+}:=\bigoplus_{\operatorname{Re}\mu<0}\mathbf{GE}(A;\mu)$ and $F\subset
\mathbf{E}^{-}:=\bigoplus_{\operatorname{Re}\mu>0}\mathbf{GE}(A;\mu)$ are
bounded. This follows from Sontag \cite[Corollary 3.6.7]{Son98} showing that
$\mathcal{O}^{+}(0)=K+\mathbf{E}_{0}$. Then time reversal yields
$\mathcal{O}^{-}(0)=\mathbf{E}_{0}+F$ and hence, by Remark \ref{Remark2.2},%
\[
D=\overline{\mathcal{O}^{+}(0)}\cap\mathcal{O}^{-}(0)=\left(  \overline
{K}+\mathbf{E}^{0}\right)  \cap\left(  \mathbf{E}^{0}+F\right)  .
\]
Due to the decomposition $\mathbb{R}^{n}=\mathbf{E}^{+}\oplus\mathbf{E}%
^{0}\oplus\mathbf{E}^{-}$ this implies (\ref{linear_control__set}). In
particular, $D$ is bounded if and only if $\mathbf{E}^{0}=\{0\}$, i.e., if $A$
is a hyperbolic matrix. If $A$ is nonhyperbolic we embed system (\ref{linear})
into a homogeneous bilinear control system in $\mathbb{R}^{n+1}$ as explained
in Section \ref{Section6} and find that the boundary at infinity satisfies%
\begin{equation}
\partial_{\infty}(D)=\partial\left(  \pi_{\mathbb{P}}D^{1}\right)
\cap\mathbb{P}^{n,0}=\{[x_{1}:\cdots:x_{n}:0]\left\vert [x_{1}:\cdots
:x_{n}]\in\pi_{\mathbb{P}}\mathbf{E}^{0}\right.  \}. \label{infinity_linear}%
\end{equation}
This follows from (\ref{linear_control__set}) noting that for $0\not =%
x\in\mathbf{E}^{0}$ and every $j\in\mathbb{N}$ one obtains an element of $D$
given by $k_{j}+jx+f_{j}$ with $k_{j}\in\overline{K}$ and $f_{j}\in F$.
Considering the homogeneous coordinates and dividing by $j$ one finds for
$j\rightarrow\infty$ that (\ref{infinity_linear}) holds. The set
$\pi_{\mathbb{P}}\mathbf{E}^{0}$ is a maximal invariant chain transitive set
for the flow induced by the homogeneous part $\dot{x}=Ax$ on $\mathbb{P}%
^{n-1}$ (cf. Colonius and Kliemann \cite[Theorem 4.1.3]{ColK14}). Thus the
boundary at infinity $\partial_{\infty}(D)$ is a maximal invariant chain
transitive set for the induced flow on $\mathbb{P}^{n,0}$.

For general affine control systems of the form (\ref{affine}) it stands to
reason to replace the maximal chain transitive set $\pi_{\mathbb{P}}%
\mathbf{E}^{0}$ by maximal chain transitive sets of the control flow in
$\mathcal{U}\times\mathbb{P}^{n-1}$ associated with the homogeneous part or,
equivalently, by chain control sets in $\mathbb{P}^{n-1}$ (cf.
(\ref{chain_transitive2})) and to replace the spectral property of
$\mathbf{E}^{0}$ by appropriate generalized spectral properties. However, the
situation for general affine control systems will turn out to be more
intricate than for linear control systems.

Now we start our discussion of the nonhyperbolic case for (\ref{affine}). Here
several control sets with nonvoid interior may coexist as illustrated by
Example \ref{Example7.14} and \cite[Example 5.16 and Example 5.17]{ColRS}. The
following theorem shows that in the nonhyperbolic case all control sets with
nonvoid interior are unbounded.

\begin{theorem}
\label{Theorem_main2}Assume that the affine control system (\ref{affine}) on
$\mathbb{R}^{n}$ is nonhyperbolic.

(i) If there is a $\sigma$-periodic control $u\in\mathcal{U}_{pc}$ with
$g(u)\in\mathcal{S}_{\sigma}\cap\mathrm{int}(\mathcal{S})$ and $1\not \in
\mathrm{spec}(\Phi_{u}(\sigma,0))$, then there exists a control set $D$ with
nonvoid interior.

(ii) Every control set $D$ with nonvoid interior is unbounded. More precisely,
there are $x^{k}\in\mathrm{int}(D),k\in\mathbb{N}$ and $g(v)\in\mathcal{S}%
_{\tau}\cap\mathrm{int}(\mathcal{S}),\tau>0$, with $1\in\mathrm{spec}(\Phi
_{v}(\tau,0))$ and
\begin{equation}
\left\Vert x^{k}\right\Vert \rightarrow\infty\text{ and }d\left(  \frac{x^{k}%
}{\left\Vert x^{k}\right\Vert },\mathbf{E}(\Phi_{v}(\tau,0);1)\right)
\rightarrow0\text{ for }k\rightarrow\infty. \label{7.4}%
\end{equation}

\end{theorem}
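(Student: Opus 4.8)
The plan is to treat the two parts separately, deriving (i) at once from the preliminaries and reducing (ii) to the blow-up Lemmas \ref{Lemma_path3} and \ref{Lemma_image} by a path argument. For (i), since $1\notin\mathrm{spec}(\Phi_u(\sigma,0))$, Proposition \ref{Proposition_periodicODE}(i) produces a unique $\sigma$-periodic solution with initial value $x^0$, and Lemma \ref{Lemma_periodic} gives $g(u)x^0=x^0$. As $g(u)\in\mathrm{int}(\mathcal{S})$, Proposition \ref{Proposition2.5}(ii) places $x^0\in\mathrm{int}(D)$ for some control set $D$, so $D$ has nonvoid interior.

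For (ii) I fix a control set $D$ with $x\in\mathrm{int}(D)$. By Proposition \ref{Proposition2.5}(i) there is $g(u_0)\in\mathcal{S}_\sigma\cap\mathrm{int}(\mathcal{S})$ with $g(u_0)x=x$; I may assume $1\notin\mathrm{spec}(\Phi_{u_0}(\sigma,0))$, since otherwise $u_0$ itself is a witness putting us directly into the bounded alternative below. Nonhyperbolicity provides $g(v_0)\in\mathcal{S}_\tau\cap\mathrm{int}(\mathcal{S})$ with $1\in\mathrm{spec}(\Phi_{v_0}(\tau,0))$. Using Lemma \ref{Lemma_path_a2} I join $g(u_0)$ to $g(v_0)$ by a path $p(\alpha)=g(u^\alpha)\in\mathrm{int}(\mathcal{S})$ with $\tau_\alpha$-periodic controls $u^\alpha$. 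The map $\alpha\mapsto\det(I-\Phi_{u^\alpha}(\tau_\alpha,0))$ is continuous and piecewise analytic, nonzero at $\alpha=0$ and zero at $\alpha=1$, so $\alpha^\ast:=\inf\{\alpha:1\in\mathrm{spec}(\Phi_{u^\alpha}(\tau_\alpha,0))\}$ lies in $(0,1]$ with $1\in\mathrm{spec}(\Phi_{u^{\alpha^\ast}}(\tau_{\alpha^\ast},0))$.

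On $[0,\alpha^\ast)$ there are no resonances, so by Proposition \ref{Proposition_analytic_new}(ii) the unique fixed points $x^\alpha$ of $g(u^\alpha)$ depend continuously on $\alpha$, and the connectedness argument of Theorem \ref{Theorem_hyperbolic1} carries over to show $x^\alpha\in\mathrm{int}(D)$ for every $\alpha\in[0,\alpha^\ast)$. The decisive point is the behaviour as $\alpha\to(\alpha^\ast)^-$, controlled by $(I-\Phi_{u^\alpha}(\tau_\alpha,0))x^\alpha=\int_0^{\tau_\alpha}\Phi_{u^\alpha}(\tau_\alpha,s)(Cu^\alpha(s)+d)\,ds$, whose right-hand side stays bounded. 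If $\|x^\alpha\|\to\infty$ along some sequence $\alpha_k\to(\alpha^\ast)^-$, then dividing this identity by $\|x^{\alpha_k}\|$ forces every cluster point of $x^{\alpha_k}/\|x^{\alpha_k}\|$ into $\ker(I-\Phi_{u^{\alpha^\ast}}(\tau_{\alpha^\ast},0))=\mathbf{E}(\Phi_{u^{\alpha^\ast}}(\tau_{\alpha^\ast},0);1)$; with $v:=u^{\alpha^\ast}$ and $x^k:=x^{\alpha_k}\in\mathrm{int}(D)$ this gives (\ref{7.4}).

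The hard part will be the complementary alternative, where $x^\alpha$ stays bounded as $\alpha\to(\alpha^\ast)^-$. Then any cluster point $y^\ast$ satisfies $g(u^{\alpha^\ast})y^\ast=y^\ast$, so $u^{\alpha^\ast}$ admits a periodic solution and Lemma \ref{Lemma_image} applies with $y^0=y^\ast$: the unbounded affine subspace $Y=y^\ast+\mathbf{E}(\Phi_{u^{\alpha^\ast}}(\tau_{\alpha^\ast},0);1)$ consists of periodic initial values and carries points $x^k$ satisfying (\ref{7.4}). Since $g(u^{\alpha^\ast})\in\mathrm{int}(\mathcal{S})$, Proposition \ref{Proposition2.5}(ii) and the local constancy of control sets (Remark \ref{Remark2.2}) along the connected set $Y$ give $Y\subset\mathrm{int}(D^\ast)$ for a single control set $D^\ast$; as $x^{\alpha_k}\to y^\ast\in\mathrm{int}(D^\ast)$ with $x^{\alpha_k}\in\mathrm{int}(D)$, Remark \ref{Remark2.2} forces $D=D^\ast$, so the points $x^k\in Y\subset\mathrm{int}(D)$ again establish (\ref{7.4}) with $v:=u^{\alpha^\ast}$. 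In both alternatives $D$ is unbounded, and the essential difficulty is transferring the blow-up along $Y$ back to the original control set $D$ via continuity of the fixed-point branch as it accumulates on $Y$.
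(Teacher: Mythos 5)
Your proposal is correct and follows essentially the same route as the paper's own proof: Proposition \ref{Proposition2.5} to obtain a periodic fixed point in $\mathrm{int}(D)$, the path of Lemma \ref{Lemma_path_a2} to a first resonance parameter, continuity of the fixed-point branch inside $\mathrm{int}(D)$ via the argument of Theorem \ref{Theorem_hyperbolic1}, and then the blow-up conclusion from Lemma \ref{Lemma_image} in the bounded case and the kernel-alignment estimate in the unbounded case. The only cosmetic difference is that you dichotomize on boundedness of the branch $x^{\alpha}$ rather than on the image condition (\ref{7.4b}) (which in the paper separates out the case handled by Lemma \ref{Lemma_path3}); the two organizations are equivalent, since failure of (\ref{7.4b}) forces unboundedness.
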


\begin{proof}
(i) By Lemma \ref{Lemma_periodic} there is a unique $\sigma$-periodic
trajectory of (\ref{affine}) for $u$. Proposition \ref{Proposition2.5}(ii)
implies that it is contained in the interior of a control set $D$.

(ii) Let $x$ be in the interior of a control set $D$. By Proposition
\ref{Proposition2.5}(i) there are $\sigma>0$ and $g(u)\in\mathcal{S}_{\sigma
}\cap\mathrm{int}(\mathcal{S})$ such that $g(u)x=x$. Then the $\sigma
$-periodic control $u$ yields the $\sigma$-periodic trajectory $\varphi
(\cdot,x,u)\subset\mathrm{int}(D)$ and hence
\[
\int_{0}^{\sigma}\Phi_{u}(\sigma,s)\left(  Cu(s)+d\right)  ds=(I-\Phi
_{u}(\sigma,0))x.
\]

\textbf{Case 1}: If $1\in\mathrm{spec}(\Phi_{u}(\sigma,0))$ the affine
subspace $Y=x+\mathbf{E}(\Phi_{u}(\sigma,0);1)$ is contained in $\mathrm{int}%
(D)$. For the proof, an application of Lemma \ref{Lemma_image} shows that
there is a\ $\sigma$-periodic solution of (\ref{affine_0}) starting in $y$ if
and only if $y\in Y=x+\mathbf{E}(\Phi_{u}(\sigma,0);1)$. Thus $g(u)y=y\,$\ for
all $y\in Y$. Proposition \ref{Proposition2.5}(ii) implies that every $y$ is
in the interior of some control set, hence $Y\subset\mathrm{int}(D)$.
Furthermore, Lemma \ref{Lemma_image} also yields points $x^{k}\in Y$ such that
assertion (\ref{7.4}) holds with $v:=u$ and $\tau:=\sigma$.

\textbf{Case 2}: Suppose that $1\not \in \mathrm{spec}(\Phi_{u}(\sigma,0))$.
Since the system is nonhyperbolic there is a $\tau^{\ast}$-periodic control
$v^{\ast}$ with $1\in\mathrm{spec}(\Phi_{v^{\ast}}(\tau^{\ast},0))$ and
$g(v^{\ast})\in\mathcal{S}_{\tau^{\ast}}\cap\mathrm{int}(\mathcal{S})$.

Consider the continuous paths $p$ and $p^{\hom}$ from $g(u)$ to $g(v^{\ast})$
and $\Phi_{u}(\sigma,0)$ to $\Phi_{v^{\ast}}(\tau^{\ast},0)$, resp., given by
Lemma \ref{Lemma_path_a2}. Let%
\[
\alpha_{0}:=\sup\{\alpha\in\lbrack0,1]\left\vert \forall\alpha^{\prime}%
\in\lbrack0,\alpha):1\not \in \mathrm{spec}(\Phi_{u^{\alpha^{\prime}}}%
(\tau_{\alpha^{\prime}},0))\right.  \}.
\]
Hence, for $\alpha\in\lbrack0,\alpha_{0})$, Proposition
\ref{Proposition_periodicODE}(i) shows that there are unique $\tau_{\alpha}%
$-periodic trajectories for $u^{\alpha}$ which by Proposition
\ref{Proposition2.5}(ii) are in the interior of a control set. By Proposition
\ref{Proposition_analytic_new}(ii) $x^{\alpha}$ depends continuously on
$\alpha\in\lbrack0,\alpha_{0})$ and then the arguments in the proof of Theorem
\ref{Theorem_hyperbolic1} shows that the initial values satisfy $x^{\alpha}%
\in\mathrm{int}(D)$ for all $\alpha\in\lbrack0,\alpha_{0})$.

Now consider a sequence $\alpha_{k}\rightarrow\alpha_{0}$ with $\alpha
_{k}<\alpha_{0}$. Suppose first that%
\begin{equation}
\int_{0}^{\tau_{\alpha_{0}}}\Phi_{u^{\alpha_{0}}}(\tau_{\alpha_{0}},s)\left(
Cu^{\alpha_{0}}(s)+d\right)  ds\in\operatorname{Im}(I-\Phi_{u^{\alpha_{0}}%
}(\tau_{\alpha_{0}},0)).\label{7.4b}%
\end{equation}
Let for $k=0,1,2,\ldots$%
\[
b_{k}:=\int_{0}^{\tau_{\alpha_{k}}}\Phi_{u^{\alpha_{k}}}(\tau_{\alpha_{k}%
},s)\left(  Cu^{\alpha_{k}}(s)+d\right)  ds,\quad A_{k}:=I-\Phi_{u^{\alpha
_{k}}}(\tau_{\alpha_{k}},0).
\]
Then $A_{k}x^{\alpha_{k}}=b_{k}$ and $A_{k}\rightarrow A_{0},b_{k}\rightarrow
b_{0}$ for $k\rightarrow\infty$, and $\mathrm{\ker}A_{0}=\mathbf{E}%
(\Phi_{u^{\alpha_{0}}}(\tau_{\alpha_{0}},0);1)$. If $x^{\alpha_{k}}$ remains
bounded, we may assume that $x^{\alpha_{k}}\rightarrow x^{0}$ for some
$x^{0}\in\mathbb{R}^{n}$ and hence $A_{0}x^{0}=b_{0}$. Since $1\in
\mathrm{spec}(\Phi_{u^{\alpha_{0}}},0))$ Lemma \ref{Lemma_image} implies
assertion (\ref{7.4}) for $v:=u^{\alpha_{0}}$, as in Case 1. If $x^{\alpha
_{k}}$ becomes unbounded then%
\[
\left\Vert A_{0}\frac{x^{\alpha_{k}}}{\left\Vert x^{\alpha_{k}}\right\Vert
}\right\Vert \leq\left\Vert A_{0}-A_{k}\right\Vert +\left\Vert A_{k}%
\frac{x^{\alpha_{k}}}{\left\Vert x^{\alpha_{k}}\right\Vert }\right\Vert
\leq\left\Vert A_{0}-A_{k}\right\Vert +\frac{b_{k}}{\left\Vert x^{\alpha_{k}%
}\right\Vert }\rightarrow0\text{ for }k\rightarrow\infty,
\]
and again (\ref{7.4}) follows.

If (\ref{7.4b}) does not hold, Lemma \ref{Lemma_path3} implies that, for
$k=1,2,\ldots$, the initial values $x^{k}$ of the $\tau_{\alpha_{k}}$-periodic
solutions satisfy (\ref{7.4}) with $v:=u^{\alpha_{0}},\tau=\tau_{\alpha_{0}}$.
\end{proof}

Next we discuss the relation of the boundary at infinity to control sets of
the homogeneous part of the affine control system, motivated by the case of
linear control systems exposed in the beginning of this section. First we
obtain the following result for invariant control sets.

\begin{theorem}
\label{Theorem_inva}Assume that the affine system (\ref{affine}) is
nonhyperbolic and suppose that $D$ is an invariant control set.

(i) Then the interior of $D$ is nonvoid, the set $D$ is unbounded in
$\mathbb{R}^{n}$, and the boundary at infinity\ $\partial_{\infty}(D)$
contains an invariant control set $e(_{\mathbb{P}}D^{\hom})$ of the system
restricted to $\mathbb{P}^{n,0}$.

(ii) If the control range $\Omega$ is a compact convex neighborhood of the
origin and the system on $\mathbb{P}^{n-1}$ satisfies the accessibility rank
condition, then the boundary at infinity $\partial_{\infty}(D)$ contains the
unique invariant control set $e(_{\mathbb{P}}D^{\hom})$ where $_{\mathbb{P}%
}D^{\hom}$ the unique invariant control set on $\mathbb{P}^{n-1}$.
\end{theorem}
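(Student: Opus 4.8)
The plan is to prove (i) in three stages and then obtain (ii) from the uniqueness furnished by Theorem \ref{Theorem_95}. First I would dispose of the two easy assertions of (i). Since the accessibility rank condition (\ref{LARC}) is assumed throughout, the system is locally accessible on $\mathbb{R}^{n}$, so Remark \ref{Remark2.2} gives $\overline{D}=\overline{\mathrm{int}(D)}$ and in particular $\mathrm{int}(D)\neq\varnothing$. With nonvoid interior in hand, Theorem \ref{Theorem_main2}(ii) applies and shows that $D$ is unbounded; by Proposition \ref{Proposition_general}(i) this is equivalent to $\partial_{\infty}(D)=\partial(\pi_{\mathbb{P}}D^{1})\cap\mathbb{P}^{n,0}\neq\varnothing$.

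The substantive task is to locate an invariant control set inside $\partial_{\infty}(D)$. Write $C:=\pi_{\mathbb{P}}D^{1}$, which by Proposition \ref{Proposition_general}(i) is an invariant control set of the projective system (\ref{Pn}) on $\mathbb{P}^{n}$. Because $C\subset\mathbb{P}^{n,1}$ and $\mathbb{P}^{n,1}\cap\mathbb{P}^{n,0}=\varnothing$, one has the clean identity $\overline{C}\cap\mathbb{P}^{n,0}=\partial C\cap\mathbb{P}^{n,0}=\partial_{\infty}(D)$. I would then check that this set is compact, nonvoid (just shown), and forward invariant for the system restricted to $\mathbb{P}^{n,0}$: invariance of $C$ gives $\mathcal{O}^{+}(p)\subset\overline{C}$ for every $p\in C$, and continuous dependence on the initial point extends forward invariance of $\overline{C}$ to its boundary points; since $\mathbb{P}^{n,0}$ is a closed invariant submanifold of $\mathbb{P}^{n}$, the intersection $\partial_{\infty}(D)$ is forward invariant within $\mathbb{P}^{n,0}$.

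Next I would invoke the standard fact that a nonvoid compact forward invariant set contains an invariant control set of the restricted system: by Zorn's lemma choose a minimal nonvoid compact forward invariant subset $W_{0}\subset\partial_{\infty}(D)$ (forward invariance passes to intersections of chains, since an orbit through a common point stays in every member); then $\overline{\mathcal{O}^{+}(x)}=W_{0}$ for all $x\in W_{0}$, which is exactly the defining property of an invariant control set together with the maximality clause of Definition \ref{Definition_control_sets}. By Proposition \ref{Proposition_general}(ii)---whose invariant-control-set correspondence, as remarked there, holds even when the accessibility rank condition on $\mathbb{P}^{n-1}$ fails---this $W_{0}$ has the form $e(_{\mathbb{P}}D^{\hom})$ for an invariant control set $_{\mathbb{P}}D^{\hom}$ in $\mathbb{P}^{n-1}$, which proves (i). For (ii) the extra hypotheses pin this set down: by Theorem \ref{Theorem_submanifolds}(ii) the accessibility rank condition transfers to $\mathbb{P}^{n,0}$, so every control set there has nonvoid interior (Remark \ref{Remark2.2}), while Theorem \ref{Theorem_95}(i) yields exactly one invariant control set $_{\mathbb{P}}D^{\hom}$ on $\mathbb{P}^{n-1}$, hence a unique invariant control set $e(_{\mathbb{P}}D^{\hom})$ on $\mathbb{P}^{n,0}$. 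The set produced in (i) must coincide with it, giving $e(_{\mathbb{P}}D^{\hom})\subset\partial_{\infty}(D)$.

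The step I expect to be the main obstacle is the third paragraph: justifying forward invariance of $\overline{C}$ at its boundary points and extracting a genuine invariant control set from a merely forward invariant compact set, in part (i) \emph{without} assuming local accessibility on $\mathbb{P}^{n,0}$. The minimality argument settles this, but one must verify that the minimal set satisfies the approximate-controllability and maximality clauses of Definition \ref{Definition_control_sets}, rather than being a strictly larger invariant region.
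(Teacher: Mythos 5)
Your proposal is correct and follows essentially the same route as the paper: nonvoid interior from local accessibility, unboundedness from Theorem \ref{Theorem_main2}, forward invariance of $\overline{\pi_{\mathbb{P}}D^{1}}$ (hence of $\partial_{\infty}(D)$) via continuous dependence on initial conditions, existence of an invariant control set inside this compact forward invariant set, identification through Proposition \ref{Proposition_general}(ii), and uniqueness via Theorem \ref{Theorem_95}(i) for part (ii). The only difference is that where the paper cites \cite[Theorem 3.2.8]{ColK00} for the existence step you reprove it by the Zorn's-lemma minimality argument (which is indeed the standard proof and, as you note, needs no accessibility on $\mathbb{P}^{n,0}$); your parenthetical claim in (ii) that the rank condition makes \emph{every} control set have nonvoid interior is not what Remark \ref{Remark2.2} says, but it is an unused aside, since uniqueness of the invariant control set alone closes the argument.
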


\begin{proof}
(i) By local accessibility, the interior of the invariant control set $D$ is
nonvoid and hence Theorem \ref{Theorem_main2} shows that $D$ is unbounded. It
follows that there is a point $\pi_{\mathbb{P}}(x,0)\in$ $\overline
{\pi_{\mathbb{P}}D^{1}}\cap\mathbb{P}^{n,0}$ and by Proposition
\ref{Proposition_general}(i) $\pi_{\mathbb{P}}D^{1}$ is an invariant control
set contained in $\mathbb{P}^{n,1}$. Since $\mathbb{P}^{n,0}$ is invariant and
closed in $\mathbb{P}^{n}$ it follows that $\overline{\mathcal{O}^{+}%
(\pi_{\mathbb{P}}(x,0))}\subset\mathbb{P}^{n,0}$, hence every point in this
set has the form $\pi_{\mathbb{P}}(y,0)$.\ For fixed $\varepsilon>0$ there are
$T>0$ and $u\in\mathcal{U}$ with $d(\pi_{\mathbb{P}}\psi(T,(x,0),u),\pi
_{\mathbb{P}}(y,0))<\varepsilon$ (recall (\ref{hom0}) and (\ref{hom1})).
Furthermore, there are $\pi_{\mathbb{P}}(x^{k},1)\in\,\pi_{\mathbb{P}}D^{1}$
with $\pi_{\mathbb{P}}(x^{k},1)\rightarrow\pi_{\mathbb{P}}(x,0)$. Since
$\pi_{\mathbb{P}}D^{1}$ is an invariant control set it follows that
$\pi_{\mathbb{P}}\psi(T,(x^{k},1),u)\in\overline{\pi_{\mathbb{P}}D^{1}}$, and
continuous dependence on the initial values implies that $d(\pi_{\mathbb{P}%
}\psi(T,(x^{k},1),u),\pi_{\mathbb{P}}(y,0))<\varepsilon$ for $k$ large enough.
Since $\varepsilon>0$ is arbitrary, we have shown that $\pi_{\mathbb{P}%
}(y,0)\in\overline{\pi_{\mathbb{P}}D^{1}}$ and hence $\overline{\mathcal{O}%
^{+}(\pi_{\mathbb{P}}(x,0))}\subset\partial\left(  \pi_{\mathbb{P}}%
D^{1}\right)  \cap\mathbb{P}^{n,0}=\partial_{\infty}(D)$. By Colonius and
Kliemann \cite[Theorem 3.2.8]{ColK00}, for every point $\pi_{\mathbb{P}}(x,0)$
in the compact space $\mathbb{P}^{n,0}$ there is an invariant control set
contained in the closure of the reachable set $\overline{\mathcal{O}^{+}%
(\pi_{\mathbb{P}}(x,0))}$. By Proposition \ref{Proposition_general}(ii) this
invariant control set has the form $e(_{\mathbb{P}}D^{\hom})$ implying (i).

(ii) This follows from Proposition \ref{Proposition_general}(ii) since the
accessibility rank condition implies by Theorem \ref{Theorem_95}(i) that the
invariant control set of the system on $\mathbb{P}^{n-1}$ is unique.
\end{proof}

We proceed to prove the following result on the relation between the boundary
at infinity of a control set in $\mathbb{R}^{n}$ and the chain control sets of
the homogeneous part in $\mathbb{P}^{n-1}$.

\begin{proposition}
\label{Theorem_main3}Assume that the affine control system (\ref{affine}) on
$\mathbb{R}^{n}$ is nonhyperbolic. Then for every control set $D\subset
\mathbb{R}^{n}$ with nonvoid interior of (\ref{affine}) there is a chain
control set $_{\mathbb{P}}E^{\hom}\subset\mathbb{P}^{n-1}$ such that
$\partial_{\infty}(D)\cap e(_{\mathbb{P}}E^{\hom})\not =\varnothing$.
\end{proposition}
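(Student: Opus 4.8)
The plan is to read off the required chain control set from the asymptotic direction produced by the unboundedness result Theorem~\ref{Theorem_main2}, and then to match that direction with the boundary at infinity via Proposition~\ref{Proposition_general}(i). Since $D$ has nonvoid interior and the system is nonhyperbolic, Theorem~\ref{Theorem_main2}(ii) supplies points $x^{k}\in\mathrm{int}(D)$ and an element $g(v)\in\mathcal{S}_{\tau}\cap\mathrm{int}(\mathcal{S})$ arising from a $\tau$-periodic control $v$ with $1\in\mathrm{spec}(\Phi_{v}(\tau,0))$, such that $\|x^{k}\|\to\infty$ and $d(x^{k}/\|x^{k}\|,\mathbf{E}(\Phi_{v}(\tau,0);1))\to0$, cf.~(\ref{7.4}).

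First I would extract, using compactness of the unit sphere, a subsequence along which $x^{k}/\|x^{k}\|\to y$ with $\|y\|=1$; the distance estimate in (\ref{7.4}) forces $y\in\mathbf{E}(\Phi_{v}(\tau,0);1)=\ker(I-\Phi_{v}(\tau,0))$, so $y\neq0$ and $\Phi_{v}(\tau,0)y=y$. Next I would produce the chain control set. Because $v$ is $\tau$-periodic and $\Phi_{v}(\tau,0)y=y$, the solution $t\mapsto\Phi_{v}(t,0)y$ of the homogeneous part (\ref{hom_n}) is $\tau$-periodic, hence $t\mapsto\pi_{\mathbb{P}}(\Phi_{v}(t,0)y)$ is a $\tau$-periodic trajectory of the induced system on $\mathbb{P}^{n-1}$. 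As observed after Definition~\ref{intro2:ccs}, every periodic trajectory lies in a chain control set; let $_{\mathbb{P}}E^{\hom}\subset\mathbb{P}^{n-1}$ be one containing $\pi_{\mathbb{P}}y$, so that $e(\pi_{\mathbb{P}}y)\in e(_{\mathbb{P}}E^{\hom})$.

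Finally I would locate $e(\pi_{\mathbb{P}}y)$ on the boundary at infinity. Applying Proposition~\ref{Proposition_general}(i) to the sequence $x^{k}\in D$ with $\|x^{k}\|\to\infty$ and to the cluster point $y$ of $x^{k}/\|x^{k}\|$ yields, along a further subsequence, $[x_{1}^{k}:\cdots:x_{n}^{k}:1]\to[y_{1}:\cdots:y_{n}:0]=e(\pi_{\mathbb{P}}y)\in\mathbb{P}^{n,0}$. Since $\pi_{\mathbb{P}}D^{1}\subset\mathbb{P}^{n,1}$ and $\mathbb{P}^{n,0}$ is disjoint from $\mathbb{P}^{n,1}$, this limit lies in $\overline{\pi_{\mathbb{P}}D^{1}}\setminus\pi_{\mathbb{P}}D^{1}\subset\partial(\pi_{\mathbb{P}}D^{1})$, whence $e(\pi_{\mathbb{P}}y)\in\partial(\pi_{\mathbb{P}}D^{1})\cap\mathbb{P}^{n,0}=\partial_{\infty}(D)$. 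Combining the two memberships gives $e(\pi_{\mathbb{P}}y)\in\partial_{\infty}(D)\cap e(_{\mathbb{P}}E^{\hom})$, proving the claim.

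The computations here are light, since the substantive work is already carried out in Theorem~\ref{Theorem_main2} and Proposition~\ref{Proposition_general}. The step deserving the most care is the passage to the chain control set: one must use that the direction $y$ is a genuine eigenvector for the eigenvalue $1$ (not merely a Floquet multiplier of modulus one), so that the projected trajectory through $\pi_{\mathbb{P}}y$ is truly periodic and therefore embedded in a chain control set, and one must check that this argument needs no accessibility rank condition on $\mathbb{P}^{n-1}$, which the statement does not assume.
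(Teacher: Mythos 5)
Your proof is correct and takes essentially the same route as the paper's: both invoke Theorem~\ref{Theorem_main2} to obtain points $x^{k}\in\mathrm{int}(D)$ with $\Vert x^{k}\Vert\rightarrow\infty$ whose directions approach $\mathbf{E}(\Phi_{v}(\tau,0);1)$, observe that points of this eigenspace lie on $\tau$-periodic trajectories of the homogeneous part and hence project into a chain control set $_{\mathbb{P}}E^{\hom}$, and conclude via Proposition~\ref{Proposition_general}(i) that the limiting direction lies in $\partial_{\infty}(D)$. The only cosmetic difference is that the paper places the whole path-connected set $\pi_{\mathbb{P}}\mathbf{E}(\Phi_{v}(\tau,0);1)$ inside one chain control set, while you work with a single cluster point $\pi_{\mathbb{P}}y$, which suffices and makes the final intersection step slightly more explicit.
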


\begin{proof}
Theorem \ref{Theorem_main2} shows that $D$ is unbounded and that there are
$g(v)\in\mathcal{S}_{\tau}\cap\mathrm{int}(\mathcal{S})$ with $1\in
\mathrm{spec}(\Phi_{v}(\tau,0))$ and $x^{k}\in\mathrm{int}(D)$ satisfying
$\left\Vert x^{k}\right\Vert \rightarrow\infty$ and $d(\frac{x^{k}}{\left\Vert
x^{k}\right\Vert },\mathbf{E}(\Phi_{v}(\tau,0);1))\rightarrow0$ for
$k\rightarrow\infty$. Since $x=\Phi_{v}(\tau,0)x$ for all $x\in\mathbf{E}%
(\Phi_{v}(\tau,0);1)$ the path connected set $\pi_{\mathbb{P}}\mathbf{E}%
(\Phi_{v}(\tau,0);1)$ consists of points on $\tau$-periodic solutions for the
$\tau$-periodic control $v$ and hence is contained in a chain control set
$_{\mathbb{P}}E^{\hom}$. Hence one obtains $\partial_{\infty}(D)\cap
e(_{\mathbb{P}}E^{\hom})\not =\varnothing$ in $\mathbb{P}^{n,0}$.
\end{proof}

The following theorem presents a partial converse of Theorem
\ref{Theorem_inva}(i).

\begin{theorem}
\label{Theorem_main4}Assume that the homogeneous part (\ref{hom_n}) of the
affine system (\ref{affine}) satisfies the accessibility rank condition on
$\mathbb{R}^{n}\setminus\{0\}$, that there is $g(u)\in S_{\sigma}%
\cap\mathrm{int}(\mathcal{S})$ with $1\not \in \mathrm{spec}(\Phi_{u}%
(\sigma,0))$ for some $\sigma>0$, and that there are at most finitely many
control sets with nonvoid interior of system (\ref{affine}). Then for every
control set $_{\mathbb{R}}D_{i}^{\hom}$ with nonvoid interior there exists
such a control set $D$ of (\ref{affine}) with boundary at infinity satisfying%
\begin{equation}
\partial_{\infty}\left(  D\right)  \cap e(_{\mathbb{P}}D_{i}^{\hom}%
)\not =\varnothing\text{ for }_{\mathbb{P}}D_{i}^{\hom}\supset\pi_{\mathbb{P}%
}(_{\mathbb{R}}D_{i}^{\hom}).\label{7.5d}%
\end{equation}

\end{theorem}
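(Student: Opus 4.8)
The plan is to manufacture, out of the given homogeneous control set $_{\mathbb{R}}D_i^{\hom}$, a periodic control whose monodromy has the eigenvalue $1$ with an eigenvector projecting into $\mathrm{int}(_{\mathbb{P}}D_i^{\hom})$, and then to drive the blow-up mechanism of Theorem~\ref{Theorem_main2}(ii) along it. First I would apply Proposition~\ref{Proposition2.5}(i) to the homogeneous system (legitimate because its accessibility rank condition on $\mathbb{R}^n\setminus\{0\}$ is assumed): fixing $x\in\mathrm{int}(_{\mathbb{R}}D_i^{\hom})$ there are $\tau>0$ and $g(v)$ with $\Phi_v(\tau,0)\in\,_{\mathbb{R}}\mathcal{S}_\tau^{\hom}\cap\mathrm{int}(_{\mathbb{R}}\mathcal{S}^{\hom})$ and $\Phi_v(\tau,0)x=x$. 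Hence $1\in\mathrm{spec}(\Phi_v(\tau,0))$, $x\in\mathbf{E}(\Phi_v(\tau,0);1)$, and $\pi_{\mathbb{P}}x\in\mathrm{int}(_{\mathbb{P}}D_i^{\hom})$ for the control set $_{\mathbb{P}}D_i^{\hom}\supset\pi_{\mathbb{P}}(_{\mathbb{R}}D_i^{\hom})$.

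The difficulty is that $g(v)$ need not lie in $\mathrm{int}(\mathcal{S})$ in the affine group, so I would not use it directly. Instead, since $g(v)\in\mathcal{S}_\tau\subset\overline{\mathrm{int}(\mathcal{S})}$ by Theorem~\ref{Theorem_generalLie}(i), I would pick near-identity elements $g_k\in\mathrm{int}(\mathcal{S}_{\le\sigma_k})$ with $\sigma_k\to0^{+}$ and $g_k\to e$ (as in the proof of Proposition~\ref{Proposition2.5}(i)) and set $g(w_k):=g_kg(v)\in\mathrm{int}(\mathcal{S})$, where $w_k$ is $v$ concatenated with the short control defining $g_k$, extended $\tau_k$-periodically with $\tau_k=\tau+\sigma_k$. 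Then $w_k\to v$ in $L^1([0,\tau+1],\mathbb{R}^m)$, $\|w_k\|_\infty\le c$, $\tau_k\to\tau$, and $\Phi_{w_k}(\tau_k,0)\to\Phi_v(\tau,0)$. Passing to a subsequence I may assume $1\notin\mathrm{spec}(\Phi_{w_k}(\tau_k,0))$ (indices with $1\in\mathrm{spec}$ are affine-interior and fall directly under Theorem~\ref{Theorem_main2}(ii), Case 1); each remaining $w_k$ then has by Lemma~\ref{Lemma_periodic} a unique affine $\tau_k$-periodic solution $x^k$, and $g(w_k)x^k=x^k$ with $g(w_k)\in\mathrm{int}(\mathcal{S})$ forces $x^k\in\mathrm{int}(D_k)$ for a control set $D_k$ by Proposition~\ref{Proposition2.5}(ii).

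When $\int_0^\tau\Phi_v(\tau,s)(Cv(s)+d)\,ds\notin\mathrm{Im}(I-\Phi_v(\tau,0))$, Lemma~\ref{Lemma_path3} applies verbatim and yields $\|x^k\|\to\infty$ together with $x^k/\|x^k\|\to\mathbf{E}(\Phi_v(\tau,0);1)$; the hypothesis of finitely many control sets lets me pass to a further subsequence with all $x^k$ in a single control set $D$. A cluster point $y$ of $x^k/\|x^k\|$ then lies in $\mathbf{E}(\Phi_v(\tau,0);1)$, and Proposition~\ref{Proposition_general}(i) shows $e(\pi_{\mathbb{P}}y)\in\partial_\infty(D)$. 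To place this point in $e(_{\mathbb{P}}D_i^{\hom})$ I would observe that the connected set $\pi_{\mathbb{P}}\mathbf{E}(\Phi_v(\tau,0);1)$ consists of directions fixed by $\Phi_v(\tau,0)\in\mathrm{int}(_{\mathbb{R}}\mathcal{S}^{\hom})$, so by the projective analogue of Proposition~\ref{Proposition2.5}(ii) each of its points lies in the interior of some projective control set; being connected and meeting $\mathrm{int}(_{\mathbb{P}}D_i^{\hom})$ at $\pi_{\mathbb{P}}x$, the whole set lies in $\mathrm{int}(_{\mathbb{P}}D_i^{\hom})$. Hence $\pi_{\mathbb{P}}y\in\,_{\mathbb{P}}D_i^{\hom}$ and $\partial_\infty(D)\cap e(_{\mathbb{P}}D_i^{\hom})\neq\varnothing$, which is (\ref{7.5d}).

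The main obstacle is the complementary case $\int_0^\tau\Phi_v(\tau,s)(Cv(s)+d)\,ds\in\mathrm{Im}(I-\Phi_v(\tau,0))$, in which the approximating solutions $x^k$ stay bounded and the sequence above produces no blow-up. Here I expect to proceed as in Theorem~\ref{Theorem_main2}(ii), Case 2: using the given $g(u)\in\mathcal{S}_\sigma\cap\mathrm{int}(\mathcal{S})$ with $1\notin\mathrm{spec}(\Phi_u(\sigma,0))$ as a basepoint, form a path in $\mathrm{int}(\mathcal{S})$ from $g(u)$ to an affine-interior perturbation of $g(v)$ (Lemma~\ref{Lemma_path_a2} and Proposition~\ref{Proposition_analytic_new}), track the unique fixed points $x^\alpha\in\mathrm{int}(D)$ through a single control set $D$ with the finiteness hypothesis, and extract the blow-up at the first parameter where $1$ enters the spectrum via Lemma~\ref{Lemma_image} or Lemma~\ref{Lemma_path3}. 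The delicate point, which I would handle with a transversality argument on the analytic map $v\mapsto\det(I-\Phi_v(\tau,0))$, is to arrange that the eigenspace reached at that parameter is still $\mathbf{E}(\Phi_v(\tau,0);1)$ (equivalently, to perturb $v$ within the eigenvalue-$1$ locus so that the image condition fails while the fixed direction stays in $\mathrm{int}(_{\mathbb{P}}D_i^{\hom})$), so that the argument of the previous paragraph again applies.
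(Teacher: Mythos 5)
Your opening plan --- manufacture a periodic control whose homogeneous monodromy fixes a point $x\in\mathrm{int}(_{\mathbb{R}}D_{i}^{\hom})$, so that $1\in\mathrm{spec}$ with an anchored eigenspace, and then run the blow-up machinery of Lemma \ref{Lemma_path3} --- is indeed the paper's plan, and your treatment of the case $\int_{0}^{\tau}\Phi_{v}(\tau,s)(Cv(s)+d)\,ds\not\in\operatorname{Im}(I-\Phi_{v}(\tau,0))$ is essentially sound. But the case you yourself call the ``main obstacle'' is a genuine gap, and your proposed repair does not close it. The root of the problem is that Proposition \ref{Proposition2.5}(i) applied to the homogeneous system only gives $\Phi_{v}(\tau,0)\in\mathrm{int}(_{\mathbb{R}}\mathcal{S}^{\hom})$, whereas the argument needs a \emph{single} periodic control $u$ with three simultaneous properties: $\Phi_{u}(\tau,0)x=x$, $\Phi_{u}(\tau,0)\in\mathrm{int}(_{\mathbb{R}}\mathcal{S}^{\hom})$, and $g(u)\in\mathrm{int}(\mathcal{S})$. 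The paper obtains all three at once by a three-piece concatenation inside $\mathrm{int}(_{\mathbb{R}}D_{i}^{\hom})$: a short piece $u^{0}$ with $g(u^{0})\in\mathrm{int}(\mathcal{S})$, a short piece $u^{1}$ with $\Phi_{u^{1}}(\tau_{1},0)\in\mathrm{int}(_{\mathbb{R}}\mathcal{S}^{\hom})$, and a piece $u^{2}$ closing the loop at $x$ via exact controllability in $\mathrm{int}(_{\mathbb{R}}D_{i}^{\hom})$ (Remark \ref{Remark2.2}); interiority of one factor then propagates to the product in each of the two semigroups. With $g(u)\in\mathrm{int}(\mathcal{S})$ in hand the obstacle case becomes immediate: if $\int_{0}^{\tau}\Phi_{u}(\tau,s)(Cu(s)+d)\,ds\in\operatorname{Im}(I-\Phi_{u}(\tau,0))$, Lemma \ref{Lemma_image} produces unbounded affine fixed points $x^{k}\in y^{0}+\mathbf{E}(\Phi_{u}(\tau,0);1)$ of $g(u)$, and Proposition \ref{Proposition2.5}(ii) places every one of them in the interior of an affine control set, so the finiteness hypothesis together with the inclusion $\pi_{\mathbb{P}}\mathbf{E}(\Phi_{u}(\tau,0);1)\subset\mathrm{int}(_{\mathbb{P}}D_{i}^{\hom})$ finishes the proof. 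In your version this step is unavailable, because $g(v)$ is not known to be affine-interior, and your perturbation $g(w_{k})=g_{k}g(v)$, which restores affine interiority, destroys precisely the fixed-point/eigenvalue-one anchoring at $x$ that makes the eigenspace usable.

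Two further defects confirm the gap. First, your fallback for the obstacle case --- a path from the hyperbolic basepoint to a perturbation of $g(v)$, extracting blow-up at the first parameter $\alpha_{0}$ where $1$ enters the spectrum --- cannot work as sketched: the eigenspace $\mathbf{E}(\Phi_{u^{\alpha_{0}}}(\tau_{\alpha_{0}},0);1)$ appearing at that parameter bears no relation to $\mathbf{E}(\Phi_{v}(\tau,0);1)$, so the blow-up directions need not project into $_{\mathbb{P}}D_{i}^{\hom}$ at all; the ``transversality argument'' that is supposed to force the relevant eigenspace to survive to the endpoint is exactly the hard point, and no argument is given. Second, your dichotomy step is flawed: indices $k$ with $1\in\mathrm{spec}(\Phi_{w_{k}}(\tau_{k},0))$ do not ``fall directly under Theorem \ref{Theorem_main2}(ii), Case 1,'' since Case 1 presupposes an affine periodic solution lying in the interior of a control set, and for such $w_{k}$ an affine periodic solution need not exist (this is precisely failure of the image condition); even when it exists, the eigenspace $\mathbf{E}(\Phi_{w_{k}}(\tau_{k},0);1)$ is no longer anchored by $x$, so an additional eigenvector-convergence argument would be needed to land in $e(_{\mathbb{P}}D_{i}^{\hom})$. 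Both problems disappear once you replace the post-hoc perturbation by the paper's concatenation; the remainder of your argument (finiteness of the control sets, Proposition \ref{Proposition_general}(i), and the connectedness argument giving $\pi_{\mathbb{P}}\mathbf{E}\subset\mathrm{int}(_{\mathbb{P}}D_{i}^{\hom})$) then goes through.
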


\begin{proof}
Fix a point $x\in\mathrm{int}(_{\mathbb{R}}D_{i}^{\hom})$. Since by Theorem
\ref{Theorem_generalLie} $\mathrm{int}(\mathcal{S}_{\leq\tau})\not =%
\varnothing$ for all $\tau>0$ there are $\tau_{0}>0$ small enough and
$u^{0}\in\mathcal{U}_{pc}$ with $g(u^{0})\in\mathcal{S}_{\tau_{0}}%
\cap\mathrm{int}(\mathcal{S})$ and $x^{0}:=\Phi_{u^{0}}(\tau_{0}%
,0)x\in\mathrm{int}(_{\mathbb{R}}D_{i}^{\hom})$. Since also $\mathrm{int}%
(_{\mathbb{R}}\mathcal{S}_{\leq\tau}^{\hom})\not =\varnothing$ for all
$\tau>0$ there are $\tau_{1}>0$ small enough and $u^{1}\in\mathcal{U}_{pc}$
such that the corresponding element $\Phi_{u^{1}}(\tau_{1},0)\in
\,_{\mathbb{R}}\mathcal{S}_{\tau_{1}}^{\hom}\cap\mathrm{int}(_{\mathbb{R}%
}\mathcal{S}^{\hom})$ satisfies%
\[
x^{1}:=\Phi_{u^{1}}(\tau_{1},0)x^{0}=\Phi_{u^{1}}(\tau_{1},0)\Phi_{u^{0}}%
(\tau_{0},0)x\in\mathrm{int}(_{\mathbb{R}}D_{i}^{\hom}).
\]
By Remark \ref{Remark2.2} controllability in the interior of $_{\mathbb{R}%
}D_{i}^{\hom}$ holds, hence there are $\tau_{2}>0$ and $u^{2}\in
\mathcal{U}_{pc}$ satisfying $\Phi_{u^{2}}(\tau_{2},0)x^{1}=x$. Define
$\tau:=\tau_{0}+\tau_{1}+\tau_{2}$ and a control $u\in\mathcal{U}_{pc}$ by
$\tau$-periodic extension of%
\[
u(t):=\left\{
\begin{array}
[c]{lll}%
u^{0}(t) & \text{for} & t\in\lbrack0,\tau_{0})\\
u^{1}(t-\tau_{0}) & \text{for} & t\in\lbrack\tau_{0},\tau_{0}+\tau_{1})\\
u^{2}(t-\tau_{0}-\tau_{1}) & \text{for} & t\in\lbrack\tau_{0}+\tau_{1}%
,\tau_{0}+\tau_{1}+\tau_{2})
\end{array}
\right.  .
\]
Then $\Phi_{u}(\tau,0)x=x$, hence $1\in\mathrm{spec}(\Phi_{u}(\tau,0))$, and
\begin{align*}
g(u) &  =g(u^{2})g(u^{1})g(u^{0})\in\mathcal{S}_{\tau}\cap\mathrm{int}%
(\mathcal{S}),\\
\Phi_{u}(\tau,0) &  =\Phi_{u^{2}}(\tau_{2},0)\Phi_{u^{1}}(\tau_{1}%
,0)\Phi_{u^{0}}(\tau_{0},0)\in\mathrm{int}(_{\mathbb{R}}\mathcal{S}^{\hom}).
\end{align*}
By Proposition \ref{Proposition2.5}(ii) (for $_{\mathbb{R}}\mathcal{S}%
^{\mathrm{hom}}$) this implies that the eigenspace $\mathbf{E}(\Phi_{u}%
(\tau,0);1)$ of $\Phi_{u}(\tau,0)$ for the eigenvalue $1$ is contained in the
interior of some control set in $\mathbb{R}^{n}\setminus\{0\}$. Since
$x\in\mathbf{E}(\Phi_{u}(\tau,0);1)\cap\,_{\mathbb{R}}D_{i}^{\hom}$ it follows
that
\begin{equation}
\mathbf{E}(\Phi_{u}(\tau,0);1)\subset\mathrm{int}(_{\mathbb{R}}D_{i}^{\hom
})\text{ and hence }\pi_{\mathbb{P}}\mathbf{E}(\Phi_{u}(\tau,0);1)\subset
\mathrm{int}(_{\mathbb{P}}D_{i}^{\hom}).\label{7.5a}%
\end{equation}
By Proposition \ref{Proposition_analytic_new}(i) there are $g(u^{k}%
)\in\mathcal{S}_{\tau_{k}}\cap\mathrm{int}(\mathcal{S})$ with $1\not \in
\mathrm{spec}(\Phi_{u^{k}}(\tau_{k},0))$ and $\Phi_{u^{k}}(\tau_{k}%
,0)\rightarrow\Phi_{u}(\tau,0)$ for $k\rightarrow\infty$. Proposition
\ref{Proposition_periodicODE}(i) implies that there are unique $\tau_{k}%
$-periodic solutions denoted by $\varphi(\cdot,x^{k},u^{k})$ of the affine
equation for the $\tau_{k}$-periodic extension of $u^{k}$. By Proposition
\ref{Proposition2.5}(ii) each of them is in the interior of a control set for
the affine system (\ref{affine}). Since, by assumption, there are only
finitely many of them, infinitely many $x^{k}$ are contained in the interior
of a single control set $D$. We may assume that all $x^{k}$ are in
$\mathrm{int}(D)$.

Suppose that assumption (iii) in Lemma \ref{Lemma_path3} is satisfied. Then it
follows that%
\begin{equation}
\left\Vert x^{k}\right\Vert \rightarrow\infty\text{ and }\frac{x^{k}%
}{\left\Vert x^{k}\right\Vert }\rightarrow\mathbf{E}(\Phi_{u}(\tau,0);1)\text{
for }k\rightarrow\infty. \label{7.5b}%
\end{equation}
Hence, for $k\rightarrow\infty$, the points $\pi_{\mathbb{P}}(x^{k},1)\in
\,\pi_{\mathbb{P}}D^{1}$ converge to $e\left(  \pi_{\mathbb{P}}\mathbf{E}%
(\Phi_{u}(\tau,0);1)\right)  $ showing that%
\[
\overline{\pi_{\mathbb{P}}D^{1}}\cap e\left(  \pi_{\mathbb{P}}\mathbf{E}%
(\Phi_{u}(\tau,0);1)\right)  \not =\varnothing.
\]
Together with (\ref{7.5a}) this implies that the boundary at infinity of $D$
satisfies (\ref{7.5d}).

If assumption (iii) in Lemma \ref{Lemma_path3} does not hold Lemma
\ref{Lemma_image} shows that there are $x^{k}\in\mathbf{E}(\Phi_{u}%
(\tau,0);1)$ with (\ref{7.5b}) for $k\rightarrow\infty$. Then the assertion
follows also in this case.
\end{proof}

The following examples (cf. Mohler \cite[Example 2 on page 32]{Mohler} and
Colonius, Santana, Setti \cite[Example 5.16]{ColRS}) show that, in general,
the boundary at infinity of a control set $D$ may intersect more than one
control set of the projectivized homogeneous part.

\begin{example}
\label{Example_counter1}Consider the affine control system%
\[
\left(
\begin{array}
[c]{c}%
\dot{x}\\
\dot{y}%
\end{array}
\right)  =\left(
\begin{array}
[c]{cc}%
2u & 1\\
1 & 2u
\end{array}
\right)  \left(
\begin{array}
[c]{c}%
x\\
y
\end{array}
\right)  +\left(
\begin{array}
[c]{c}%
0\\
1
\end{array}
\right)  u,\quad u(t)\in\Omega=[-1,1].
\]
The eigenvalues of $A(u)=A+uB$ are given by $\lambda_{1}(u)=2u+1>\lambda
_{2}(u)=2u-1$ and $\lambda_{1}(-1/2)=\lambda_{2}(1/2)=0$. For every
$u\in\mathbb{R}$, the eigenspaces for $\lambda_{1}(u)$ and $\lambda_{2}(u)$
are $\mathbf{E}(A+uB;\lambda_{1}(u))=\{(z,z)^{\top}\left\vert z\in
\mathbb{R}\right.  \}$ and $\mathbf{E}(A+uB;\lambda_{2}(u))=\{(z,-z)^{\top
}\left\vert z\in\mathbb{R}\right.  \}$, resp. In the northern part of the unit
circle (hence in $\mathbb{P}^{1}$) this yields the two one-point control sets
given by the equilibria for any $u\in\lbrack-1,1]$,%
\[
_{\mathbb{P}}D_{1}^{\hom}=\left\{  \left(  1/\sqrt{2},1/\sqrt{2}\right)
^{\top}\right\}  \text{ and }_{\mathbb{P}}D_{2}^{\hom}=\left\{  \left(
-1/\sqrt{2},1/\sqrt{2}\right)  ^{\top}\right\}  .
\]
Here the trajectories not starting in one of these equilibria approach
$_{\mathbb{P}}D_{1}^{\hom}$ and $_{\mathbb{P}}D_{2}^{\hom}$ for $t\rightarrow
+\infty$ and $t\rightarrow-\infty$, resp. As shown in \cite[Example
4.4/5.16]{ColRS} there is a connected branch of equilibria of the affine
system
\[
\mathcal{B}_{1}=\left\{  \left(  x_{u},y_{u})^{\top}\right)  \left\vert
u\in\left(  -1/2,1/2\right)  \right.  \right\}  \text{ with }\left(
x_{0},y_{0}\right)  ^{\top}=\left(  0,0\right)  ^{\top}\in\mathcal{B}_{1}.
\]
They become unbounded for $\left\vert u\right\vert \rightarrow\frac{1}{2}$,
and there is a single control set $D$ containing the equilibria in
$\mathcal{B}_{1}$ in the interior. The equilibria in $\mathcal{B}_{1}$ satisfy
$\frac{(x_{u},y_{u})}{\left\Vert (x_{u},y_{u})\right\Vert }\rightarrow\left(
\mp\frac{1}{\sqrt{2}},\frac{1}{\sqrt{2}}\right)  $ for $u\rightarrow\pm
\frac{1}{2}$. Consequently, one obtains for the control sets of the
homogeneous part%
\[
e(_{\mathbb{P}}D_{1}^{\hom})\cup\,e(_{\mathbb{P}}D_{2}^{\hom})\subset
\partial(\pi_{\mathbb{P}}D^{1})\cap\mathbb{P}^{2,0}=\partial_{\infty}(D).
\]

\end{example}

The homogeneous part of Example \ref{Example_counter1} violates the
accessibility rank condition in $\mathbb{P}^{1}$ and the control sets
$_{\mathbb{P}}D_{1}^{\hom}$ and $_{\mathbb{P}}D_{2}^{\hom}$ in $\mathbb{P}%
^{1}$ have void interiors. We modify this example in order to get control sets
in $\mathbb{P}^{1}$ with nonvoid interior. Note that here an arbitrarily small
perturbation suffices to change the system behavior drastically.

\begin{example}
\label{Example_counter2}Consider for small $\varepsilon>0$ and $\Omega=[-1,1]$%
\[
\left(
\begin{array}
[c]{c}%
\dot{x}\\
\dot{y}%
\end{array}
\right)  =\left(
\begin{array}
[c]{cc}%
2u & 1\\
1 & (2+\varepsilon)u
\end{array}
\right)  \left(
\begin{array}
[c]{c}%
x\\
y
\end{array}
\right)  +\left(
\begin{array}
[c]{c}%
0\\
1
\end{array}
\right)  u=\left(  A+uB(\varepsilon)\right)  \left(
\begin{array}
[c]{c}%
x\\
y
\end{array}
\right)  +Cu,
\]
We will show that there is a control set $D$ in $\mathbb{R}^{2}$ such that the
boundary at infinity $\partial_{\infty}(D)$ intersects two control sets with
nonvoid interior for the homogeneous part.

\textbf{Step 1:} The eigenvalues of $A+uB(\varepsilon)$ are given by%
\[
\lambda_{1,2}(u,\varepsilon)=\frac{u}{2}(4+\varepsilon)\pm\frac{1}{2}%
\sqrt{4+u^{2}\left[  (4+\varepsilon)^{2}-4(4+2\varepsilon)\right]  }.
\]
Note that $\lambda_{1}(u,\varepsilon)>\lambda_{2}(u,\varepsilon)$ for all
$u\in\lbrack-1,1]$. For $\varepsilon=0$, it is clear that the functions
$u\mapsto\lambda_{1,2}(u,0)=2u\pm1$ are strictly increasing, hence this also
holds for small $\varepsilon>0$. Thus there are unique values $u^{1}%
(\varepsilon),u^{2}(\varepsilon)\in(-1,1)$ with $\lambda_{1}(u^{1}%
(\varepsilon),\varepsilon)=0$ and $\lambda_{2}(u^{2}(\varepsilon
),\varepsilon)=0$, and $u^{1}(\varepsilon)\rightarrow-\frac{1}{2}$ and
$u^{2}(\varepsilon)\rightarrow\frac{1}{2}$ for $\varepsilon\rightarrow0$. The
eigenvectors $(x,y)^{\top}$ satisfy $y=\left(  \lambda_{1,2}(u,\varepsilon
)-2u\right)  x$. For $\varepsilon\rightarrow0$ and all $u\in\lbrack-1,1]$ the
eigenspace $\mathbf{E}(A+uB(\varepsilon);\lambda_{i}(u,\varepsilon))$
converges to the eigenspace $\mathbf{E}(A+uB(0);\lambda_{i}(u,0))$. In the
northern part of the unit circle (hence in $\mathbb{P}^{1}$) this yields two
equilibria $e_{1}(u,\varepsilon)$ and $e_{2}(u,\varepsilon)$, and the other
trajectories in $\mathbb{P}^{1}$ converge for $t\rightarrow\infty$ to
$e_{1}(u,\varepsilon)$ and for $t\rightarrow-\infty$ to $e_{2}(u,\varepsilon
)$. Hence there are control sets $_{\mathbb{P}}D_{1}^{\hom}$ and
$_{\mathbb{P}}D_{2}^{\hom}$ (depending on $\varepsilon$) with nonvoid interior
consisting of the equilibria $e_{1}(u,\varepsilon)$ and $e_{2}(u,\varepsilon
),u\in\lbrack-1,1]$, resp. The control set $_{\mathbb{P}}D_{1}^{\hom}$ is
invariant. One easily verifies the accessibility rank condition in
$\mathbb{R}^{2}\setminus0\}$. Since $0\in\mathrm{int}(\Sigma_{Fl}%
(_{\mathbb{P}}D_{i}^{\hom}))$ it follows that $_{\mathbb{P}}D_{i}^{\hom}$ is
the projection to $\mathbb{P}^{1}$ of a control set $_{\mathbb{R}}D_{i}^{\hom
}$ in $\mathbb{R}^{2}\setminus\{0\}$, $i=1,2$.

\textbf{Step 2:} The equilibria $(x_{u}(\varepsilon),y_{u}(\varepsilon
))^{\top}$ approach $\mathbf{E}(A+u^{i}(\varepsilon)B(\varepsilon);0)$ for
$u\rightarrow u^{i}(\varepsilon),i=1,2$. In both cases, the equilibria become
unbounded. In particular, there is a connected unbounded branch of equilibria
\[
\mathcal{B}_{1}(\varepsilon)=\left\{  (x_{u}(\varepsilon),y_{u}(\varepsilon
))^{\top}\left\vert u\in\left(  u^{1}(\varepsilon),u^{2}(\varepsilon)\right)
\right.  \right\}
\]
and a single control set $D$ (again depending on $\varepsilon$) containing the
equilibria in $\mathcal{B}_{1}(\varepsilon)$.

\textbf{Step 3:} Embedding the control system into a homogeneous bilinear
system in $\mathbb{R}^{3}$ and projecting it to $\mathbb{P}^{2}$ one obtains
from the control set $D$ a control set in $\mathbb{P}^{2,1}$ given by
$\pi_{\mathbb{P}}D^{1}=\left\{  [x:y:1]\left\vert (x,y)^{\top}\in D\right.
\right\}  $. As the equilibria $(x_{u}(\varepsilon),y_{u}(\varepsilon))^{\top
}\in\mathcal{B}_{1}$ become unbounded for $u\rightarrow u^{i}(\varepsilon)$
they approach the eigenspace $\mathbf{E}(A+u^{i}(\varepsilon)B(\varepsilon
);0)$, hence%
\[
e(_{\mathbb{P}}D_{1}^{\hom})\cap\partial_{\infty}(D)\not =\varnothing\text{
and }e(_{\mathbb{P}}D_{2}^{\hom})\cap\partial_{\infty}(D)\not =\varnothing.
\]

\end{example}

In the following we require that $\Omega$ is a convex and compact neighborhood
of $0\in\mathbb{R}^{m}$ and consider chain control sets of the affine system
in $\mathbb{P}^{n}$.

\begin{definition}
The boundary at infinity of a chain control set $_{\mathbb{P}}E$ for the
affine system (\ref{Pn}) in $\mathbb{P}^{n}$ is $\partial_{\infty
}(_{\mathbb{P}}E):=\partial(_{\mathbb{P}}E)\cap\mathbb{P}^{n,0}$.
\end{definition}

This definition is similar to the boundary at infinity for control sets but it
refers to chain control sets in $\mathbb{P}^{n}$ not requiring that they are
obtained from chain control sets in $\mathbb{R}^{n}$.

\begin{lemma}
\label{Lemma_chains}Let $_{\mathbb{P}}E$ be a chain control set in
$\mathbb{P}^{n}$.

(i) If $\partial_{\infty}(_{\mathbb{P}}E)\cap e(_{\mathbb{P}}E_{j}^{\hom
})\not =\varnothing$ for a chain control set $_{\mathbb{P}}E_{j}^{\hom}$ in
$\mathbb{P}^{n-1}$ of the homogeneous part, then $e(_{\mathbb{P}}E_{j}^{\hom
})\subset\partial_{\infty}(_{\mathbb{P}}E)$.

(ii) If $\partial_{\infty}(_{\mathbb{P}}E)$ is nonvoid, it contains a chain
control set $e(_{\mathbb{P}}E_{j}^{\hom})$ for a chain control set
$_{\mathbb{P}}E_{j}^{\hom}$ of the homogeneous part.
\end{lemma}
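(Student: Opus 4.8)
The plan is to handle both parts with a single mechanism coming from the fact that $\mathbb{P}^{n,0}$ is closed and \emph{invariant} under the flow of (\ref{Pn}): in $\mathbb{R}^{n+1}$ the last coordinate obeys $\dot z=0$, so both $\{z=0\}$ and $\{z\neq0\}$ are invariant, and on $\mathbb{P}^{n,0}$ the system (\ref{Pn}) coincides, via the diffeomorphism $e$ of (\ref{e}), with the projectivized homogeneous part on $\mathbb{P}^{n-1}$. Consequently every homogeneous controlled $(\varepsilon,T)$-chain inside $e({}_{\mathbb{P}}E_{j}^{\hom})$ is simultaneously an $(\varepsilon,T)$-chain for the affine system (\ref{Pn}), and any control keeping a trajectory inside ${}_{\mathbb{P}}E_{j}^{\hom}$ keeps it inside $e({}_{\mathbb{P}}E_{j}^{\hom})$. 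Hence $e({}_{\mathbb{P}}E_{j}^{\hom})$ is a chain transitive set satisfying conditions (i) and (ii) of Definition \ref{intro2:ccs} for the affine system in $\mathbb{P}^{n}$.

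For (i) I would first show $e({}_{\mathbb{P}}E_{j}^{\hom})\subset{}_{\mathbb{P}}E$. Since $p\in\partial_{\infty}({}_{\mathbb{P}}E)\cap e({}_{\mathbb{P}}E_{j}^{\hom})$ and $\partial_{\infty}({}_{\mathbb{P}}E)\subset{}_{\mathbb{P}}E$ (chain control sets are closed), the chain transitive set $e({}_{\mathbb{P}}E_{j}^{\hom})$ meets the chain control set ${}_{\mathbb{P}}E$ in the point $p$. Concatenating chains through $p$ shows that $e({}_{\mathbb{P}}E_{j}^{\hom})\cup{}_{\mathbb{P}}E$ is again chain transitive, so maximality of ${}_{\mathbb{P}}E$ forces $e({}_{\mathbb{P}}E_{j}^{\hom})\subset{}_{\mathbb{P}}E$. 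As $e({}_{\mathbb{P}}E_{j}^{\hom})\subset\mathbb{P}^{n,0}$ by construction, it only remains to upgrade this to $e({}_{\mathbb{P}}E_{j}^{\hom})\subset\partial({}_{\mathbb{P}}E)$. Here I would use the transverse dynamics at $\mathbb{P}^{n,0}$: in a chart $x_{i}\neq0$ the normal coordinate $\zeta:=z/x_{i}$ satisfies the scalar linear equation $\dot\zeta=-\zeta\,(\dot x_{i}/x_{i})$, so $\mathbb{P}^{n,0}$ is invariant with a well-defined normal exponent whose sign is a spectral quantity of the connected chain control set ${}_{\mathbb{P}}E_{j}^{\hom}$ (cf. the subbundle description around (\ref{chain_transitive2}) and Definition \ref{Definition_uni_hyperbolic}). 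When this exponent is nonzero the neighboring points of $\mathbb{P}^{n,1}$ leave a neighborhood of $\mathbb{P}^{n,0}$ and hence cannot fill a full $\mathbb{P}^{n}$-neighborhood inside ${}_{\mathbb{P}}E$, which gives $e({}_{\mathbb{P}}E_{j}^{\hom})\subset\partial({}_{\mathbb{P}}E)$. The hypothesis that $p$ already lies in $\partial({}_{\mathbb{P}}E)$ certifies this nonvanishing, and since it is uniform over the connected set $e({}_{\mathbb{P}}E_{j}^{\hom})$ one obtains the ``all or nothing'' conclusion $e({}_{\mathbb{P}}E_{j}^{\hom})\subset\partial_{\infty}({}_{\mathbb{P}}E)$.

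For (ii) the strategy is to reduce to (i) by producing one homogeneous chain control set meeting $\partial_{\infty}({}_{\mathbb{P}}E)$. Fix $p_{0}\in\partial_{\infty}({}_{\mathbb{P}}E)$. By property (i) of the chain control set ${}_{\mathbb{P}}E$ there is a control $u$ with $\varphi_{\hom}(t,p_{0},u)\in{}_{\mathbb{P}}E$ for all $t$, and by invariance this trajectory stays in ${}_{\mathbb{P}}E\cap\mathbb{P}^{n,0}$. Lifting $(u,p_{0})$ to the control flow on the compact space $\mathcal{U}\times\mathbb{P}^{n,0}$, its $\omega$-limit set is nonvoid, compact, invariant and chain transitive, and projects into ${}_{\mathbb{P}}E\cap\mathbb{P}^{n,0}$; being chain transitive for the homogeneous system it lies in some $e({}_{\mathbb{P}}E_{j}^{\hom})$, and the engine above gives $e({}_{\mathbb{P}}E_{j}^{\hom})\subset{}_{\mathbb{P}}E\cap\mathbb{P}^{n,0}$. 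Invoking the transverse-exponent dichotomy once more, the assumption $\partial_{\infty}({}_{\mathbb{P}}E)\neq\varnothing$ (which supplies the boundary point $p_{0}$, a non-interior point of ${}_{\mathbb{P}}E$ on $\mathbb{P}^{n,0}$) rules out that every such extracted set is interior, so at least one $e({}_{\mathbb{P}}E_{j}^{\hom})$ meets $\partial({}_{\mathbb{P}}E)$; part (i) then yields $e({}_{\mathbb{P}}E_{j}^{\hom})\subset\partial_{\infty}({}_{\mathbb{P}}E)$.

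The main obstacle is precisely the boundary bookkeeping, i.e.\ passing from $e({}_{\mathbb{P}}E_{j}^{\hom})\subset{}_{\mathbb{P}}E\cap\mathbb{P}^{n,0}$ to containment in the topological boundary $\partial({}_{\mathbb{P}}E)$. In general a chain control set may contain interior points on $\mathbb{P}^{n,0}$ (already for $n=1$, where $\mathbb{P}^{n,0}$ is a single point that can lie in the interior of an arc, so that $\partial_{\infty}$ is empty), so this step cannot be purely set-theoretic; it must exploit the normal behavior of the invariant hypersurface $\mathbb{P}^{n,0}$ through the linear transverse equation for $\zeta$ and the uniformity of its exponent over the connected chain transitive set. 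Coordinating the $\omega$-limit extraction in (ii) with the given boundary point $p_{0}$ is the delicate part, and making the normal-hyperbolicity statement precise is where I would expect the technical work to concentrate.
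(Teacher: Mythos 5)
Your set-theoretic core is exactly the paper's argument. For (i): by Proposition \ref{Proposition_general}(ii) the restriction of (\ref{Pn}) to the closed invariant set $\mathbb{P}^{n,0}$ is the projectivized homogeneous part, so $e(_{\mathbb{P}}E_{j}^{\hom})$ is a chain control set of the restricted system; fixing $z\in\partial_{\infty}(_{\mathbb{P}}E)\cap e(_{\mathbb{P}}E_{j}^{\hom})$ (which lies in $_{\mathbb{P}}E$ since chain control sets are closed) and concatenating chains through $z$, the union $_{\mathbb{P}}E\cup e(_{\mathbb{P}}E_{j}^{\hom})$ satisfies properties (i) and (ii) of Definition \ref{intro2:ccs}, and maximality gives $e(_{\mathbb{P}}E_{j}^{\hom})\subset\,_{\mathbb{P}}E$. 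For (ii): the $\omega$-limit set of a trajectory kept inside the compact set $_{\mathbb{P}}E\cap\mathbb{P}^{n,0}$ is nonvoid and lies in a chain control set of the restricted system (the paper quotes \cite[Corollary 4.3.12]{ColK00} here), reducing (ii) to (i). Up to this point your proof and the paper's coincide.

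The genuine gap is your transverse-exponent mechanism for the remaining step, from $e(_{\mathbb{P}}E_{j}^{\hom})\subset\,_{\mathbb{P}}E\cap\mathbb{P}^{n,0}$ to containment in the topological boundary $\partial(_{\mathbb{P}}E)$. You assert that the hypothesis $p\in\partial(_{\mathbb{P}}E)$ \emph{certifies} that the normal exponent is nonzero, and that this nonvanishing propagates with constant sign over the connected set $e(_{\mathbb{P}}E_{j}^{\hom})$. Neither assertion is substantiated, and the first is not a valid inference: membership in a topological boundary cannot force a Lyapunov-type quantity to be nonzero. Worse, your own chart computation shows the dichotomy is empty exactly where the lemma is needed: from $\dot{\zeta}=-\zeta\,\dot{x}_{i}/x_{i}$ the normal exponent is minus the radial growth rate of the homogeneous trajectory, and the lemma feeds into Theorem \ref{Theorem_main5}, where the relevant points at infinity come from eigenspaces $\mathbf{E}(\Phi_{v}(\tau,0);1)$, i.e.\ zero Floquet exponent, hence zero normal exponent. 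Already for linear systems (\ref{linear}) both alternatives occur with this same zero exponent: by (\ref{infinity_linear}) the boundary at infinity is $e(\pi_{\mathbb{P}}\mathbf{E}^{0})$, the projectivized \emph{center} subspace, and these are boundary points of the chain control set (e.g.\ for $A$ with eigenvalues $\pm i$ and $-1$), whereas for $\dot{x}=u$ on $\mathbb{R}$ the chain control set is all of $\mathbb{P}^{1}$ and the zero-exponent point at infinity is interior --- your own $n=1$ remark is precisely this example. So the sign of the transverse exponent cannot decide boundary versus interior, and your part (ii) inherits the same hole, since nothing guarantees that the chain control set containing the $\omega$-limit set of the trajectory through $p_{0}$ meets $\partial(_{\mathbb{P}}E)$. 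The paper makes no normal-hyperbolicity argument at all: it obtains the boundary statement from the identity $_{\mathbb{P}}E\cap\mathbb{P}^{n,0}=\partial(_{\mathbb{P}}E)\cap\mathbb{P}^{n,0}$ asserted in its proof of part (ii), i.e.\ points of a chain control set lying on $\mathbb{P}^{n,0}$ are treated as boundary points. You correctly identified this as the delicate point of the lemma, but the spectral argument you substitute does not close it.
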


\begin{proof}
(i) Recall from Proposition \ref{Proposition_general}(ii) that $e(_{\mathbb{P}%
}E_{j}^{\hom})$ is a chain control set of the system restricted to
$\mathbb{P}^{n,0}$. We will show that the set $_{\mathbb{P}}E^{\prime
}:=\,_{\mathbb{P}}E\cup e(_{\mathbb{P}}E_{j}^{\hom})$ satisfies the properties
(i) and (ii) of a chain control set in $\mathbb{P}^{n}$. Then the maximality
property (iii) of the chain control set $_{\mathbb{P}}E$ implies that
$_{\mathbb{P}}E^{\prime}=\,_{\mathbb{P}}E$ showing that $e(_{\mathbb{P}}%
E_{j}^{\hom})\subset\partial_{\infty}(_{\mathbb{P}}E)$.

It is clear that $_{\mathbb{P}}E^{\prime}$ satisfies (i), since this holds for
$_{\mathbb{P}}E$ and $e(_{\mathbb{P}}E_{j}^{\hom})$. For property (ii),
consider $x\in\,_{\mathbb{P}}E$ and $y\in e(_{\mathbb{P}}E_{j}^{\hom})$ and
$\varepsilon,T>0$. Fix $z\in\partial_{\infty}(_{\mathbb{P}}E)\cap
e(_{\mathbb{P}}E_{j}^{\hom})=\,_{\mathbb{P}}E\cap e(_{\mathbb{P}}E_{j}^{\hom
})$. There are controlled $(\varepsilon,T)$-chains $\zeta_{1}$ and $\zeta_{2}$
from $x$ to $z$ and from $z$ to $x$, resp. For the system restricted to
$\mathbb{P}^{n,0}$, there exist controlled $(\varepsilon,T)$-chains $\zeta
_{3}$ and $\zeta_{4}$ from $z$ to $y$ and from $y$ to $z$, resp. Then the
concatenations $\zeta_{3}\circ\zeta_{1}$ and $\zeta_{3}\circ\zeta_{4}$ are
controlled $(\varepsilon,T)$-chains from $x$ to $y$ and from $y$ to $x$, resp.
This concludes the proof of assertion (i) since $\varepsilon,T>0$ are arbitrary.

(ii) Let $x\in\partial_{\infty}(_{\mathbb{P}}E)$. Then there exists a control
$u\in\mathcal{U}$ with $\pi_{\mathbb{P}}\varphi(t,x,u)\in\partial_{\infty
}(_{\mathbb{P}}E)=\,_{\mathbb{P}}E\cap\mathbb{P}^{n,0}$ for all $t\geq0$ by
property (i) of chain control sets and invariance of $\mathbb{P}^{n,0}$. Since
$_{\mathbb{P}}E\cap\mathbb{P}^{n,0}=\partial(_{\mathbb{P}}E)\cap
\mathbb{P}^{n,0}$ is compact, it follows that the $\omega$-limit set%
\[
\omega_{\mathbb{P}}(u,x):=\left\{  \left.  y=\lim\nolimits_{k\rightarrow
\infty}\pi_{\mathbb{P}}\varphi(t_{k},x,u)\right\vert t_{k}\rightarrow
\infty\right\}  \subset\,_{\mathbb{P}}E\cap\mathbb{P}^{n,0}.
\]
is nonvoid. Hence Colonius and Kliemann \cite[Corollary 4.3.12]{ColK00}
implies that there exists a chain control set of the system restricted to
$\mathbb{P}^{n,0}$ containing $\omega_{\mathbb{P}}(u,x)$. Thus there is a
chain control set $_{\mathbb{P}}E_{j}^{\hom}$ in $\mathbb{P}^{n-1}$ of the
homogeneous part with $\partial_{\infty}(_{\mathbb{P}}E)\cap e(_{\mathbb{P}%
}E_{j}^{\hom})\not =\varnothing$. Now the assertion follows from (i).
\end{proof}

The next theorem is the main result on the control sets $D$ with nonvoid
interior in $\mathbb{R}^{n}$ in the nonhyperbolic case.

\begin{theorem}
\label{Theorem_main5}Assume that the affine control system (\ref{affine}) is
nonhyperbolic. Furthermore, let the control range $\Omega$ be a compact convex
neighborhood of the origin and assume that there is $g(u)\in S_{\sigma}%
\cap\mathrm{int}(\mathcal{S})$ with $1\not \in \mathrm{spec}(\Phi_{u}%
(\sigma,0))$ for some $\sigma>0$.

Then there exists a single chain control set $_{\mathbb{P}}E$ in
$\mathbb{P}^{n}$ containing the control sets $\pi_{\mathbb{P}}D^{1}$ for all
control sets $D$ with nonvoid interior in $\mathbb{R}^{n}$. Furthermore, the
boundary at infinity $\partial_{\infty}(_{\mathbb{P}}E)$ contains all
$\partial_{\infty}(D)$ and the chain control sets $e(_{\mathbb{P}}E_{j}^{\hom
})$ where $_{\mathbb{P}}E_{j}^{\hom}$ are the chain control sets in
$\mathbb{P}^{n-1}$ for the homogeneous part (\ref{hom_n}) with $\partial
_{\infty}(D)\cap e(_{\mathbb{P}}E_{j}^{\hom})\not =\varnothing$ for some $D$.
\end{theorem}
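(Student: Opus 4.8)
The plan is to embed every control set into a chain control set of the projective system (\ref{Pn}), then to show by a continuation argument in the \emph{compact} space $\mathbb{P}^{n}$ that all these chain control sets coincide; the boundary assertions will follow from Lemma~\ref{Lemma_chains}. First I would record the easy inclusions. Let $D\subset\mathbb{R}^{n}$ be a control set with nonvoid interior. By Proposition~\ref{Proposition_general}(i) the set $\pi_{\mathbb{P}}D^{1}$ is a control set with nonvoid interior of (\ref{Pn}) lying in the open chart $\mathbb{P}^{n,1}$, where (\ref{Pn}) is locally accessible by Theorem~\ref{Theorem_submanifolds}(i). Hence, by the result of Kawan cited in Subsection~\ref{Subsection2.1} together with the fact that distinct chain control sets are disjoint, $\pi_{\mathbb{P}}D^{1}$ is contained in a unique chain control set ${}_{\mathbb{P}}E_{D}$ of (\ref{Pn}). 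Everything reduces to showing that ${}_{\mathbb{P}}E_{D}$ is independent of $D$.

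For this, fix the control set $D_{0}$ produced by Theorem~\ref{Theorem_main2}(i), with $g(u_{0})\in\mathcal{S}_{\sigma}\cap\mathrm{int}(\mathcal{S})$, $1\notin\mathrm{spec}(\Phi_{u_{0}}(\sigma,0))$ and unique fixed point $x_{0}\in\mathrm{int}(D_{0})$. Given an arbitrary control set $D$ with nonvoid interior, Proposition~\ref{Proposition2.5}(i) provides $g(u_{D})\in\mathrm{int}(\mathcal{S})$ with a fixed point $x_{D}\in\mathrm{int}(D)$, so that $(x_{D},1)\in\ker(I-\hat\Phi_{u_{D}})$, where $\hat\Phi_{u}$ denotes the time-$\tau$ map of the embedded system (\ref{hom_n+1}), with lower-right entry $1$ and upper-right block $b_{u}:=\int_{0}^{\tau}\Phi_{u}(\tau,s)(Cu(s)+d)\,ds$. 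Using Lemma~\ref{Lemma_path_a2} I would join $g(u_{0})$ to $g(u_{D})$ by a path $p(\alpha)=g(u^{\alpha})$ in $\mathrm{int}(\mathcal{S})$ with periods $\tau_{\alpha}$, $u^{0}=u_{0}$, $u^{1}=u_{D}$. Since $\alpha\mapsto\det(I-\Phi_{u^{\alpha}}(\tau_{\alpha},0))$ is piecewise analytic (Lemma~\ref{Lemma_path_a1}) and nonzero at $\alpha=0$, the escape set $\{\alpha\in[0,1]\mid 1\in\mathrm{spec}(\Phi_{u^{\alpha}}(\tau_{\alpha},0))\}=\{\alpha_{1}<\cdots<\alpha_{N}\}$ is finite. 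On each complementary interval there is a unique $\tau_{\alpha}$-periodic solution $x^{\alpha}=(I-\Phi_{u^{\alpha}}(\tau_{\alpha},0))^{-1}b_{u^{\alpha}}$ depending continuously on $\alpha$ (Proposition~\ref{Proposition_analytic_new}(ii)); as $g(u^{\alpha})x^{\alpha}=x^{\alpha}$ with $g(u^{\alpha})\in\mathrm{int}(\mathcal{S})$, Proposition~\ref{Proposition2.5}(ii) gives $x^{\alpha}\in\mathrm{int}(D_{\alpha})$, whence $p_{\alpha}:=\pi_{\mathbb{P}}(x^{\alpha},1)\in\mathrm{int}(\pi_{\mathbb{P}}D_{\alpha}^{1})\subset{}_{\mathbb{P}}E_{D_{\alpha}}$. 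A connectedness (open--closed) argument as in the proof of Theorem~\ref{Theorem_hyperbolic1}---using continuity of $p_{\alpha}$, that $p_{\alpha}$ lies in the \emph{interior} of a control set, that chain control sets are closed, and that a control set with nonvoid interior determines a unique chain control set---shows $\alpha\mapsto{}_{\mathbb{P}}E_{D_{\alpha}}$ is constant on each such interval.

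The crux is crossing the escape parameters $\alpha_{i}$, and here the compactification does the work. As $\alpha\to\alpha_{i}^{\pm}$, the estimate $(I-\Phi_{u^{\alpha_{i}}}(\tau_{\alpha_{i}},0))\frac{x^{\alpha}}{\Vert x^{\alpha}\Vert}\to 0$ (from $\Phi_{u^{\alpha}}\to\Phi_{u^{\alpha_{i}}}$ and boundedness of $b_{u^{\alpha}}$, exactly as in the unbounded case in the proof of Theorem~\ref{Theorem_main2}(ii)) forces every cluster point $L^{\pm}$ of $p_{\alpha}$ into $\pi_{\mathbb{P}}\ker(I-\hat\Phi_{u^{\alpha_{i}}})$: a finite limit gives a fixed point of $g(u^{\alpha_{i}})$, hence an eigenvector $(x^{\ast},1)$, and an infinite limit gives a direction in $e(\pi_{\mathbb{P}}\mathbf{E}(\Phi_{u^{\alpha_{i}}}(\tau_{\alpha_{i}},0);1))$. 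Now $\pi_{\mathbb{P}}\ker(I-\hat\Phi_{u^{\alpha_{i}}})$ is a projective subspace, hence connected, and consists of $\tau_{\alpha_{i}}$-periodic points of (\ref{Pn}) for the single control $u^{\alpha_{i}}$; as in the proof of Proposition~\ref{Theorem_main3} it therefore lies in one chain control set ${}_{\mathbb{P}}F_{i}$ of (\ref{Pn}). Since $L^{-}\in{}_{\mathbb{P}}E_{D_{\alpha}}$ (the value left of $\alpha_{i}$, which is closed) and $L^{-}\in{}_{\mathbb{P}}F_{i}$, these chain control sets meet and coincide, and likewise on the right, so the constant value agrees across $\alpha_{i}$. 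Running through $\alpha_{1},\dots,\alpha_{N}$ and treating the endpoints the same way---$\pi_{\mathbb{P}}\ker(I-\hat\Phi_{u_{0}})\ni\pi_{\mathbb{P}}(x_{0},1)$ and $\pi_{\mathbb{P}}\ker(I-\hat\Phi_{u_{D}})\ni\pi_{\mathbb{P}}(x_{D},1)$, which identify the value with ${}_{\mathbb{P}}E_{D_{0}}$ at $\alpha=0$ and with ${}_{\mathbb{P}}E_{D}$ at $\alpha=1$---I conclude ${}_{\mathbb{P}}E_{D}={}_{\mathbb{P}}E_{D_{0}}=:{}_{\mathbb{P}}E$ for every $D$. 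This is the single chain control set containing all $\pi_{\mathbb{P}}D^{1}$. This step is the main obstacle: the finite periodic points escape to $\mathbb{P}^{n,0}$ precisely at the $\alpha_{i}$, and the gluing works only because the eigenvalue-$1$ eigenspace of $\hat\Phi_{u^{\alpha_{i}}}$ is one connected family of common-control periodic points straddling $\mathbb{P}^{n,1}$ and $\mathbb{P}^{n,0}$.

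Finally I would read off the boundary statements. As established within the proof of Lemma~\ref{Lemma_chains}, $\mathrm{int}({}_{\mathbb{P}}E)\cap\mathbb{P}^{n,0}=\varnothing$, so $\partial_{\infty}({}_{\mathbb{P}}E)={}_{\mathbb{P}}E\cap\mathbb{P}^{n,0}$. For any control set $D$ with nonvoid interior, $\partial_{\infty}(D)=\partial(\pi_{\mathbb{P}}D^{1})\cap\mathbb{P}^{n,0}\subset\overline{\pi_{\mathbb{P}}D^{1}}\cap\mathbb{P}^{n,0}\subset{}_{\mathbb{P}}E\cap\mathbb{P}^{n,0}=\partial_{\infty}({}_{\mathbb{P}}E)$, proving $\partial_{\infty}({}_{\mathbb{P}}E)\supset\partial_{\infty}(D)$ for all $D$. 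If ${}_{\mathbb{P}}E_{j}^{\hom}$ is a chain control set of the homogeneous part in $\mathbb{P}^{n-1}$ with $\partial_{\infty}(D)\cap e({}_{\mathbb{P}}E_{j}^{\hom})\neq\varnothing$ for some $D$, then a fortiori $\partial_{\infty}({}_{\mathbb{P}}E)\cap e({}_{\mathbb{P}}E_{j}^{\hom})\neq\varnothing$, and Lemma~\ref{Lemma_chains}(i) yields $e({}_{\mathbb{P}}E_{j}^{\hom})\subset\partial_{\infty}({}_{\mathbb{P}}E)$, completing the argument.
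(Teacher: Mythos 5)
Your overall architecture coincides with the paper's (a path of periodic points joining two interior-semigroup elements, constancy of the associated chain control sets in $\mathbb{P}^{n}$ between the parameters where $1$ enters the spectrum, gluing across those parameters via the eigenvalue-$1$ eigenspace, and boundary claims read off from Lemma \ref{Lemma_chains}), and your gluing step is actually cleaner than the paper's: by working with the embedded map $\hat\Phi_{u}=\left(\begin{smallmatrix}\Phi_{u} & b_{u}\\ 0 & 1\end{smallmatrix}\right)$ and compactness of $\mathbb{P}^{n}$ you replace the paper's case analysis (Lemma \ref{Lemma_path3} when $b_{0}\notin\operatorname{Im}(I-\Phi)$; Lemma \ref{Lemma_image} with bounded/unbounded subcases when $b_{0}\in\operatorname{Im}(I-\Phi)$) by the single observation that every cluster point of $\pi_{\mathbb{P}}(x^{\alpha},1)$ lies in the connected projective subspace $\pi_{\mathbb{P}}\ker(I-\hat\Phi_{u^{\alpha_{i}}})$, a set of common-control periodic points and hence inside one chain control set. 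However, there is a genuine gap at the step that makes the whole continuation possible: the finiteness of the escape set. You take the \emph{unperturbed} path $u^{\alpha}$ from Lemma \ref{Lemma_path_a2} and argue that, since $\alpha\mapsto\det(I-\Phi_{u^{\alpha}}(\tau_{\alpha},0))$ is piecewise analytic and nonzero at $\alpha=0$, its zero set is finite. This does not follow: piecewise analyticity yields finitely many zeros only on those analytic pieces where the function is not identically zero, and nonvanishing at $\alpha=0$ controls only the first piece. If the determinant happens to vanish at a junction between pieces, nothing prevents it from vanishing identically on the next piece (this occurs whenever the accumulated monodromy at the junction has a fixed vector that stays fixed under $e^{sA(v^{j})}$ for all $s$), in which case the escape set contains an interval, no periodic points $x^{\alpha}$ exist there, and your continuation argument collapses. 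This is precisely why the paper proves Proposition \ref{Proposition_analytic_new}(i): the switching times are perturbed so that every analytic piece \emph{starts} with nonzero determinant, at the price that the path only ends $\varepsilon$-close to $g(u_{D})$.

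Switching to the perturbed path is not free for you, because your elegant endpoint identification (that $\pi_{\mathbb{P}}(x_{D},1)\in\pi_{\mathbb{P}}\ker(I-\hat\Phi_{u_{D}})$ ties the terminal value of the continuation to ${}_{\mathbb{P}}E_{D}$) uses that the path ends \emph{exactly} at $g(u_{D})$; with the perturbed path it ends at some $g(w^{1})$ with $1\notin\mathrm{spec}(\Phi_{w^{1}}(\tau_{1},0))$, whose unique fixed point need not be close to $x_{D}$ when $1\in\mathrm{spec}(\Phi_{u_{D}}(\tau_{D},0))$ (the paper handles this with its ``for $\varepsilon>0$ small enough, $x^{1}\in\mathrm{int}(D_{1})$'' claim). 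Your argument can be repaired in your own spirit: run the continuation along perturbed paths with $\varepsilon_{k}\to0$; for each $k$ your crossing argument gives that the chain control set equals ${}_{\mathbb{P}}E_{D_{0}}$ all the way to the endpoint, so the endpoint fixed points $x^{w^{1}_{k}}$ satisfy $\pi_{\mathbb{P}}(x^{w^{1}_{k}},1)\in{}_{\mathbb{P}}E_{D_{0}}$; since $\Phi_{w^{1}_{k}}\to\Phi_{u_{D}}$ and $b_{w^{1}_{k}}\to b_{u_{D}}$, every cluster point of these projective points lies in $\pi_{\mathbb{P}}\ker(I-\hat\Phi_{u_{D}})$, which contains $\pi_{\mathbb{P}}(x_{D},1)\in{}_{\mathbb{P}}E_{D}$ and lies in a single chain control set, whence ${}_{\mathbb{P}}E_{D_{0}}={}_{\mathbb{P}}E_{D}$. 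With that patch (and with Proposition \ref{Proposition_analytic_new}(i) cited in place of your finiteness claim) the proof is complete; as written, it is not.
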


\begin{proof}
By Proposition \ref{Proposition_periodicODE}(i) there is a unique $\sigma
$-periodic solution of the affine system with $g(u)x^{0}=x^{0}$. Proposition
\ref{Proposition2.5} implies that $x^{0}$ is in the interior of a control set
$D_{0}$. Now let $D_{1}$ be any control set in $\mathbb{R}^{n}$ with nonvoid
interior. It suffices to show that there is a chain control set $_{\mathbb{P}%
}E$ in $\mathbb{P}^{n}$ containing $\pi_{\mathbb{P}}D_{0}^{1}$ and
$\pi_{\mathbb{P}}D_{1}^{1}$ and that its boundary at infinity $\partial
_{\infty}(_{\mathbb{P}}E)$ contains all chain control sets $e(_{\mathbb{P}%
}E_{j}^{\hom})$ with $\partial_{\infty}(D_{1})\cap e(_{\mathbb{P}}E_{j}^{\hom
})\not =\varnothing$.

Pick $x^{1}\in\mathrm{int}(D_{1})$. Then Proposition \ref{Proposition2.5}(i)
implies that there are $\tau_{1}>0$ and $g_{1}=g(u^{1})\in\mathcal{S}%
_{\tau_{1}}\cap\mathrm{int}(\mathcal{S})$ with $x^{1}=g(u^{1})x^{1}$.
Proposition \ref{Proposition_analytic_new}(i) yields $\tau_{\alpha}$-periodic
controls $w^{\alpha}\in\mathcal{U}_{pc}$ and continuous paths $p_{1}%
:[0,1]\rightarrow\mathrm{int}(\mathcal{S})$ with $p_{1}(0)=g(u),\,p_{1}%
(\alpha)=g(w^{\alpha})$ for $\alpha\in\lbrack0,1]$ and $p_{1}^{\hom
}:[0,1]\rightarrow\,_{\mathbb{R}}\mathcal{S}^{\hom}$ with $p_{1}^{\hom
}(0)=\Phi_{u}(\sigma,0),\,p_{1}^{\hom}(\alpha)=\Phi_{w^{\alpha}}(\tau_{\alpha
},0)$ such that%
\[
\left\Vert p_{1}^{\hom}(1)-\Phi_{u^{1}}(\tau_{1},0)\right\Vert <\varepsilon
,\,\left\Vert p_{1}(1)-g(u^{1})\right\Vert <\varepsilon
\]
and $1\not \in \mathrm{spec}(\Phi_{w^{\alpha}}(\tau_{\alpha},0)$ for all but
at most finitely many $\alpha\in\lbrack0,1]$. Denote the $\alpha$-values with
$1\in\mathrm{spec}(\Phi_{w^{\alpha}}(\tau_{\alpha},0))$ by $0<\alpha
_{1}<\cdots<\alpha_{r}<1$, where the last inequality holds without loss of
generality. The continuity and smoothness properties from Proposition
\ref{Proposition_analytic_new}(i) hold for $w^{\alpha},\tau_{\alpha}%
,\Phi_{w^{\alpha}}(\tau_{\alpha},0)\in\,_{\mathbb{R}}\mathcal{S}^{\hom}$, and
$g(w^{\alpha})\in\mathcal{S}$. For $\alpha$ with $1\not \in \mathrm{spec}%
(\Phi_{w^{\alpha}}(\tau_{\alpha},0)$ Proposition
\ref{Proposition_analytic_new}(ii) shows that there are unique $\tau_{\alpha}%
$-periodic solutions with initial values $x^{\alpha}$ depending continuously
on $\alpha$. Define%
\[
A:=\{\alpha\in\lbrack0,1]\left\vert \text{ }1\not \in \mathrm{spec}%
(\Phi_{w^{\alpha}}(\tau_{\alpha},0))\right.  \}.
\]
The set $A$ consists of $r+1$ intervals. For $\alpha\in A$, Proposition
\ref{Proposition2.5}(ii) implies that $x^{\alpha}\in\mathrm{int}(D^{\alpha})$
for some control set $D^{\alpha}$ since $g(w^{\alpha})\in\mathrm{int}%
(\mathcal{S})$. For each $\alpha$ in an interval contained in $A$, the
$x^{\alpha}$ depend continuously on $\alpha$, hence they are contained in the
interior of a single control set. By construction, $x^{0}\in\mathrm{int}%
(D_{0})$ and, for $\varepsilon>0$ small enough, $x^{1}\in\mathrm{int}(D_{1})$.
Denote the other control sets containing the $x^{\alpha}$ by $D_{i},i\geq2$.
By Theorem \ref{Theorem_main2} the control sets $D_{i}$ are unbounded, hence
their boundary at infinity $\partial_{\infty}(D_{i})$ is nonvoid.

The control sets $\pi_{\mathbb{P}}D_{i}^{1}$ in $\mathbb{P}^{n,1}$ are
contained in chain control sets $_{\mathbb{P}}E_{i}\subset\mathbb{P}^{n}$ and
it follows that $\partial_{\infty}(_{\mathbb{P}}E_{i})\supset\partial_{\infty
}(D_{i})$ is nonvoid. Thus Lemma \ref{Lemma_chains}(ii) implies that
$\partial_{\infty}(_{\mathbb{P}}E_{i})$ contains a chain control set
$e(_{\mathbb{P}}E_{j}^{\hom})$ where $_{\mathbb{P}}E_{j}^{\hom}$ is a chain
control set of the homogeneous part in $\mathbb{P}^{n-1}$. By Lemma
\ref{Lemma_chains}(i), $\partial_{\infty}(_{\mathbb{P}}E_{i})$ contains every
chain control set of the homogeneous part that it intersects. The theorem
follows from the next claim.

\textbf{Claim. }All chain control sets $_{\mathbb{P}}E_{i},i\geq2$, in
$\mathbb{P}^{n}$ coincide.

For every point $\alpha_{i}\in(0,1)\setminus A$ there are control sets which
we denote by $D_{i}$ and $D_{i+1}$ such that all $\alpha$ in a neighborhood of
$\alpha_{i}$ satisfy $x^{\alpha}\in\mathrm{int}(D_{i})$ for $\alpha<\alpha
_{i}$ and $x^{\alpha}\in\mathrm{int}(D_{i+1})$ for $\alpha>\alpha_{i}$.

We have to show that the chain control sets $_{\mathbb{P}}E_{i}$ for
$\alpha<\alpha_{i}$ and $_{\mathbb{P}}E_{i+1}$ for $\alpha>\alpha_{i}$
coincide. The projected eigenspace $\pi_{\mathbb{P}}\mathbf{E}(\Phi
_{u^{\alpha_{i}}}(\tau_{\alpha_{i}},0);1))$ consists of $\tau_{\alpha_{i}}%
$-periodic solutions for the $\tau_{\alpha_{i}}$-periodic control
$u^{\alpha_{i}}$ and hence is contained in a chain control set $_{\mathbb{P}%
}E_{j}^{\hom}$ in $\mathbb{P}^{n-1}$. We will show that
\[
e(_{\mathbb{P}}E_{j}^{\hom})\subset\partial_{\infty}(_{\mathbb{P}}E_{i}%
)\cap\partial_{\infty}(_{\mathbb{P}}E_{i+1}),
\]
which implies that $_{\mathbb{P}}E_{i}$ and $_{\mathbb{P}}E_{i+1}$ have
nonvoid intersection and hence coincide.

First consider parameters $\beta_{k}\rightarrow\alpha_{i},\beta_{k}<\alpha
_{i}$. As stated above, the points $x^{\beta_{k}}$ with $x^{\beta_{k}%
}=g(u^{\beta_{k}})x^{\beta_{k}}$ satisfy $x^{\beta_{k}}\in\mathrm{int}(D_{i})$.

\textbf{Case 1}. $\int_{0}^{\tau_{\alpha_{i}}}\Phi_{u^{\alpha_{i}}}%
(\tau_{\alpha_{i}},s)\left[  Cu^{\alpha_{i}}(s)+d\right]  ds\not \in
\operatorname{Im}(I-\Phi_{u^{\alpha_{i}}}(\tau_{\alpha_{i}},0))$.

Lemma \ref{Lemma_path3} implies that the $x^{\beta_{k}}\in D_{i}$ satisfy%
\begin{equation}
\left\Vert x^{\beta_{k}}\right\Vert \rightarrow\infty\text{ and }%
\frac{x^{\beta_{k}}}{\left\Vert x^{\beta_{k}}\right\Vert }\rightarrow
\mathbf{E}(\Phi^{\alpha_{i}}(\tau_{\alpha_{i}},0);1),\label{7.5c}%
\end{equation}
and by Lemma \ref{Lemma_chains}(i) $e(_{\mathbb{P}}E_{j}^{\hom})\subset
\partial_{\infty}(_{\mathbb{P}}E_{i})$ since%
\begin{equation}
\varnothing\not =\partial_{\infty}(D_{i})\cap e\left(  \pi_{\mathbb{P}%
}\mathbf{E}(\Phi^{\alpha_{i}}(\tau_{\alpha_{i}},0);1)\right)  \subset
\partial_{\infty}(_{\mathbb{P}}E_{i})\cap\,e(_{\mathbb{P}}E_{j}^{\hom
}).\label{7.6}%
\end{equation}

\textbf{Case 2.} $\int_{0}^{\tau_{\alpha_{i}}}\Phi_{u^{\alpha_{i}}}%
(\tau_{\alpha_{i}},s)\left[  Cu^{\alpha_{i}}(s)+d\right]  ds\in
\operatorname{Im}(I-\Phi_{u^{\alpha_{i}}}(\tau_{\alpha_{i}},0))$.

Let, for $k=1,2,\ldots$,%
\[
A_{k}:=I-\Phi_{u^{\beta_{k}}}(\tau_{\beta_{k}},0),\quad b_{k}:=\int_{0}%
^{\tau_{\beta_{k}}}\Phi_{u^{\beta_{k}}}(\tau_{\beta_{k}},s)\left(
Cu^{\beta_{k}}(s)+d\right)  ds.
\]
Then $x^{\beta_{k}}=g(u^{\beta_{k}})x^{\beta_{k}}$ implies $A_{k}x^{\beta_{k}%
}=b_{k}$ and by Proposition \ref{Proposition_periodicODE}(iii), (iv)%
\[
A_{k}\rightarrow A_{0}:=I-\Phi_{u^{\alpha_{i}}}(\tau_{\alpha_{i}}%
,0),\,b_{k}\rightarrow b_{0}:=\int_{0}^{\tau_{\alpha_{i}}}\Phi_{u^{\alpha_{i}%
}}(\tau_{\alpha_{i}},s)\left(  Cu^{\alpha_{i}}(s)+d\right)  ds.
\]
If $x^{\beta_{k}}$ remains bounded, we may assume that $x^{\beta_{k}%
}\rightarrow y^{0}$ for some $y^{0}\in\overline{D_{i}}\subset\mathbb{R}^{n}$
and hence $A_{0}y^{0}=b_{0}$. By Lemma \ref{Lemma_image} there are $x^{k}\in
y^{0}+\mathbf{E}(\Phi_{u^{\alpha_{i}}}(\tau_{\alpha_{i}},0);1)$ with
$\left\Vert x^{k}\right\Vert \rightarrow\infty$ and $\frac{x^{k}}{\left\Vert
x^{k}\right\Vert }\rightarrow\mathbf{E}(\Phi^{\alpha_{i}}(\tau_{\alpha_{i}%
},0);1)$ and again (\ref{7.5c}) follows implying $e(_{\mathbb{P}}E_{j}^{\hom
})\subset\partial_{\infty}(_{\mathbb{P}}E_{i})$.

If $x^{\beta_{k}}$ becomes unbounded, we obtain%
\[
\left\Vert A_{0}\frac{x^{\beta_{k}}}{\left\Vert x^{\beta_{k}}\right\Vert
}\right\Vert \leq\left\Vert A_{0}-A_{k}\right\Vert +\left\Vert A_{k}%
\frac{x^{\beta_{k}}}{\left\Vert x^{\beta_{k}}\right\Vert }\right\Vert
=\left\Vert A_{0}-A_{k}\right\Vert +\frac{\left\Vert b_{k}\right\Vert
}{\left\Vert x^{\beta_{k}}\right\Vert }\rightarrow0,
\]
and (\ref{7.5c}) follows implying $e(_{\mathbb{P}}E_{j}^{\hom})\subset
\partial_{\infty}(_{\mathbb{P}}E_{i})$.

We have shown this inclusion using $\beta_{k}\rightarrow\alpha_{i},\beta
_{k}<\alpha_{i}$. The same arguments can be applied to parameters $\beta
_{k}\rightarrow\alpha_{i},\beta_{k}>\alpha_{i}$, showing that also for the
chain control set $_{\mathbb{P}}E_{i+1}$ the boundary at infinity
$\partial_{\infty}(_{\mathbb{P}}E_{i+1})$ contains the chain control set
$e(_{\mathbb{P}}E_{j}^{\hom})$. This proves the claim and concludes the proof
of the theorem.
\end{proof}

The following controlled linear oscillator illustrates Theorem
\ref{Theorem_main5}.

\begin{example}
\label{Example7.14}Consider the affine control system (cf. \cite[Example
5.17]{ColRS})%
\[
\left(
\begin{array}
[c]{c}%
\dot{x}\\
\dot{y}%
\end{array}
\right)  =\left(
\begin{array}
[c]{cc}%
0 & 1\\
-1-u & -3
\end{array}
\right)  \left(
\begin{array}
[c]{c}%
x\\
y
\end{array}
\right)  +u(t)\left(
\begin{array}
[c]{c}%
0\\
1
\end{array}
\right)  +\left(
\begin{array}
[c]{c}%
0\\
d
\end{array}
\right)  ,\quad u(t)\in\lbrack-\rho,\rho],
\]
where $\rho\in\left(  1,\frac{5}{4}\right)  $ and $d<1$. The equilibria are,
for $u\in\lbrack-\rho,-1)$ and $u\in(-1,\rho]$,%
\[
\mathcal{C}_{1}=\left\{  \left.  \left(
\begin{array}
[c]{c}%
x\\
0
\end{array}
\right)  \right\vert x\in\left[  \frac{d-\rho}{1-\rho},\infty\right)
\right\}  ,\quad\mathcal{C}_{2}=\left\{  \left.  \left(
\begin{array}
[c]{c}%
x\\
0
\end{array}
\right)  \right\vert x\in\left(  -\infty,\frac{d+\rho}{1+\rho}\right]
\right\}  ,
\]
resp. The equilibria in $\mathcal{C}_{1}$ are hyperbolic, since here the
eigenvalues of $A(u)$ are $\lambda_{1}(u)<0<\lambda_{2}(u)$. The equilibria in
$\mathcal{C}_{2}$ are stable nodes since here $\lambda_{1}(u)<\lambda
_{2}(u)<0$. For $u^{0}=-1$ the matrix $A(-1)=\left(
\begin{array}
[c]{cc}%
0 & 1\\
0 & -3
\end{array}
\right)  $ has the eigenvalue $\lambda_{2}(-1)=0$ with eigenspace
$\mathbb{R}\times\{0\}$, hence $e^{A(-1)t}$ has the eigenvalue $1$. There are
control sets $D_{1}\not =D_{2}$ containing the equilibria in $\mathcal{C}_{1}$
and $\mathcal{C}_{2}$, resp., in the interior. For $u^{k}\nearrow u^{0}=-1$,
the equilibria in $D_{1}$ satisfy $(x_{u^{k}},0)\rightarrow(\infty,0)$ and for
$u^{k}\searrow u^{0}=-1$, the equilibria in $D_{2}$ satisfy $(x_{u^{k}%
},0)\rightarrow(-\infty,0)$ for $k\rightarrow\infty$. \ There is a single
chain control set $_{\mathbb{P}}E$ in $\mathbb{P}^{2}$ containing the images
of $D_{1}$ and $D_{2}$ since the eigenspace $\mathbf{E}(e^{A(-1)\tau
};1)=\mathbb{R}\times\{0\}$ satisfies
\[
e(\pi_{\mathbb{P}}\mathbf{E}(e^{A(-1)\tau};1))\subset\partial_{\infty}%
(D_{1})\cap\partial_{\infty}(D_{2})\quad\text{for any }\tau>0.
\]
Concerning the homogeneous part in $\mathbb{P}^{1}$ the projectivized
eigenspace $\pi_{\mathbb{P}}\mathbf{E}(e^{A(-1)\tau};1)$ is contained in the
invariant control set $_{\mathbb{P}}D_{2}^{\hom}=\pi_{\mathbb{P}}\{\left.
(x,\lambda_{2}(u)x)^{\top}\right\vert x\not =0,u\in\lbrack-\rho,\rho]\}$ and
$_{\mathbb{P}}D_{2}^{\hom}$ is the projection of a control set $_{\mathbb{R}%
}D_{2}^{\hom}$ in $\mathbb{R}^{2}$, since $0\in\mathrm{int}(\Sigma
_{Fl}(_{\mathbb{P}}D_{2}^{\hom}))$.
\end{example}

\section{Appendix}

This appendix presents the proof of Proposition \ref{Proposition_periodicODE}.
Assertion (i) follows from the variation-of-para\-meters formula and (ii) is a
consequence of (i).

(iii) The principal fundamental solutions $\Phi^{k}(t,s)$ satisfy for
$t,s\in\lbrack0,\tau_{0}+1]$%
\[
\left\Vert \Phi^{k}(t,s)\right\Vert \leq1+\left\vert \int_{s}^{t}\left\Vert
P^{k}(\sigma)\right\Vert \left\Vert \Phi^{k}(\sigma,s)\right\Vert
d\sigma\right\vert .
\]
By the generalized Gronwall inequality (cf. Amann \cite[Lemma 6.1]{Amann}) it
follows that $c_{1}=\sup_{k\in\mathbb{N},\,\,t,s\in\lbrack0,\tau_{0}%
+1]}\left\Vert \Phi^{k}(\,t,s)\right\Vert <\infty$. For all $\,t,s\in
\lbrack0,\tau_{0}+1]$%
\begin{align*}
&  \left\Vert \Phi^{k}(t,s)-\Phi^{0}(t,s)\right\Vert =\left\Vert \int_{s}%
^{t}\left[  P^{k}(\sigma)\Phi^{k}(\sigma,s)-P^{0}(\sigma)\Phi^{0}%
(\sigma,s)\right]  d\sigma\right\Vert \\
&  \leq\left\Vert \int_{s}^{\,t}\left[  P^{k}(\sigma)-P^{0}(\sigma)\right]
\Phi^{k}(\sigma,s)d\sigma\right\Vert +\left\vert \int_{s}^{\,t}\left\Vert
P^{0}(\sigma)\right\Vert \left\Vert \Phi^{k}(\sigma,s)-\Phi^{0}(\sigma
,s)\right\Vert d\sigma\right\vert .
\end{align*}
The first term is bounded by $c_{k}:=c_{1}\int_{0}^{\tau_{0}+1}\left\Vert
P^{k}(\sigma)-P^{0}(\sigma)\right\Vert d\sigma\rightarrow0$ for $k\rightarrow
\infty$. Again by \cite[Lemma 6.1]{Amann} it follows that for $\,t,s\in
\lbrack0,\tau_{0}+1]$%
\[
\left\Vert \Phi^{k}(t,s)-\Phi^{0}(t,s)\right\Vert \leq c_{k}+\left\vert
\int_{s}^{\,t}c_{k}\left\Vert P^{0}(r)\right\Vert \exp\left\vert \int
_{r}^{\,t}\left\Vert P^{0}(\sigma)\right\Vert d\sigma\right\vert dr\right\vert
.
\]
The right hand side converges to $0$ uniformly in $\,t,s$ for $k\rightarrow
\infty$ since $c_{k}\rightarrow0$.

(iv) The assumption implies that there are unique $\tau_{k}$-periodic
solutions given by%
\[
x^{k}:=(I-\Phi^{k}(\tau_{k},0))^{-1}\int_{0}^{\tau_{k}}\Phi^{k}(\tau
_{k},s)z^{k}(s)ds.
\]
Then $\left\Vert \int_{\tau_{0}}^{\tau_{k}}\Phi^{k}(\tau_{k},s)z^{k}%
(s)ds\right\Vert \rightarrow0$ since $\tau_{k}\rightarrow\tau_{0}$ and the
integrands are uniformly bounded, and
\begin{align*}
&  \left\Vert \int_{0}^{\tau_{0}}\left[  \Phi^{k}(\tau_{k},s)z^{k}(s)-\Phi
^{0}(\tau_{0},s)z^{0}(s)\right]  ds\right\Vert \\
&  \leq\left\Vert \int_{0}^{\tau_{0}}\left[  \Phi^{k}(\tau_{k},s)-\Phi
^{0}(\tau_{0},s)\right]  z^{k}(s)ds\right\Vert +\left\Vert \int_{0}^{\tau_{0}%
}\Phi^{0}(\tau_{0},s)\left[  z^{k}(s)-z^{0}(s)\right]  ds\right\Vert \\
&  \leq\left[  \sup_{s\in\lbrack0,\tau_{0}]}\left\Vert \Phi^{k}(\tau
_{k},s)-\Phi^{0}(\tau_{k},s)\right\Vert +\sup_{s\in\lbrack0,\tau_{0}%
]}\left\Vert \Phi^{0}(\tau_{k},s)-\Phi^{0}(\tau_{0},s)\right\Vert \right]
\int_{0}^{\tau_{0}}\left\Vert z^{k}(s)\right\Vert ds\\
&  \qquad+\sup_{s\in\lbrack0,\tau_{0}]}\left\Vert \Phi^{0}(\tau_{0}%
,s)\right\Vert \int_{0}^{\tau_{0}}\left\Vert z^{k}(s)-z^{0}(s)\right\Vert ds.
\end{align*}
This converges to $0$ and it follows that $x^{k}\rightarrow x^{0}$.

\textbf{Acknowledgements. }\ We thank Luiz A.B. San Martin and Adriano Da
Silva for valuable suggestions and comments. We are also indebted to anonymous
reviewers of an earlier version whose significant queries helped to improve
the paper.


\begin{thebibliography}{99}                                                                                               %


\bibitem {Amann}\textsc{H. Amann,} \emph{Ordinary Differential Equations. An
Introduction to Nonlinear Analysis, }De Gruyter\ 1990.

\bibitem {AyaDaS17}\textsc{V. Ayala and A. Da Silva, }\emph{Controllability of
linear control systems on Lie groups with semisimple finite center,} SIAM J.
Control Optim. 55(2) (2017), pp. 1332--1343.

\bibitem {BacV13}\textsc{A. Bacciotti and J.-C. Vivalda}, \emph{On radial and
directional controllability of bilinear systems}, Systems Control Lett., 62(7)
(2013), pp. 575-580.

\bibitem {BDK74}\textsc{C. Bruni, G. D. Pillo, and G. Koch,} \emph{Bilinear
systems: an appealing class of `nearly linear' systems in theory and
applications,}\ IEEE Trans. Automatic Control, vol. 19, no. 4, (1974), pp. 334-348.

\bibitem {Chic99}\textsc{C. Chicone,} \emph{Ordinary Differential Equations
with Applications,} Texts in Applied Mathematics, Vol. 34, Springer-Verlag 1999.

\bibitem {ColDu01}\textsc{F. Colonius and W. Du}, \emph{Hyperbolic control
sets and chain control sets}, J. Dynam. Control Systems 7(1) (2001), pp. 49-59.

\bibitem {ColK00}\textsc{F. Colonius and W. Kliemann,}\emph{ The Dynamics of
Control}, Birkh\"{a}user 2000.

\bibitem {ColK14}\textsc{F. Colonius and W. Kliemann,}\emph{ Dynamical Systems
and Linear Algebra, }Graduate Studies in Mathematics, Vol. 156, Amer. Math.
Soc. 2014.

\bibitem {ColRS}\textsc{F. Colonius, A.J. Santana and J. Setti,} \emph{Control
sets for bilinear and affine systems,} Mathematics of Control, Signals and
Systems 34 (2022), pp. 1-35.

\bibitem {DaSK16}\textsc{A. Da Silva and C. Kawan,} \emph{Invariance entropy
of hyperbolic control sets}. Discrete Contin. Dyn. Syst. 36(1) (2016), pp. 97-136.

\bibitem {DSC09}\textsc{O.G. Do Rocio, A.J. Santana, and M.A. Verdi,
}\emph{Semigroups of affine groups, controllability of affine systems and
affine bilinear systems in }$Sl(2,\mathbb{R})\rtimes\mathbb{R}^{2}$\emph{,
}SIAM J. Control Optim. 48 (2009), pp. 1080-1088.

\bibitem {Elliott}\textsc{D.L. Elliott,} \emph{Bilinear Control Systems,
Matrices in Action}, Kluwer Academic Publishers, 2008.

\bibitem {FidoR03}\textsc{M. Firer and O.G. Do Rocio,} \emph{Invariant control
sets on flag manifolds and ideal boundaries of symmetric spaces,} J. Lie
Theory, 13 (2003), pp. 463-477.

\bibitem {Jurd97}\textsc{V. Jurdjevic}, \emph{Geometric Control Theory},
Cambridge University Press, 1997.

\bibitem {JurS84}\textsc{V. Jurdjevic and G. Sallet}, \emph{Controllability
properties of affine systems,} SIAM J. Control Optim. 22(3) (1984), pp. 501-508.

\bibitem {Kawa13}\textsc{C. Kawan}, \emph{Invariance Entropy for Deterministic
Control Systems. An Introduction}, LNM Vol. 2089, Springer 2013.

\bibitem {Kawa16}\textsc{C. Kawan, }\emph{On the structure of uniformly
hyperbolic chain control sets, }Systems Control Lett. 90 (2016), pp. 71-75.

\bibitem {Mohler}\textsc{R.R. Mohler,} \emph{Bilinear Control Processes},
Academic Press, New York and London, 1973.

\bibitem {Nie20}\textsc{L. Nie,} \emph{Output-controllability and
output-near-controllability of driftless discrete-time bilinear systems,} SIAM
J. Control Optim., 58(4) (2020), pp. 2114-2142.

\bibitem {Perko}\textsc{L. Perko,} \emph{Differential Equations and Dynamical
Systems,} Springer, 3rd ed., 2001.

\bibitem {RinkM68}\textsc{R.E. Rink and R.R. Mohler,} \emph{Completely
controllable bilinear systems}, SIAM J. Control Optim. 6(3) (1968), pp. 477-
486\textsc{.}

\bibitem {Robin98}\textsc{C. Robinson,} \emph{Dynamical Systems:\ Stability,
Symbolic Dynamics, and Chaos,} Taylor \& Francis Inc., 2nd ed., 1998.

\bibitem {Son98}\textsc{E. Sontag}, \emph{Mathematical Control Theory},
Springer-Verlag 1998\emph{.}

\bibitem {Tes}\textsc{G. Teschl,} \emph{Ordinary Differential Equations and
Dynamical Systems,} Graduate Studies in Math. Vol. 149, Amer. Math. Soc., 2012.
\end{thebibliography}
\end{document}